\documentclass[11pt,reqno]{amsart}

\usepackage{amsmath,amsfonts,amsthm,color,amssymb,mathrsfs,mathabx,tikz}
\usepackage{xcolor}
\usepackage{dsfont}
\usepackage{stmaryrd} 
\SetSymbolFont{stmry}{bold}{U}{stmry}{m}{n}
\usepackage[all]{xy}
\usepackage[left=1in, right=1in, top=1.1in, bottom=1.1in]{geometry}
\usepackage{enumitem}
\usepackage{hyperref}
\usepackage{extarrows}

\hypersetup{
colorlinks   = true,
citecolor    = blue,
linkcolor=blue
}
 \usepackage{changepage}
\allowdisplaybreaks
\numberwithin{equation}{section}

\newtheorem{theorem}{Theorem}[section]
\newtheorem{lemma}[theorem]{Lemma}

\newtheorem{proposition}[theorem]{Proposition}

\newtheorem{definition}[theorem]{Definition}

\newtheorem{remark}[theorem]{Remark}
\newtheorem{notations}[theorem]{Notations}

\newenvironment{assumptionp}[1]{

\assumptionalt
}{\endassumptionalt}

\def\bZ{{\mathbb Z}}
\def\I{\mathbf{I}}
\def\bP{{\mathbb P}}
\def\rme{{\rme}}\def\al{{\alpha}}

\def\la{\left\langle}
\def\ra{\right\rangle}
\def\e{\varepsilon}

\def\bR{\mathbb R}
\def\bN{\mathbb N}
\def\bE{\mathbb E}\def\E{\mathbb E}

\def\bI{\mathbb I}

\def\cH{\mathcal H}

\def\wc{\overset{(d)}{\longrightarrow}}

\def\dd{{\mathrm d}}
\def\zf{Z_{N,\beta}^{\omega,h}}
\def\zc{Z_{N,\beta}^{\omega,h,\text{c}}}
\def\nzf{\tilde{Z}_{N,\beta}^{\omega,h}}

\newcounter{bean}
\newcommand{\benuma}{\setlength{\labelwidth}{.25in}

\begin{list}
{(\alph{bean})}{\usecounter{bean}}}
\newcommand{\eenuma}{\end{list}}

\begin{document}

\title[Disordered pinning model]{On a pinning model in correlated Gaussian random environments}

\author[Z. Li]{Zi'an Li}\address{Research Center for Mathematics and Interdisciplinary Sciences, Shandong University, Qingdao 266237, China}
\email{202117106@mail.sdu.edu.cn}

\author[J. Song]{Jian Song}\address{Research Center for Mathematics and Interdisciplinary Sciences, Shandong University, Qingdao 266237, China}
\email{txjsong@sdu.edu.cn}

\author[R. Wei]{Ran Wei}\address{Department of Financial and Actuarial Mathematics, Xi'an Jiaotong-Liverpool University, 111 Ren'ai Road, Suzhou 215123, China}
\email{ran.wei@xjtlu.edu.cn}

\author[H. Zhang]{Hang Zhang}\address{Research Center for Mathematics and Interdisciplinary Sciences, Shandong University, Qingdao 266237, China}
\email{202217118@mail.sdu.edu.cn}

\begin{abstract}
We consider a pinning model in correlated Gaussian random environments. For the model that is disorder relevant, we study its intermediate disorder regime and show that the rescaled partition functions converge to a non-trivial continuum limit in the Skorohod setting and in the Stratonovich setting, respectively. Our results partially confirm the Weinrib--Halperin prediction for disorder relevance/irrelevance.
\end{abstract}
\maketitle
\tableofcontents

\section{Introduction}

In this paper, we study a pinning model in Gaussian random environments (also called the \textit{disorder}), of which the correlation has a power-law decay.  We shall show that, under proper scalings, the rescaled partition functions converge weakly to continuum limits which can be represented by chaos expansions. Our results complement those of \cite{Ber13} and are consistent with the prediction of \cite{WH83} on the disorder relevance/irrelevance for physical systems with correlated disorders.

\subsection{The disordered pinning model}\label{sec1.1}

The disordered pinning model is a type of random polymer models, which is defined via a Gibbs transform of the distribution of a renewal process. Pinning models are well suited models for studying phase transitions and critical phenomena and also have received widespread attention in physics, chemistry and biology. We refer to the monographs of \cite{Gia07} and \cite{Gia11} for more details.

We now introduce the setting of our model. Let $\tau=\{\tau_{i}\text{,}~i\in\bN_{0}\}$ be a persistent  renewal process (i.e. $\bP_{\tau}(\tau_{1}<\infty)=1$) on $\bN$ with $\tau_{0}=0$ and the inter-arrival law
\begin{equation}\label{eq:pin_dist}
\bP_{\tau}(\tau_{1}=n)=\frac{L(n)}{n^{1+\al}},\quad\text{for}~n\geq1, 
\end{equation}where $\alpha>0$ and $L(\cdot): \bR^+\to\bR^+$ is a slowly varying function, i.e., $\lim_{t\to\infty}L(at)/L(t)=1$ for any $a>0$ (see \cite{BGT89} for more details about slowly varying functions).  The expectation with respect to $\bP_\tau$ is denoted by $\bE_\tau$. Note that $\bE_{\tau}[\tau_1]<\infty$ for $\alpha>1$ and $\bE_{\tau}[\tau_1]=\infty$ for $\alpha<1$, and by \cite[Proposition 1.5.9a]{BGT89}, for $\alpha=1$,
\begin{equation}\label{e:lN}
\bE_{\tau}[\tau_1\mathbf{1}_{\{\tau_1\leq N\}}]=l(N):=\sum_{n=1}^N\frac{L(n)}{n}
\end{equation}is also slowly varying (whether it is finite or not depends on $L(\cdot)$). Hence, the renewal process $\tau$ can be classified by the following conditions:
\vspace{-0.1cm}
\begin{equation}
\text{(1) $0<\al<1$;\quad(2) $\E_\tau[\tau_1]<\infty$;\quad(3) $\alpha=1$ and  $\E_\tau[\tau_1]=\infty$.}
\end{equation}

\vspace{-0.15cm}
\noindent Noting that our renewal process $\tau$ is \emph{aperiodic} since $L(n)>0$ for all $n$, we have the renewal theorem (see \cite{Don97} and \cite[Theorem 8.7.5]{BGT89}):
\vspace{-0.1cm}
\begin{eqnarray}\label{local}
\bP_\tau(n\in\tau)\sim
\begin{cases}
\dfrac{C_\alpha}{L(n)n^{1-\alpha}}, & \text{if }0<\alpha<1~\Big(C_{\alpha}=\frac{\alpha\sin(\pi\alpha)}{\pi}\Big),\\[10pt]
\dfrac{1}{\bE[\tau_{1}]}, & \text{if }\bE[\tau_{1}]<\infty,\\[10pt]
\dfrac{1}{l(n)}, & \text{if } \alpha=1 \text{ and } \E[\tau_1]=\infty,
\label{renewl theorem}
\end{cases}
\quad \qquad\text{as}~n\to\infty.
\end{eqnarray}

\vspace{-0.15cm}
Let $\omega=\{\omega_{n}\text{,}~n\in\bN\}$ be a family of Gaussian random variables independent of $\tau$, representing the random environment. The probability and expectation for $\omega$ are denoted by $\bP_\omega$ and $\bE_\omega$, respectively. We assume that
\vspace{-0.1cm}
\begin{equation}\label{eq:omega_dist}
\bE_\omega[\omega_n]=0\quad\text{and}\quad\bE_\omega[\omega_n\omega_{n'}]=\gamma(n-n'),\quad\text{for } n,n'\in \bN,
\end{equation}

\vspace{-0.15cm}
\noindent where the covariance function $\gamma(\cdot)$ satisfies
\vspace{-0.1cm}
\begin{equation}\label{con-gamma}
\gamma(n)\lesssim |n|^{2H-2}\wedge1, \text{ for } n\in\bZ~\text{(see Notations \ref{notations} for the meaning of ``$\lesssim$'').}
\end{equation}

\vspace{-0.15cm}
\noindent Here, $H\in(1/2,1)$ is a fixed constant (called the \textit{Hurst parameter}) throughout this paper. We further assume that for all $t\neq 0$,
\vspace{-0.1cm}
\begin{equation}\label{con-gamma'}
\lim\limits_{N\rightarrow\infty}N^{2-2H}\gamma(\left[Nt\right])=|t|^{2H-2}.
\end{equation}

\vspace{-0.15cm}
We shall use $\bP=\bP_\tau\otimes\bP_\omega$ and $\bE=\bE_\tau\otimes\bE_\omega$ on the product probability space of $(\tau,\omega)$. We may also use $\bP$ and $\bE$ to denote the probability and the expectation respectively of some random variables only involving $\tau$ or $\omega$ when there is no ambiguity.

Denote $\Omega_{N}:=\{\frac{1}{N}\text{,}\frac{2}{N}\text{,}\cdots\text{,}\frac{N-1}{N}\text{,}1\}$ and $\delta_x^{(N)}=\delta_{x}:=\mathbf{1}_{\{Nx\in\tau\}}$.  The disordered pinning model with \emph{free} boundary is defined via Gibbs transform by
\vspace{-0.1cm}
\begin{equation}\label{def:Gibbs}
\frac{\dd\bP_{N,\beta}^{\omega,h}}{\dd\bP_\tau}(\tau):=\frac{1}{\zf}\exp\Big\{\sum_{x\in\Omega_{N}}(\beta \omega_{Nx}+h)\delta_{x}\Big\},
\end{equation}

\vspace{-0.15cm}
\noindent where
\vspace{-0.1cm}
\begin{equation}\label{e:partition}
\zf=\bE_{\tau}\Big[\exp\Big\{\sum_{x\in\Omega_{N}}(\beta \omega_{Nx}+h)\delta_{x}\Big\}\Big]
\end{equation}

\vspace{-0.15cm}
\noindent is the \emph{partition function},   $\beta>0$ is the inverse temperature and $h\in\bR$ is an external field. One can also consider the model with conditioned boundary, of which the partition function is given by
\vspace{-0.1cm}
\begin{equation}\label{def:cond}
\zc=\bE_{\tau}\Big[\exp\Big\{\sum_{x\in\Omega_{N}}(\beta \omega_{Nx}+h)\delta_{x}\Big\}\Big|N\in\tau\Big].
\end{equation}

\subsection{Disorder relevance/irrelevance and the Weinrib--Halperin prediction}\label{sec12}

When the disorder is i.i.d., the \emph{Harris criterion} \cite{Harris74} indicates that disorder is \emph{irrelevant} if $\al<\frac{1}{2}$ and is \emph{relevant} if $\al>\frac{1}{2}$ (with $\alpha=\frac12$ being the marginal case). Here,  ``disorder is relevant''  means that for any $\beta>0$, the behavior of $\tau$ under $\bP_{N,\beta}^{\omega,h}$ differs drastically from that under  $\bP_\tau$. To be specific, some crucial quantities, such as the \emph{correlation length} and the \emph{critical point shift} will be changed by introducing disorder in the Gibbs transform \eqref{def:Gibbs}. When disorder is irrelevant, the behavior of $\tau$ under $\bP_{N,\beta}^{\omega,h}$ is comparable to that under $\bP_\tau$, at least for small enough $\beta>0$. Readers may refer to \cite{CSZ16,CSZ16+,BL18,CTT17} for more detailed discussion about disorder relevance and irrelevance.

When the disorder is correlated and the correlation is of the form $n^{-p}$ ($p > 0$ and $n$ is the distance between two points), \cite{WH83} predicted that the disorder should be
relevant if $\al>\min(p, 1)/2$ and irrelevant if $\al<\min(p, 1)/2$. \cite{Ber13} confirmed the Weinrib--Halperin prediction for the case $p>1$ (note that in this case the Harris criterion still applies), and showed that the case $p<1$ is different in the sense that classical methods (see, e.g., \cite{Gia11}) in the study of disorder relevance/irrelevance by comparing critical points/exponents no longer apply: when $p<1$, the disordered system does not possess a localization/delocalization phase transition, since the annealed free energy is always infinite and the quenched free energy is always strictly positive for all $h$ as long as $\beta>0$, and thus the system is always localized (see Theorem 2.5 and Section 2.3 in \cite{Ber13}). This phenomenon is due to the fact, as pointed out by \cite{Ber14},  that when $p<1$, there are large and frequent regions in the environments with arbitrarily high attractions (called the ``infinite disorder'' in \cite{Ber14}). These results indicate that different methods are needed to study  disorder relevance/irrelevance for the case $p<1$.

On the other hand, \cite{CSZ16} proposed a new perspective on disorder relevance/irrelevance: it is possible that if the strength of disorder is tuned down to zero at a proper rate (i.e., $\beta_N\to0$) as the system size tends to infinity, then the disorder will persist in the resulting continuum model. This phenomenon has been shown for the i.i.d.\ disorder by studying weak limits of rescaled partition functions for the directed polymer model in \cite{AKQ14} and for pinning models, directed polymer models and the Ising model in a unified way in \cite{CSZ16}. For related results on directed polymers,  we refer to \cite{rang20,  cg23} for the case when the disorder is correlated in space but independent in time, to \cite{rsw24} for the case when the disorder is correlated in time but independent in space, and to \cite{SSSX21} for the case when the disorder is space-time correlated. In those models,  the Weinrib--Halperin prediction is not clear (the decay rates in space and in time of the correlation of the disorder in \cite{SSSX21}  do not agree).

Inspired by the above-mentioned works, we study the weak convergence of the rescaled partition functions. In our setting, noting that $p=2-2H\in(0,1)$ with $H\in(\frac{1}{2},1)$, the Weinrib--Halperin prediction suggests that disorder  is relevant  if $\alpha>1-H$, which coincides with the condition for the existence of the weak limit for each chaos (see Proposition~\ref{prop:weak-con-Sk}). When $H\in(\frac12,1)$, under the stronger condition $\alpha\geq\frac12$,  we establish in Theorem~\ref{thm1} that the rescaled Wick-ordered partition functions converge weakly to an $L^2$ random variable. We further conjecture that for  $H\in(\frac12,1)$, these rescaled Wick-ordered partition functions converge weakly to an $L^1$ random variable under the Weinrib--Halperin  condition $\alpha>1-H$ (see Remark~\ref{rem:highlight} and Section~\ref{sec:discussion} for further details).

Finally, we make a few more remarks on disorder relevance/irrelevance for pinning models in correlated random environment. \cite{Poi12} showed that the Harris criterion still applies to the pinning model in random environments with \emph{finite range} correlation. When disorders have \emph{long range} correlation, \cite{BP15} confirmed the Weinrib--Halperin prediction for the case when the correlations are \emph{summable}, which extended the result in \cite{Ber13}, and \cite{Poi13+} obtained sharp asymptotics of the annealed free energy for doubly summable or exponentially decaying correlations. We also mention that \cite{CCP19} studied a disordered pinning model, where the random environment is given by another independent renewal process. In that case, the random environment is correlated and it is different from the more typical setting where the random environment is a family of random variables. They  found that in their model, the chaos expansion heuristics and the Weinrib--Halperin prediction provide different phase diagram.


\subsection{Main results}
 Rather than studying the Gibbs measure directly, which is highly involved, we focus on its partition function, which already encapsulates sufficient information about the model. Noting that
\vspace{-0.1cm}
\begin{equation*}
\bE_\omega\Big[\exp\Big\{\sum\limits_{x\in\Omega_N}\beta\omega_{Nx}\delta_x\Big\}\Big]=\exp\Big\{\frac12\mathrm{Var}_{\omega}\Big(\sum\limits_{x\in\Omega_N}\beta\omega_{Nx}\delta_x\Big)\Big\},
\end{equation*}

\vspace{-0.15cm}
\noindent we first consider the following \emph{Wick-ordered} partition function
\vspace{-0.1cm}
\begin{equation}\label{e:partition'}
\nzf:=
\bE_{\tau}\Big[\exp\Big\{ \sum_{x\in\Omega_{N}}(\beta \omega_{Nx}+h)\delta_{x} -\frac12 \mathrm{Var}_{\omega}\Big(\sum_{x\in\Omega_{N}}\beta \omega_{Nx}\delta_{x}\Big)\Big\}\Big],
\end{equation}

\vspace{-0.15cm}
\noindent where the term $\frac12\mathrm{Var}_\omega(\sum_{x\in\Omega_N}\beta\omega_{Nx}\delta_x)=\frac{1}{2}\beta^2\sum_{n,m=1}^N \gamma(n-m) \mathbf 1_{\{n,m\in \tau\}}$ is a direct normalization of the partition function (we call it \textit{Wick normalization}), which depends on the realization of $\tau$. We adopt this normalization because disorder is correlated, and thus a polynomial chaos expansion as has been done in \cite{AKQ14, CSZ16} cannot be employed for our pinning model. The normalization in the Wick-ordered partition function \eqref{e:partition'} results in a Wick exponential, which allows us to perform a Wick expansion (see Section \ref{sec:skor} and also the discussion in \cite{LMS24}). We point out that the Wick-ordered partition function~\eqref{e:partition'} reduces to \cite[equation (3.3)]{CSZ16} when the disorders are i.i.d., noting that $\frac12 \mathrm{Var}_{\omega}(\sum_{x\in\Omega_{N}}\beta \omega_{Nx}\delta_{x})=\frac{1}{2}\beta^2\sum_{n=1}^N \mathbf 1_{\{n\in\tau\}}$. After addressing the Wick-ordered partition function, we turn to the original partition function $Z_{N,\beta}^{\omega,h}$ given in \eqref{e:partition}, for which we carry out a Taylor expansion. The finite-order chaos in the Wick and Taylor expansions is connected by the Hu--Meyer formula (see Eq.~\eqref{H-M} in Section~\ref{sec:pre}).

Let $\hat{\beta }>0$ and $\hat{h }\in\bR$ be fixed constants throughout this paper.  Consider the following scalings:
\vspace{-0.1cm}
\begin{eqnarray}\label{scale:beta}
\beta_{N}=
\begin{cases}
\hat{\beta }\dfrac{L(N)}{N^{\alpha+H-1}}, & \text{if  }  0<\alpha<1,\\[10pt]
\hat{\beta }\dfrac{1}{N^{H}}, &\text{if  }\bE[\tau_{1}]<\infty,\\[10pt]
\hat\beta \dfrac{l(N)}{N^H}, &\text{if } \alpha=1 \text{ and } \E[\tau_1]=\infty,
\end{cases}
h_{N}=
\begin{cases}
\hat{h}\dfrac{L(N)}{N^{\alpha}},& \text{if } 0<\alpha<1,\\[10pt]
\hat{h}\dfrac{1}{N},&\text{if } \bE[\tau_{1}]<\infty,\\[10pt]
\hat h \dfrac{l(N)}{N}, &\text{if } \alpha=1 \text{ and } \E[\tau_1]=\infty,
\end{cases}
\end{eqnarray}

\vspace{-0.15cm}
\noindent where we recall that  $l(n)$  given in \eqref{e:lN} is a slowly varying function.

Let $ \phi_{k}(t_1,\dots, t_k)$ be a \emph{symmetric} function. For any  given $(t_1,\cdots,t_k)\in(0,1)^k$ with $t_i\neq t_j$ for any $i\neq j$ (if there exist $i\neq j$, s.t.\ $t_i= t_j$, we set $\phi_k=0$), there exists a unique permutation $\sigma$ on $\llbracket k\rrbracket$ such that $0<t_{\sigma(1)}<t_{\sigma(2)}<\cdots<t_{\sigma(k)}<1$. We define $\phi_k$ as follows:
\vspace{-0.1cm}
\begin{equation}\label{e:phi}
\begin{aligned}
\phi_{k}(t_{1},\dots,t_{k})
:=
\begin{cases}
\displaystyle\frac{C_{\alpha}^{k}}{ t_{\sigma(1)}^{1-\alpha}(t_{\sigma(2)}-t_{\sigma(1)})^{1-\alpha}\cdots(t_{\sigma(k)}-t_{\sigma(k-1)})^{1-\alpha}} , &\text{ if }   1-H<\alpha<1,\\[10pt]
\displaystyle\Big(\frac{1}{\E[\tau_1]}\Big)^{k},& \text{ if } \E[\tau_1]<\infty,\\[10pt]
\displaystyle 1, & \text{ if }  \alpha=1 \text{ and } \E[\tau_1]=\infty,
\end{cases}
\end{aligned}
\end{equation}

\vspace{-0.15cm}
\noindent where $C_{\alpha}$ is the same as that in \eqref{local}.

Now we present our main results for the Wick-ordered partition function $\nzf$  given in \eqref{e:partition'} and for the original partition function $\zf$ given in \eqref{e:partition}, respectively. The weak limits $\tilde Z_{\hat\beta, \hat h}$ in \eqref{e:Z1} and $Z_{\hat\beta, \hat h}$ in \eqref{e:Z2} involve multiple stochastic integrals in the sense of \emph{Skorohod} and \emph{Stratonovich} (see Section~\ref{sec:pre} for their definitions), respectively. Roughly speaking,  the multiple \emph{Skorohod} integrals in \eqref{e:Z1} (resp. the multiple \emph{Stratonovich} integrals in \eqref{e:Z2}) can be defined as the limit of Riemann sums, where the products are taken as Wick products (resp. ordinary products). Although multiple Skorohod and Stratonovich integrals are related through the Hu–Meyer formula \eqref{H-M}, it is not straightforward to establish a direct connection between $\tilde Z_{\hat \beta, \hat h}$ and $Z_{\hat \beta, \hat h}$.

\begin{theorem}\label{thm1}
Let the renewal process $\tau$ and the disorder $\omega$ be given as in Section \ref{sec1.1}. Let H be the parameter in \eqref{con-gamma}-\eqref{con-gamma'} with $H\in(\frac12, 1)$. Recall the Wick-ordered partition function $\tilde{Z}^{\omega,h}_{N,\beta}$ given in \eqref{e:partition'} and the scalings \eqref{scale:beta}. We have the following three cases for different $\alpha>0$ in \eqref{eq:pin_dist}.

\textbf{Case (1):} If $\alpha>\frac12$, then for any $\hat{\beta}>0$ and $\hat{h}\in\bR$, we have that
\vspace{-0.2cm}
\[\tilde{Z}^{\omega,h_N}_{N,\beta_N}\wc\tilde{Z}_{\hat{\beta },\hat{h }} ~,\text{ as } N\to\infty.\]

\vspace{-0.15cm}
\noindent Here $\tilde{Z}_{\hat{\beta },\hat{h }}$ is given by the  $L^2$-convergent series:
\vspace{-0.2cm}
\begin{equation}\label{e:Z1}
\tilde{Z}_{\hat{\beta },\hat{h }}:=1+\sum_{k=1}^{\infty}\frac{1}{k!}\int_{[0,1]^{k}}\phi_{k}(t_{1},\cdots,t_{k})\prod\limits_{i=1}^k\diamond\left(\hat{\beta }W(\dd t_{i})+\hat{h }\dd t_{i}\right),
\end{equation}

\vspace{-0.1cm}
\noindent where $\phi_k$ is given by \eqref{e:phi}, $\prod\diamond$ is the \emph{Wick} product, $\dot W$ is Gaussian noise with $\E[\dot W(t)\dot W(s)]=|t-s|^{2H-2}$ and  the stochastic integral is of \emph{Skorohod} type.

\textbf{Case (2):} If $\alpha=\frac12$ and we further assume that the slowly varying function $L(\cdot)$ in \eqref{eq:pin_dist} satisfies $0<\inf_{n\in\bN}L(n)\leq\sup_{n\in\bN}L(n)<+\infty$, then there exist some $\hat{\beta}_c,\hat{h}_c\in(0,+\infty)$, such that for $\hat{\beta}\in(0,\hat{\beta}_c)$ and $|\hat{h}|<\hat{h}_c$, we have the same weak convergence as in \textbf{Case (1)}, that is,
\vspace{-0.15cm}
\begin{equation*}
\tilde{Z}^{\omega,h_N}_{N,\beta_N}\wc\tilde{Z}_{\hat{\beta },\hat{h }} ~,\text{ as } N\to\infty,
\end{equation*}

\vspace{-0.15cm}
\noindent where $\tilde{Z}_{\hat{\beta},\hat{h}}$ is given by the $L^2$-convergent series \eqref{e:Z1}.

\textbf{Case (3):} If $\alpha<\frac12$, then for any $\hat{\beta}>0$ and $\hat{h}\in\bR$, we have 
\vspace{-0.15cm}$
\lim\limits_{N\to\infty}\bE\Big[\Big(\tilde{Z}^{\omega,h_N}_{N,\beta_N}\Big)^2\Big]=+\infty.$
\end{theorem}

\begin{remark}

Note that $\alpha = \tfrac{1}{2}$ is the threshold between the \emph{$L^2$ regime} and the \emph{non-$L^2$ regime}, which will be further discussed in Remark~\ref{remark:alpha}. The case $\alpha < \tfrac{1}{2}$ will be addressed in Remark~\ref{rem:highlight} below.
\end{remark}

\begin{theorem}\label{thm2}
Let the renewal process $\tau$ and the disorder $\omega$ be given as in Section \ref{sec1.1}. Let H and $\alpha$ be parameters satisfying $H\in(\frac12, 1)$ and $\alpha+H>\frac32$. Recalling the partition function $Z^{\omega,h}_{N,\beta}$ given in \eqref{e:partition} and the scalings \eqref{scale:beta}, we have that for any $\hat{\beta}>0$ and $\hat{h}\in\bR$,
\vspace{-0.1cm}
\[Z^{\omega,h_N}_{N,\beta_N}\wc Z_{\hat{\beta },\hat{h }} ~,~as~N\to\infty.\]

\vspace{-0.15cm}
\noindent Here $Z_{\hat{\beta },\hat{h }}$ is given by the $L^2$-convergent series:
\vspace{-0.1cm}
\begin{equation}\label{e:Z2}
Z_{\hat{\beta },\hat{h }}:=\sum_{k=0}^{\infty}\frac{1}{k!}\int_{[0,1]^{k}}\phi_{k}(t_{1},\cdots,t_{k})\prod_{i=1}^{k}\left(\hat{\beta }W(\dd t_{i})+\hat{h }\dd t_{i}\right),
\end{equation}

\vspace{-0.15cm}
\noindent where $\phi_k$ is given by \eqref{e:phi}, $\dot W$ is Gaussian noise with $\E[\dot W(t)\dot W(s)]=|t-s|^{2H-2}$, and  the stochastic integral is of \emph{Stratonovich} type.
\end{theorem}

\begin{remark}\label{rmk:A}
Assuming the conditions $H\in(\frac12, 1)$ and $\alpha+H>\frac32$ as in Theorem \ref{thm2}, the rescaled original partition function $Z^{\omega,h_N}_{N,\beta_N}$ is actually uniformly bounded in  $L^1$ (see  Section \ref{Lp_bound}), and thus no normalization is needed in this situation. 

In contrast, under the weaker conditions $H \in \bigl(\tfrac12,1\bigr)$ and $\alpha \geq \frac12$ assumed in Cases (1) and (2) in Theorem~\ref{thm1}, Wick normalization is employed to obtain weak convergence to a nontrivial random variable. In our setting with colored Gaussian noise, Wick normalization provides more convenience for computations than the “standard” normalization. Furthermore, since the polymer measure acts as a random measure of the underlying renewal process, it is natural to adopt Wick normalization so that the normalized partition function has a mean equal to the \emph{homogeneous} partition function (i.e., $Z_{N,\beta=0}^{\omega,h}$), which is $1$ when $h=0$. This viewpoint has also been adopted very recently in \cite{qrv} for the parabolic Anderson model on $\mathbb R^2$, where the classical Skorohod integral was extended to the $L^1$ setting and the global existence of an $L^1$ Skorohod solution of the stochastic heat equation with multiplicative planar white noise was proved.

For the directed polymer model in time-correlated random environment, the continuum Skorohod (resp. Stratonovich) partition function corresponds to the Skorohod (resp. Stratonovich) solution of the stochastic heat equation driven by time-colored Gaussian noise (see \cite{rsw24}).  However, the literature on SPDEs related to pinning models in random environments is very limited. A stochastic heat equation with multiplicative white noise, corresponding to the marginally relevant pinning model with $\alpha=\tfrac12$ in an independent random environment, was recently studied  in \cite{WY24}. In our forthcoming work \cite{LSWZ}, we investigate a class of fractional stochastic heat equations in the sense of Skorohod (resp. Stratonovich) associated with the Skorohod (resp. Stratonovich) continuum partition functions of random pinning models.
\end{remark}

\begin{remark}
The results in Theorems \ref{thm1} and \ref{thm2} also hold for the conditioned partition functions
\vspace{-0.1cm}
$$
\tilde{Z}_N^{\text{c}}=\tilde{Z}_{N,\beta_N}^{\omega,h_N,\text{c}}:=
\bE_{\tau}\Big[\exp\Big\{ \sum_{x\in\Omega_{N}}(\beta \omega_{Nx}+h)\delta_{x} -\frac12 \mathrm{Var}\Big[\sum_{x\in\Omega_{N}}(\beta \omega_{Nx})\delta_{x}\Big]\Big\}\Big| N\in\tau\Big],~\text{and}$$

\vspace{-0.3cm}
$$
{Z}_N^{\text{c}}={Z}_{N,\beta_N}^{\omega,h_N,\text{c}}:=
\bE_{\tau}\Big[\exp\Big\{ \sum_{x\in\Omega_{N}}(\beta \omega_{Nx}+h)\delta_{x} \Big\}\Big| N\in\tau\Big],$$

\vspace{-0.15cm}
\noindent with the integrand $\phi_k$ in \eqref{e:Z1} and \eqref{e:Z2} being replaced by a \emph{symmetric} function $\phi_k^c$  with the same symmetrization as for $\phi_k$, which for $0< t_1<\cdots<t_{k}<1$ is defined by
\vspace{-0.1cm}
\begin{equation*}
\begin{aligned}
\phi_{k}^c(t_{1},\dots,t_{k})
:=
\begin{cases}
\displaystyle\frac{C_{\alpha}^{k}}{t_{1}^{1-\alpha}(t_{2}-t_{1})^{1-\alpha}\cdots(t_{k}-t_{k-1})^{1-\alpha}(1-t_k)^{1-\alpha}} , & \text{ if } 1-H<\alpha<1,\\[10pt]
\displaystyle\Big(\frac{1}{\E[\tau_1]}\Big)^{k},& \text{ if } \E[\tau_1]<\infty,\\[10pt]
\displaystyle 1, & \text{ if }  \alpha=1 \text{ and } \E[\tau_1]=\infty.
\end{cases}
\end{aligned}
\end{equation*}
\end{remark}

\vspace{-0.3cm}
\begin{remark}\label{rem:highlight}
A natural condition for the rescaled Wick-ordered partition  function to converge weakly is $\alpha+H-1>0$, where $\alpha+H-1$ is the scaling exponent for $\beta$ (see \eqref{scale:beta}). Note that for directed polymers, under the condition that the corresponding scaling exponent is positive, the continuum partition  converges in $L^2$ as a Wiener chaos (see \cite{CSZ16, cg23, rsw24}). However, for our pinning model, assuming $\alpha+H-1>0$ we can only prove that each chaos is a well-defined Skorohod integral which is $L^2$-integrable (see Lemma~\ref{lem:norm_degenerate_trace}), and we need a stronger condition $H\in(\frac12,1),\alpha\geq\frac12$ to guarantee the $L^2$-convergence as stated in Theorem~\ref{thm1}.  On the other hand,  when $H\in(\frac12,1)$, we show in Proposition \ref{prop:UI} that, assuming $\alpha>2-2H$,   $\big\{\tilde Z_{N,\beta_N}^{\omega, h_N}\big\}_N$ is uniformly integrable. Note that in this situation, when $H\in(\frac34,1)$,  $\alpha$ may be strictly less than $\frac12$, and thus $\big\{\tilde Z_{N,\beta_N}^{\omega, h_N}\big\}_N$ is not $L^2$-bounded.

This is a new phenomenon that does not appear in the models studied in  \cite{AKQ14,CSZ16,rang20,rsw24,cg23}: in those models, the rescaled partition functions are always uniformly bounded in $L^2$ if the models are in the disorder relevant regime, no matter whether the random environments are independent or correlated. This hints that the pinning model in correlated environment indeed carries some new features.

We note that a phase transition from the $L^2$ regime to an $L^p$ regime with $1<p<2$ has been studied in the directed polymer model with fixed inverse temperature $\beta$. In particular, \cite{J25} showed that, while the polymer measure exhibits the same macroscopic behavior in both the $L^2$ and $L^p$ regimes, its microscopic behavior differs, as evidenced by distinct exponents in the corresponding  local limit theorems. By analogy, we expect that this new phenomenon of the gap between the $L^2$ regime and  the $L^1$ regime in our model should  relate to some  phase transition.

Motivated by the Weinrib--Halperin prediction and the above discussion, we conjecture that under the condition $\alpha+H-1>0$ with $\alpha\leq\frac12$, the rescaled Wick-ordered partition  function $\tilde Z_{N,\beta_N}^{\omega, h_N}$ converges weakly to some $L^1$-integrable random variable  for any $\hat{\beta}>0$.  We will discuss the conjecture in more detail in Section~\ref{sec:discussion}.
\end{remark}

To close this section, let us highlight our contributions in this paper. Firstly, we obtain some finer property of a disordered pinning model with environment correlation of the form $n^{-p}$, where $p<1$, which complements the result of \cite{Ber13}. To be specific, we find scaling limits of rescaled (Wick-ordered) partition functions in the disorder relevant regime. In particular, the Weinrib--Halperin prediction is partially confirmed when it differs from the Harris criterion. We also show that  the rescaled  (Wick-ordered) partition functions are not $L^2$-bounded in the entire disorder relevance regime (see Remark \ref{remark:alpha}), which is a new feature compared to other disordered models. Secondly, we develop a decoupling approach to prove uniform integrability (see Section~\ref{sec:discussion}),  and provide some sharp upper bounds for the norm of traces (see Lemma \ref{lem:Tr^j}) and for the second moments of multiple Stratonovich integrals  (see Lemma  \ref{Tr}). These  auxiliary results may be of independent interest.

\begin{notations}\label{notations}
Let $\bN $ denote the set of natural numbers without 0, i.e., $\bN:=\{1,2,\cdots\}$ and $\bN_{0}:=\{0,1,2,\cdots\} $; for $N\in\bN$, $\llbracket N \rrbracket:=\{1,2,\cdots,N\}$;  For $a\in\bR$, $\left[a\right]$means the greatest integer that is not greater than $a$; $\|\cdot\|$ is used for the Euclidean norm; let\ $\boldsymbol{k}:=(k_{1},k_{2},\cdots,k_{d}), \boldsymbol{x}:=(x_{1},x_{2},\cdots,x_{d})$ etc.\ stand for vectors in $\bZ^{d}$ or $\bR^{d}$ depending on the context; we use C to denote a generic positive constant that may vary in different lines; we say $f(x)\lesssim g(x)$ if $f(x)\leq Cg(x)$ for all $x$; we write $f(x)\sim g(x) (as\  x\rightarrow\infty) $ if $\lim_{x\rightarrow\infty}f(x)/g(x)=1$; we use $\overset{(d)}{\longrightarrow}$ to denote the convergence in distribution (also called the \emph{weak convergence}) for random variables or random vectors. For a random variable X, $\|X\|_{L^{p}}:=(\mathrm{E}\left[|X|^{p}\right])^{1/p}$ for $p\geq 1$. We use $\mathrm{I}_A$ to denote the indicator function, i.e., $\mathrm{I}_{A}(\boldsymbol{t})=1$ if $\boldsymbol{t}\in A$ and $\mathrm{I}_{A}(\boldsymbol{t})=0$ if $\boldsymbol{t}\notin A$.
\end{notations}

The rest of the paper is organized as follows. In Section \ref{sec:pre}, we introduce some preliminaries. The proofs of Theorem \ref{thm1} and Theorem \ref{thm2} will be presented in Section \ref{sec:skor} and Section \ref{sec:Str}, respectively. We will only deal with the case $0<\alpha<1$ in the proofs, noting that the other case $\alpha\geq1$ can be treated in the same way. Some technical estimates are put in Appendix \ref{app:A}.

\section{Preliminaries on Gaussian spaces and U-statistics}\label{sec:pre}

\subsection{Gaussian spaces}\label{sec:Gaussian}

In this subsection, we collect some preliminaries on the analysis of Gaussian space. We refer to \cite{Nua06, Hu16} for more details. 

On a probability space $(\Omega,\mathcal{F},\bP)$ satisfying the usual conditions, let $ \dot{W}=\{\dot{W}(t):t\in[0,1]\}$ be a real-valued centered Gaussian noise with covariance
\vspace{-0.1cm}
\begin{equation}\label{def:noise}
\E[\dot W(t)\dot W(s)]=K(t-s)=|t-s|^{2H-2},
\end{equation}

\vspace{-0.15cm}
\noindent where $H\in(\frac12,1)$ is the Hurst parameter appearing in \eqref{con-gamma} and \eqref{con-gamma'}. Thus, $\dot{W}$ is the continuum counterpart of the disorder $\omega$ introduced in Section \ref{sec1.1}.

The Hilbert space $\mathcal{H}$ associated with $\dot{W}$ is the completion of smooth functions with compact support under the inner product:
\vspace{-0.1cm}
\begin{equation}\label{def:H_inner}
\begin{aligned}
\left<f,g\right>_{\mathcal{H}}&=\int_{0}^{1}\int_{0}^{1}f(s)K(s-t)g(t)\dd s\dd t=\int_{0}^{1}\int_{0}^{1}f(s)g(t)|s-t|^{2H-2}\dd s\dd t.
\end{aligned}
\end{equation}

\vspace{-0.15cm}
\noindent Noting that $\mathcal{H}$ contains distributions (see \cite{pt00}),  it is convenient to consider  $|\cH|$ which is a linear subspace of measurable functions $f$ on $[0,1]$ satisfying
\vspace{-0.1cm}
\begin{equation}\label{def:B}
\|f\|_{|\cH|}^2:=\int_{0}^{1}\int_{0}^{1}|f(s) f(t)||s-t|^{2H-2}\dd s\dd t<\infty.
\end{equation}

\vspace{-0.15cm}
\noindent Clearly, $|\cH|$ is a Banach space and is dense in $\cH$.
\vspace{-0.2cm}
 
\begin{remark}
The inner product defined in \eqref{def:H_inner} is indeed positive definite since for $H\in(1/2,1)$,
    \vspace{-0.1cm}
    \begin{equation*}
        \begin{aligned}
\left<f,f\right>_{\mathcal{H}}&=\int_{0}^{1}\int_{0}^{1}f(s)f(t)|s-t|^{2H-2}\dd s\dd t\\&=\int_{0}^{1}\int_{0}^{1}f(s)f(t)\left(C_H\int_{\bR}|s-r|^{H-\frac{3}{2}}|t-r|^{H-\frac{3}{2}}\dd r\right)\dd s\dd t\\&=C_H\int_{\bR}\left(\int_{0}^{1}f(s)|s-r|^{H-\frac{3}{2}}\dd s\right)^2\dd r\ge 0.
        \end{aligned}
    \end{equation*}
    Also note that 
    $$\|f\|_{L^1(\bR)}^2\leq \int_{0}^{1}\int_{0}^{1}|f(s) f(t)||s-t|^{2H-2}\dd s\dd t\leq C_H\|f\|_{L^{1/H}(\bR)}^2, $$
   where the second inequality is due to Lemma \ref{lem:1/H norm}. Hence, we have $L^{1/H}\subset|\cH|\subset L^1$. For more details, we refer to Section 5.1.3 in \cite{Nua06}.
\end{remark}

Let $\{W(f),f \in \cH \}$ be an isonormal Gaussian process with covariance $\E[W(f) W(g)]=\la f, g\ra_\cH.$  The \emph{Wiener integral} $W(f)$ is often denoted in the integral form:
\vspace{-0.1cm}
\[
\int_0^1f(s)W(\dd s):= W(f). 
\]

\vspace{-0.15cm}
\noindent For $n\in \bN_0$, the $n$-th {\it Hermite polynomial} is given by
\vspace{-0.1cm}
\begin{equation*}
H_n(x):= (-1)^n\mathrm{e}^{x^2/ 2}\frac{\dd^n}{\dd x^n}\mathrm{e}^{-x^2/ 2}, \, x\in \bR.
\end{equation*}

\vspace{-0.15cm}
\noindent For  $f\in \cH$ with $\|f\|_{\cH}=1$,  the \emph{multiple Wiener integral} of $f^{\otimes n}$ is defined by  
\vspace{-0.1cm}
\begin{equation}\label{Wiener}
    \I_n(f^{\otimes n}):=H_n(W(f)),
\end{equation}
 where $f^{\otimes n}$ is a function on $[0,1]^n$ defined by $f^{\otimes n}(t_1,\cdots,t_n):=\prod_{k=1}^n f(t_k)$.

As in \eqref{def:H_inner} and \eqref{def:B}, let the Hilbert space ${\cH}^{ \otimes k}$ be the completion of smooth functions with compact support under the inner product:
\vspace{-0.1cm}
\begin{equation}\label{def:H_dim_inner}
\begin{aligned}
\left<f,g\right>_{\mathcal{H}^{\otimes k}}&:=\int_{[0,1]^{2k}}f(s_1,\cdots,s_k)g(t_1,\cdots,t_k)\prod_{i=1}^k|s_i-t_i|^{2H-2}\dd \boldsymbol{s}\dd \boldsymbol{t},
\end{aligned}
\end{equation}
and we also denote by $|\cH|^{\otimes k}$ the linear subspace of measurable functions $f$ on $[0,1]^k$ satisfying
\vspace{-0.1cm}
\begin{equation}\label{def:B_HD}
\|f\|_{|\cH|^{\otimes k}}^2:=\int_{[0,1]^{2k}}|f(s_1,\cdots,s_k)g(t_1,\cdots,t_k)|\prod_{i=1}^k|s_i-t_i|^{2H-2}\dd \boldsymbol{s}\dd \boldsymbol{t}<\infty.
\end{equation}

\vspace{-0.15cm}
\noindent Let ${\cH}^{\hat \otimes k}$ be the subspace of ${\cH}^{\otimes k}$ containing only symmetric functions. For  $f\in {\cH}^{\hat\otimes k}$,  the $k$-th \emph{multiple Wiener integral} $\I_k(f)$ can be defined by \eqref{Wiener} via a limiting argument, and moreover,
\vspace{-0.1cm}
$$
{\mathbb E}[\I_m(f)\I_n(g)]=n!\langle f,g\rangle_{\cH^{\otimes n}}\mathbf1_{\{m=n\}}, ~ {\rm for }~ f \in {\cH}^{\hat\otimes m}, g\in {\cH}^{\hat\otimes n}.
$$

\vspace{-0.15cm}
\noindent For general $f\in {\cH}^{\otimes k}$, we simply set $\I_k(f):= \I_k(\hat f),$ where $\hat  f$ denotes the symmetrization of $f$.  We also use the following integral form for multiple Wiener integrals:
\vspace{-0.1cm}
\begin{equation*}
\int_{[0,1]^n}f({\mathbf t}) W(\dd {t_1})\diamond\cdots \diamond W(\dd {t_n})=\int_{[0,1]^n}f({\mathbf t})\prod_{i=1}^n \diamond W(\dd {t_i}):=    \I_n(f).
\end{equation*}

\vspace{-0.15cm}
For  $f\in \cH^{\otimes n}$ and $g\in \cH^{\otimes m}$, the  \emph{Wick product} of $\I_n(f)$ and $\I_m(g)$ is defined by
\vspace{-0.1cm}
\begin{equation}\label{e:wick}
    \I_n(f) \diamond \I_m(g) := \I_{n+m}(f\otimes g). 
\end{equation}

\vspace{-0.15cm}
\noindent For square integrable  random variables $F$ and $G$ with \emph{Wiener chaos} expansions $F=\sum_{k=0}^\infty \I_k(f_k)$ and $G=\sum_{k=0}^\infty \I_k(g_k)$ respectively, we have $F\diamond G=\sum_{n=0}^\infty \sum_{m=0}^\infty \I_{n+m}(f_n \otimes g_m)$, 
as long as  the right-hand side term  is well defined (i.e., the series converges in $L^2$). For a centered Gaussian random variable $F$, its \emph{Wick exponential} is defined by
\vspace{-0.1cm}
\begin{equation}\label{e:wicke}
\exp^\diamond(F):=\exp\left(F-\frac{\bE[F^2]}{2}\right)=\sum_{m=0}^\infty\frac{1}{m!}F^{\diamond m}.
\end{equation}

\vspace{-0.15cm}
The \emph{multiple Stratonovich
 integral} of $g^{\otimes n}$ for $g\in \cH$ is defined by $ \mathbb I_n(g^{\otimes n}):=W(g)^n,$
and $\mathbb I_n(g)$ for $g\in \cH^{\hat{\otimes}  n}$ by  taking limits for linear combinations (see \cite{Hu16} for more details).  We also take the following notation for $\mathbb I_n(f)$
\vspace{-0.1cm}
\begin{align*}
\int_{[0,1]^n}f({\mathbf t}) W(\dd {t_1})\cdots W(\dd {t_n})
=\int_{[0,1]^n}f({\mathbf t})\prod_{i=1}^n W(\dd {t_i}):=\mathbb I_n(f). 
\end{align*}

\vspace{-0.15cm}
\noindent For $f\in\cH^{\otimes n}$ and $k=0,1,\cdots,\lfloor\frac{n}{2}\rfloor$, define the $k$-th order \emph{trace} $\mathrm{Tr}^k f$ of $f$ by
\vspace{-0.1cm}
\begin{equation}
\begin{aligned}
\mathrm{Tr}^k f(t_1,...,t_{n-2k})&\overset{\text{def}}{=}\int_{[0,1]^{2k}}f(s_1,...,s_{2k},t_1,...,t_{n-2k})\prod\limits_{i=1}^k K(s_{2i}-s_{2i-1})\dd s_{2i-1}\dd s_{2i},
\label{e:trace}
\end{aligned}
\end{equation}

\vspace{-0.15cm}
\noindent where $K(x-y)$ is given in \eqref{def:noise}. Multiple Stratonovich integrals $\mathbb{I}_m(f)$  and multiple Wiener integrals $\I_m(f)$ are related via the following celebrated  Hu--Meyer formula:
\vspace{-0.1cm}
\begin{equation}
\begin{aligned}
\mathbb{I}_n(f)=\sum_{k=0}^{[\frac{n}{2}]}\frac{n!}{k!(n-2k)!2^k}\I_{n-2k}(\mathrm{Tr} ^k \hat{f}),\quad\text{for}~f \in \cH^{\otimes n},
\label{H-M}
\end{aligned}
\end{equation}

\vspace{-0.15cm}
\noindent as long as $\interleave f\interleave_n<\infty$, where
\vspace{-0.1cm}
\begin{equation}
\begin{aligned}
\interleave f\interleave^2_{n}:=\mathbb{E}[|\mathbb{I}_n(f)|^2]=\sum_{k=0}^{[\frac{n}{2}]}\frac{1}{(n-2k)!}\left(\frac{n!}{k!2^k}\right)^2 \Vert \mathrm{Tr} ^k \hat{f}\Vert^2_{\cH^{\otimes (n-2k)}}.
\label{def-sm}
\end{aligned}
\end{equation}

\subsection{U-statistics}

\begin{definition}
We denote
\vspace{-0.1cm}
$$\mathcal{R}_{N}^{m}:=\Big\{\Big(\frac{\boldsymbol{i}-\vec{1}_{m}}{N},\frac{\boldsymbol{i}}{N}\Big]:\boldsymbol{i}\in\llbracket N \rrbracket^{m},\vec{1}_{m} \text{ is  the $m$-dimensional vector of all ones}\Big\}.$$

\vspace{-0.15cm}
\noindent Given $f\in|\cH|^{\otimes m}$, we denote by $\mathcal A_N(f)$ its conditional expectation with respect to the $\sigma$-field generated by $\mathcal R_N^m$, i.e., 
\vspace{-0.1cm}
\begin{equation}\label{e:AN}
\mathcal{A}_{N}(f)(\boldsymbol{t})=\sum_{B\in\mathcal{R}_{N}^{m}}\frac{1}{|B|}\int_{B}f(\boldsymbol{s})d\boldsymbol{s}\cdot \mathrm{I}_{B}(\boldsymbol{t}),
\end{equation}

\vspace{-0.15cm}
\noindent where $|B|$ is the Lebesgue measure of B.
\end{definition}
We introduce the \emph{U-statistics} for our model, which appears in the chaos expansion for the Wick-ordered partition function \eqref{e:partition'}. 
\begin{definition} \label{def:Im^N}
For $f\in|\cH|^{\otimes m}$, let
\vspace{-0.1cm}
\begin{equation}\label{e:I(f)}
\begin{aligned}
\I_{m}^{(N)}(f)&:=N^{-mH}\sum_{\boldsymbol{n}\in\llbracket N \rrbracket^{m}}\mathcal{A}_{N}(f)\left(\frac{n_{1}}{N},\cdots,\frac{n_{m}}{N}\right)\omega_{n_{1}}\diamond\cdots\diamond\omega_{n_{m}}=:N^{-mH}\sum_{\boldsymbol{n}\in\llbracket N \rrbracket^{m}}\mathcal{A}_{N}(f)(\boldsymbol{t})\,\omega_{\boldsymbol{n}}^\diamond,
\end{aligned}
\end{equation}

\vspace{-0.15cm}
\noindent where we denote
\vspace{-0.1cm}
\begin{equation}\label{e:wt}
\omega_{\boldsymbol{n}}^\diamond:=\omega_{n_{1}}\diamond\cdots\diamond\omega_{n_{m}} \quad\text{ and }\quad \boldsymbol{t}=(t_{1}\cdots t_{m}):=\frac{\boldsymbol{n}}{N}=\left(\frac{n_{1}}{N},\cdots,\frac{n_{m}}{N}\right).
\end{equation}
\end{definition}

\vspace{-0.3cm}
\begin{lemma}\label{lem:jensen}
For $m,N\in\bN $ and $f\in|\cH|^{\otimes m}$, let $\mathcal{A}_{N}(f)$ be defined as above. Then we have,
\vspace{-0.1cm}
$$\|\mathcal{A}_{N}(f)\|_{|\cH|^{\otimes m}}\leq C^{m}\|f\|_{|\cH|^{\otimes m}},$$

\vspace{-0.15cm}
\noindent where C is a constant depending only on H.
\end{lemma}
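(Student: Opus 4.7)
The plan is to reduce the estimate to nonnegative $f$, factor $\mathcal{A}_N$ as a product of one-dimensional conditional expectations acting coordinate-wise, and then prove a one-dimensional bilinear kernel estimate by a direct case analysis on the relative position of the blocks.

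First I would reduce to the case $f \geq 0$. Since $\mathcal{A}_N$ is a conditional expectation, Jensen's inequality applied block-by-block yields $|\mathcal{A}_N(f)(\mathbf{t})| \leq \mathcal{A}_N(|f|)(\mathbf{t})$ pointwise; as the norm \eqref{def:B} depends only on $|\cdot|$, this reduces the lemma to $f \geq 0$, for which $\mathcal{A}_N(f) \geq 0$ as well. Next, using $\mathcal{R}_N^m = (\mathcal{R}_N^1)^m$, I would factor $\mathcal{A}_N = T_1 \circ T_2 \circ \cdots \circ T_m$, where $T_i$ is the one-dimensional conditional expectation acting only on the $i$-th coordinate (that these commute and together equal $\mathcal{A}_N$ is an immediate consequence of Fubini). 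These operators preserve nonnegativity, so the goal reduces to showing that each $T_i$ is bounded by $\sqrt{C_0}$ on the positive cone of $|\cH|^{\otimes m}$ for some $C_0 = C_0(H)$; iterating $m$ times then yields $\|\mathcal{A}_N f\|_{|\cH|^{\otimes m}} \leq C_0^{m/2} \|f\|_{|\cH|^{\otimes m}}$, which is the claim.

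The core of the proof is the following one-dimensional bilinear estimate: for nonnegative $f, g$ on $[0, 1]$,
\begin{equation*}
\sum_{I, J \in \mathcal{R}_N^1} \bar f_I\, \bar g_J \int_I \int_J |s - t|^{2H - 2} \dd s\, \dd t \leq C_0 \int_0^1 \int_0^1 f(u)\, g(v)\, |u - v|^{2H - 2} \dd u\, \dd v,
\end{equation*}
where $\bar f_I := |I|^{-1} \int_I f$. Once this is available, Fubini applied to $T_i$ (freezing the other coordinates and applying the bilinear estimate to the one-dimensional slices $u \mapsto g(\ldots, u, \ldots)$ and $v \mapsto g(\ldots, v, \ldots)$, which are generally different functions because the frozen coordinates differ) gives $\|T_i g\|_{|\cH|^{\otimes m}}^2 \leq C_0 \|g\|_{|\cH|^{\otimes m}}^2$, completing the reduction. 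To prove the bilinear estimate, I split the sum by the distance between $I = [(i-1)/N, i/N]$ and $J = [(j-1)/N, j/N]$. When $|i - j| \geq 2$, the kernel $|s - t|^{2H - 2}$ varies by at most a bounded factor over $I \times J$ (the endpoints of the admissible range of $|s-t|$ are comparable), so the contribution of the pair $(I, J)$ is controlled termwise by the corresponding piece of the continuous integral. When $|i - j| \leq 1$, a direct computation yields $\int_I \int_J |s - t|^{2H - 2} \dd s\, \dd t \lesssim N^{-2H}$, while $|u - v| \leq 2/N$ together with $2H - 2 < 0$ forces the matching lower bound $|u - v|^{2H - 2} \geq c N^{2 - 2H}$ on $I \times J$; both sides of the bilinear inequality are then comparable to $N^{2 - 2H} (\int_I f)(\int_J g)$, so the constant $C_0$ absorbs the ratio.

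The main obstacle is precisely this diagonal and near-diagonal regime, where the averaged kernel on blocks of size $1/N$ and the singular pointwise kernel differ substantially and a naive pointwise domination of the former by the latter fails. The saving fact is that when $|i - j| \leq 1$, the smallness of $|u - v|$ produces a matching lower bound on the pointwise kernel that exactly balances the averaged one; this works only because $H \in (1/2, 1)$, so $2H - 2 \in (-1, 0)$ and the kernel, while singular, is locally integrable and has the homogeneity needed for the comparison.
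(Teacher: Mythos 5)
Your proof is correct. The paper itself does not prove this lemma: it simply cites Lemma C.1 of \cite{rsw24}, so your argument is a self-contained replacement for that external reference rather than a variant of an in-paper proof. The structure is sound: the reduction to $f\geq 0$ via Jensen block-by-block is legitimate because the $|\cH|^{\otimes m}$-norm only sees $|f|$; the factorization $\mathcal A_N=T_1\circ\cdots\circ T_m$ is valid since $\mathcal R_N^m$ is a product partition; and the one-dimensional bilinear estimate is proved correctly in both regimes. In the off-diagonal case $|i-j|\geq 2$ the kernel ratio over $I\times J$ is at most $\left(\frac{|i-j|+1}{|i-j|-1}\right)^{2-2H}\leq 3$, giving termwise domination, and in the near-diagonal case the computation $\int_I\int_J|s-t|^{2H-2}\,\dd s\,\dd t\leq C_HN^{-2H}$ combined with $|u-v|^{2H-2}\geq 2^{2H-2}N^{2-2H}$ on $I\times J$ closes the estimate (your phrase ``both sides are comparable'' slightly overstates matters --- the continuous integral is only bounded \emph{below} by $N^{2-2H}(\int_I f)(\int_J g)$, but that one-sided bound is all you need). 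You also correctly flag the only genuinely delicate point in the tensorized step, namely that the two one-dimensional slices fed into the bilinear estimate have different frozen coordinates and are therefore different functions; since your bilinear estimate is stated for an arbitrary pair of nonnegative functions, this causes no difficulty. The constant you obtain is $C_0^{m/2}$ with $C_0$ depending only on $H$, which matches the claimed $C^m$.
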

\begin{proof}
The desired result  follows directly from \cite[Lemma C.1]{rsw24}.
\end{proof}

\begin{lemma}\label{lem:isometry}
There exists a constant $C$ depending only on $H$  such that for all $f\in|\cH|^{\otimes m}$ and $N\in\bN$,
\vspace{-0.1cm}
$$\bE\Big[\Big(\I_{m}^{(N)}(f)\Big)^{2}\Big]\leq C^{m}m!\|\hat{f}\|_{|\cH|^{\otimes m}}^{2},$$

\vspace{-0.15cm}
\noindent where $\hat{f}$ is the symmetrization of f.
\end{lemma}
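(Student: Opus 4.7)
The plan is to compute $\E[(\I_m^{(N)}(f))^2]$ explicitly via the orthogonality properties of Wick products, then compare the resulting discrete sum against the Hilbert norm $\|\mathcal{A}_N(\hat f)\|_{|\cH|^{\otimes m}}^2$, and finally invoke Lemma \ref{lem:jensen} to bound by $\|\hat f\|_{|\cH|^{\otimes m}}^2$.

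First I would expand the square and use the following classical Wick identity for mean-zero Gaussians: for any $\mathbf n,\mathbf n'\in\llbracket N\rrbracket^m$,
\begin{equation*}
\E\bigl[\omega_{\mathbf n}^\diamond\,\omega_{\mathbf n'}^\diamond\bigr]=\sum_{\sigma\in S_m}\prod_{i=1}^m\gamma(n_i-n'_{\sigma(i)}).
\end{equation*}
Plugging this into the definition of $\I_m^{(N)}(f)$ and relabeling $\mathbf n'\mapsto\sigma^{-1}\mathbf n'$ inside each permutation to absorb $\sigma$ into $\mathcal A_N(f)$, I would symmetrize one of the two factors and use that $\mathcal A_N$ commutes with permutation of coordinates (so $\widehat{\mathcal A_N(f)}=\mathcal A_N(\hat f)$) to obtain
\begin{equation*}
\E\bigl[(\I_m^{(N)}(f))^2\bigr]=m!\,N^{-2mH}\sum_{\mathbf n,\mathbf n'\in\llbracket N\rrbracket^m}\mathcal A_N(\hat f)(\mathbf n/N)\,\mathcal A_N(\hat f)(\mathbf n'/N)\prod_{i=1}^m\gamma(n_i-n_i').
\end{equation*}

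Next I would apply the covariance bound \eqref{con-gamma}, $\gamma(n)\lesssim|n|^{2H-2}\wedge 1$, to get
\begin{equation*}
\E\bigl[(\I_m^{(N)}(f))^2\bigr]\leq C^m m!\,N^{-2mH}\sum_{\mathbf n,\mathbf n'}|\mathcal A_N(\hat f)(\mathbf n/N)|\,|\mathcal A_N(\hat f)(\mathbf n'/N)|\prod_{i=1}^m\bigl((|n_i-n_i'|\vee 1)^{2H-2}\bigr).
\end{equation*}
The core comparison step is then to recognize this sum as (up to a constant $C^m$) a lower bound for $\|\mathcal A_N(\hat f)\|_{|\cH|^{\otimes m}}^2$. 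Since $\mathcal A_N(\hat f)$ is constant on each box in $\mathcal R_N^m$,
\begin{equation*}
\|\mathcal A_N(\hat f)\|_{|\cH|^{\otimes m}}^2=\sum_{\mathbf n,\mathbf n'}|\mathcal A_N(\hat f)(\mathbf n/N)||\mathcal A_N(\hat f)(\mathbf n'/N)|\prod_{i=1}^m\int_{\frac{n_i-1}{N}}^{\frac{n_i}{N}}\!\!\int_{\frac{n'_i-1}{N}}^{\frac{n'_i}{N}}|s-t|^{2H-2}\,\dd s\,\dd t,
\end{equation*}
and a direct computation of the inner double integral shows, for every pair $(n,n')\in\llbracket N\rrbracket^2$,
\begin{equation*}
\int_{\frac{n-1}{N}}^{\frac{n}{N}}\!\!\int_{\frac{n'-1}{N}}^{\frac{n'}{N}}|s-t|^{2H-2}\,\dd s\,\dd t\gtrsim N^{-2H}\,(|n-n'|\vee 1)^{2H-2},
\end{equation*}
where the case $n=n'$ uses $\int_0^{1/N}\!\int_0^{1/N}|s-t|^{2H-2}\dd s\,\dd t=c_H N^{-2H}$ (finite thanks to $H>1/2$) and the case $|n-n'|\geq 2$ uses that $|s-t|\asymp|n-n'|/N$ on the product box. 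Combining these two estimates yields
\begin{equation*}
\E\bigl[(\I_m^{(N)}(f))^2\bigr]\leq C^m m!\,\|\mathcal A_N(\hat f)\|_{|\cH|^{\otimes m}}^2.
\end{equation*}

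Finally, Lemma \ref{lem:jensen} (the conditional-expectation contraction) supplies $\|\mathcal A_N(\hat f)\|_{|\cH|^{\otimes m}}\leq C^m\|\hat f\|_{|\cH|^{\otimes m}}$, delivering the claimed bound (possibly with a larger constant $C$ still depending only on $H$). The single delicate point is the box integral estimate in the regime $n=n'$ and $|n-n'|=1$, where one must verify that the singularity of $|s-t|^{2H-2}$ at the diagonal is integrable and that its contribution matches $N^{-2H}$ up to a constant depending only on $H$; once this is in place, the rest is bookkeeping.
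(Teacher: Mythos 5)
Your proposal is correct and follows essentially the same route as the paper: the Wick identity for $\bE[\omega_{\mathbf n}^\diamond\omega_{\mathbf n'}^\diamond]$, symmetrization to collapse the permutation sum into a factor $m!$, comparison of the resulting discrete quadratic form with $\|\mathcal A_N(\hat f)\|_{|\cH|^{\otimes m}}^2$, and finally Lemma \ref{lem:jensen}. The only difference is presentational: where the paper invokes a Riemann sum approximation via $\gamma_N(t)=N^{2-2H}\gamma([|t|N])\lesssim|t|^{2H-2}$, you make the same comparison explicit through the per-box integral lower bound $\int\!\!\int_{\mathrm{box}}|s-t|^{2H-2}\,\dd s\,\dd t\gtrsim N^{-2H}(|n-n'|\vee 1)^{2H-2}$, which correctly handles the diagonal and adjacent boxes.
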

\begin{proof}
For $\boldsymbol{n},\boldsymbol{n}'\in\llbracket N \rrbracket^{m}$, by Lemma \ref{lem:Wick}, we have $\bE\left[\omega_{\boldsymbol{n}}^\diamond~\omega_{\boldsymbol{n'}}^\diamond\right]=\sum_{\sigma\in\mathcal{P}_{m}}\prod_{i=1}^{m}\gamma\big(n_{i}-n'_{\sigma(i)}\big)$, where  $ \mathcal{P}_{m}$ denotes the set of all permutations on $\llbracket m\rrbracket$. Thus, we have
\vspace{-0.1cm}
\begin{equation}\nonumber
\begin{aligned}
\bE\Big[\Big(\I_{m}^{(N)}(f)\Big)^{2}\Big]&=\bE\Big[\Big(\I_{m}^{(N)}(\hat{f})\Big)^{2}\Big]=N^{-2mH}\sum_{\boldsymbol{n},\boldsymbol{n'}\in\llbracket N \rrbracket^{m}}\mathcal{A}_{N}(\hat{f})(\boldsymbol{t})\mathcal{A}_{N}(\hat{f})(\boldsymbol{t'} )\sum_{\sigma\in\mathcal{P}_{m}}\prod_{i=1}^{m}\gamma\big(n_{i}-n'_{\sigma(i)}\big)\\
&=m!N^{-2mH}\sum_{\boldsymbol{n},\boldsymbol{n'}\in\llbracket N \rrbracket^{m}}\mathcal{A}_{N}(\hat{f})(\boldsymbol{t})\mathcal{A}_{N}(\hat{f})(\boldsymbol{t'})\prod_{i=1}^{m}\gamma(n_{i}-n'_{i}),
\end{aligned}
\end{equation}

\vspace{-0.15cm}
\noindent where the last equality follows from the symmetry of $\mathcal A_N(\hat f)$. 

For $t\in\bR$, let
\vspace{-0.2cm}
\begin{equation}\label{def:gamma_N}
\gamma_{N}(t):=N^{2-2H}\gamma([|t|N]).
\end{equation}
By \eqref{con-gamma} and \eqref{con-gamma'}, we have  $\gamma_{N}\lesssim |t|^{2H-2}$ and $\lim_{N\rightarrow\infty}\gamma_{N}=|t|^{2H-2}$. Therefore, we get by a Riemann sum approximation,
\vspace{-0.2cm}
\begin{equation}\nonumber
\begin{aligned}
\bE\Big[\left(\I_{m}^{(N)}(f)\right)^{2}\Big]&=m!N^{-2m}\sum_{\boldsymbol{n},\boldsymbol{n'}\in\llbracket N \rrbracket^{m}}\mathcal{A}_{N}(\hat{f})(\boldsymbol{t})\mathcal{A}_{N}(\hat{f})(\boldsymbol{t'})\prod_{i=1}^{m}N^{2-2H}\gamma(n_{i}-n'_{i})\\
&\lesssim m!\int_{[0,1]^{2m}}|\mathcal{A}_{N}(\hat{f})(\boldsymbol{t})\mathcal{A}_{N}(\hat{f})(\boldsymbol{t'})|\prod\limits_{i=1}^m|t_{i}-t_{i}'|^{2H-2}\dd t_{i}\dd t_{i}'\\
&=m!\|\mathcal{A}_{N}(\hat{f})\|_{|\cH|^{\otimes m}}^{2}\leq m! C^{m}\|\hat{f}\|_{|\cH|^{\otimes m}}^{2},
\end{aligned}
\end{equation}

\vspace{-0.15cm}
\noindent where the last inequality follows from Lemma \ref{lem:jensen}.
\end{proof}

\begin{proposition}\label{prop:1-joint}
For $f\in|\cH|$,  $\I_{1}^{(N)}(f)$ converges weakly to the Wiener integral $\I_{1}(f)$ as $N\rightarrow\infty$. Moreover, for any $k\in\bN$\ and $f_{1},\cdots,f_{k}\in|\cH|$, we have the joint convergence in distribution:
\vspace{-0.2cm}
$$\Big(\I_{1}^{(N)}(f_{1}),\cdots,\I_{1}^{(N)}(f_{k})\Big)\overset{(d)}{\longrightarrow}\left(\I_{1}(f_{1}),\cdots,\I_{1}(f_{k})\right),\quad\text{as}~N\rightarrow\infty.$$

\vspace{-0.5cm}
\end{proposition}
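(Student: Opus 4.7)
\medskip\noindent\textbf{Proof proposal.} The plan is to exploit joint Gaussianity. Both $\I_1^{(N)}(f)=N^{-H}\sum_{n=1}^N\mathcal A_N(f)(n/N)\,\omega_n$ and $\I_1(f)=W(f)$ are centered Gaussian (the former a finite linear functional of the Gaussian family $(\omega_n)_n$, the latter a Wiener integral), and the same holds for the joint vector in the second assertion. For centered Gaussian vectors, weak convergence is equivalent to entrywise convergence of the covariance matrix. Hence the entire proposition reduces to the bilinear claim
\begin{equation*}
\lim_{N\to\infty}\bE\bigl[\I_1^{(N)}(f)\,\I_1^{(N)}(g)\bigr] = \langle f,g\rangle_{\cH},\qquad f,g\in|\cH|.
\end{equation*}

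I would polarize the Wick-pairing computation used at the beginning of the proof of Lemma~\ref{lem:isometry}. Writing the resulting double sum $N^{-2}\sum_{n,n'}\mathcal A_N(f)(n/N)\,\mathcal A_N(g)(n'/N)\,\gamma_N((n-n')/N)$ as a double Riemann integral over the cells $B\times B'\in\mathcal R_N^1\times\mathcal R_N^1$ yields
\begin{equation*}
\bE\bigl[\I_1^{(N)}(f)\,\I_1^{(N)}(g)\bigr] = \int_{[0,1]^2}\mathcal A_N(f)(s)\,\mathcal A_N(g)(t)\,\tilde\gamma_N(s,t)\,\dd s\,\dd t,
\end{equation*}
where $\tilde\gamma_N(s,t)$ is the piecewise-constant extension of $\gamma_N((n-n')/N)$ on $B\times B'$. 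By \eqref{con-gamma'} one has $\tilde\gamma_N(s,t)\to|s-t|^{2H-2}$ pointwise for $s\ne t$, and by \eqref{con-gamma} one has $\tilde\gamma_N(s,t)\lesssim|s-t|^{2H-2}$ off the diagonal region $\{|s-t|\le 2/N\}$.

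For continuous $f,g$, the step functions $\mathcal A_N(f),\mathcal A_N(g)$ are local averages over intervals of length $1/N$, hence converge uniformly to $f,g$. On the off-diagonal region the integrand is then pointwise dominated by $C\|f\|_\infty\|g\|_\infty|s-t|^{2H-2}$, which is integrable since $2H-2>-1$; dominated convergence supplies the desired limit. The near-diagonal contribution is $O\bigl(\|f\|_\infty\|g\|_\infty N^{1-2H}\bigr)\to 0$ because $\tilde\gamma_N\lesssim N^{2-2H}$ there on a region of area $O(1/N)$ and $H>1/2$. To extend from continuous to arbitrary $f,g\in|\cH|$, I would run a standard $\varepsilon/3$ density argument: approximate in the $|\cH|$-norm by continuous $f_m,g_m$, invoke Lemma~\ref{lem:isometry} for the uniform-in-$N$ bound $\|\I_1^{(N)}(f-f_m)\|_{L^2}\lesssim\|f-f_m\|_{|\cH|}$, and combine with the Wiener-integral isometry $\|\I_1(f-f_m)\|_{L^2}=\|f-f_m\|_{\cH}\le\|f-f_m\|_{|\cH|}$. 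Once the covariance convergence is established for all pairs $f_i,f_j$, the joint weak convergence follows at once from the Gaussian equivalence noted at the outset.

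The main technical point I foresee is the careful bookkeeping in the near-diagonal cells, where $\tilde\gamma_N$ inherits the factor $N^{2-2H}$ from the local correlation of $\omega$; verifying that their summed contribution is negligible relies crucially on the hypothesis $H>1/2$. Everything else is a routine combination of dominated convergence and density in $|\cH|$.
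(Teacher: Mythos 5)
Your proposal is correct and follows essentially the same route as the paper: both arguments reduce everything to convergence of covariances via joint Gaussianity, establish this first on a dense class ($\mathrm{I}_{(s,t]}$/simple functions in the paper, continuous functions in your write-up) by a Riemann-sum/dominated-convergence computation using \eqref{con-gamma} and \eqref{con-gamma'}, and then extend to general $f\in|\cH|$ via the uniform-in-$N$ $L^2$ bound of Lemma~\ref{lem:isometry}. The only cosmetic difference is that the paper deduces joint convergence from linearity plus Cram\'er--Wold, whereas you read it off directly from convergence of the full covariance matrix; these are interchangeable for Gaussian vectors.
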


\begin{proof}
First we consider indicator functions of the form $f(x)=\mathrm{I}_{(s,t]}(x)$ with $s,t\in\Omega_{N}$. Then
\vspace{-0.2cm}
$$\mathcal{A}_N(f)(r)=\mathrm{I}_{(s,t]}(r)\quad\text{and}\quad\I_{1}^{(N)}(f)=N^{-H}\sum_{n=1}^{N}\omega_{n}\mathrm{I}_{(Ns,Nt]}(n),$$

\vspace{-0.25cm}
\noindent and we have
\vspace{-0.2cm}
\begin{equation}\nonumber
\begin{aligned}
\bE_{\omega}\Big[\Big(\I_{1}^{(N)}(f)\Big)^{2}\Big]&=\bE_{\omega}\Big[N^{-2H}\Big(\sum_{n=1}^{N}\omega_{n}\I_{(Ns,Nt]}(n)\Big)^{2}\Big]=N^{-2H}\sum_{n,n'\in(Ns,Nt]}\gamma(n-n'). 
\end{aligned}
\end{equation}

\vspace{-0.25cm}
\noindent Denoting $r=\frac{n}{N}, r'=\frac{n'}{N}$, we get
\vspace{-0.2cm}
$$\bE\Big[\Big(\I_{1}^{(N)}(f)\Big)^{2}\Big]=N^{-2}\sum_{r,r'\in(s,t]\cap\Omega_{N}}\gamma_{N}(r-r')\overset{N\rightarrow\infty}{\longrightarrow}\int_{s}^{t}\int_{s}^{t}|r-r'|^{2H-2}\dd r\dd r'=\|f\|_{\mathcal{H}}^{2}.$$

\vspace{-0.25cm}
\noindent Noting that $\I_{1}^{(N)}(f) $ and $\I_{1}(f)$\ are Gaussian random variables and $\I_{1}(f)$ has zero mean and variance $\|f\|_{\mathcal{H}}^{2} $, we have $\I_{1}^{(N)}(f)\overset{(d)}{\longrightarrow}\I_{1}(f)$ as $N\rightarrow\infty.$

Similarly, one can show that $\I_{1}^{(N)}(f)\overset{(d)}{\longrightarrow}\I_{1}(f)$ as $N\rightarrow\infty$ also holds for  a simple function $f$. For a general function $f\in|\cH|$, since simple functions are dense in $|\cH|$, there exists a sequence of simple functions $\{f_{n}\}_{n\in\bN}$ converging to $f$ in $|\mathcal{H}|$. On the one hand, for each fixed $n$, we have shown that $\I_{1}^{(N)}(f_{n})\overset{(d)}{\longrightarrow}\I_{1}(f_{n})$ as $N\rightarrow\infty.$ On the other hand, for each fixed $N$, by Lemma \ref{lem:isometry}, we have $\I_{1}^{(N)}(f_{n})\rightarrow\I_{1}^{(N)}(f)$ as $n\rightarrow\infty$ in $L^{2}$  uniformly in $N$. Meanwhile, we also have $\I_{1}(f_{n})\rightarrow\I_{1}(f)$ as $n\rightarrow\infty$\ in $L^{2}$ by the property of the Wiener integral. Combining the above convergences and by Lemma~\ref{lem:weak-con}, we have $\I_{1}^{(N)}(f)\overset{(d)}{\longrightarrow}\I_{1}(f)$\ as $N\rightarrow\infty$ for $f\in|\cH|$. Finally, the joint convergence follows from the linearity of $\I_{1}^{(N)}$ and the Cram\'er-Wold Theorem.
\end{proof}

\begin{proposition}\label{prop:2-joint}
For any $m\in\bN$ and $f\in|\cH|^{\otimes m} $, we have
\vspace{-0.2cm}
\begin{equation}\label{eq:multi-conv}
\I_{m}^{(N)}(f)\overset{(d)}{\longrightarrow}\I_{m}(f),\quad\text{as}~N\rightarrow\infty,
\end{equation}

\vspace{-0.2cm}
\noindent where $ \I_{m}(f)$ is the $m$-th multiple Wiener integral of f. Moreover, for any $k\in\bN$ and $f_{1},\cdots,f_{k}$ with $f_{i}\in|\cH|^{\otimes l_{i}}$ for $l_{i}\in\bN$, we have the joint convergence in distribution
\vspace{-0.1cm}
\begin{equation}\label{eq:joint_weak_conv}
\left(\I_{l_{1}}^{(N)}(f_{1}),\cdots,\I_{l_{k}}^{(N)}(f_{k})\right)\overset{(d)}{\longrightarrow}\left(\I_{l_{1}}(f_{1}),\cdots,\I_{l_{k}}(f_{k})\right),\quad\text{as}~N\rightarrow\infty.
\end{equation}

\vspace{-0.5cm}
\end{proposition}
\begin{proof}
We only prove \eqref{eq:multi-conv}. The joint convergence follows from the linearity of $\I_{l}^{(N)}$ and the Cram\'er-Wold Theorem.

First, we consider $f=h_{1}^{\otimes p_{1}}\otimes\cdots\otimes h_{l}^{\otimes p_{l}}$ with $h_{i}\in|\cH|$ and $p_{1}+\cdots+p_{l}=m$. For such $f$, we have
\vspace{-0.1cm}
\begin{equation}\nonumber
\begin{aligned}
\I_{m}^{(N)}(f)&=N^{-2mH}\sum_{\boldsymbol{n}\in\llbracket N \rrbracket^{m}}\mathcal{A}_{N}(h_{1}^{\otimes p_{1}}\otimes\cdots\otimes h_{l}^{\otimes p_{l}})\omega_{\boldsymbol{n}}^\diamond\\
&=N^{-2mH}\sum_{\boldsymbol{n}\in\llbracket N \rrbracket^{m}}\mathcal{A}_{N}(h_{1})^{\otimes p_{1}}\otimes\cdots\otimes\mathcal{A}_{N}(h_{l})^{\otimes p_{l}}\omega_{\boldsymbol{n}}^\diamond=\left(\I_{1}^{(N)}(h_{1})\right)^{\diamond p_{1}}\diamond \cdots\diamond\left(\I_{1}^{(N)}(h_{l})\right)^{\diamond p_{l}}
\end{aligned}
\end{equation}

\vspace{-0.15cm}
\noindent By Lemma \ref{lem:isometry}, we have $\bE[(\I_{1}^{(N)}(h_{i}))^{2}]\leq C\|h_{i}\|_{|\cH|}^{2}$ uniformly in $N$. Note that $\I_{1}^{(N)}(h_{i}) $ is a centered Gaussian random variable, and thus  its higher moments can be expressed via its variance. Hence, we have  for any $q\in\bN$,
\vspace{-0.1cm}
$$\bE\Big[\Big(\I_{1}^{(N)}(h_{i})\Big)^{2q}\Big]\leq C^q(2q-1)!!\|h_{i}\|_{|\cH|}^{2q}. $$

\vspace{-0.15cm}
Combining the joint convergence in Proposition \ref{prop:1-joint}, we can apply \cite[Lemma B.3]{rsw24} to get
\vspace{-0.1cm}
\begin{equation}\nonumber
\begin{aligned}
\I_{m}^{(N)}(f)&=\left(\I_{1}^{(N)}(h_{1})\right)^{\diamond p_{1}}\diamond \cdots\diamond\left(\I_{1}^{(N)}(h_{l})\right)^{\diamond p_{l}}\overset{(d)}{\longrightarrow}\left(\I_{1}(h_{1})\right)^{\diamond p_{1}}\diamond\cdots\diamond\left(\I_{1}(h_{l})\right)^{\diamond p_{l}},\quad\text{as}~N\rightarrow\infty,
\end{aligned}
\end{equation}

\vspace{-0.15cm}
\noindent where  the weak limit is equal to, noting \eqref{e:wick},
\vspace{-0.1cm}
\begin{equation*}
\left(\I_{p_{1}}(h_{1}^{\otimes p_{1}})\right)\diamond\cdots\diamond\left(\I_{p_{l}}(h_{l}^{\otimes p_{l}})\right)=\I_{m}\left(h_{1}^{\otimes p_{1}}\otimes\cdots\otimes h_{l}^{\otimes p_{l}}\right)=\I_{m}(f).
\end{equation*}

\vspace{-0.15cm}
For a general function $f\in|\cH|^{\otimes m}$, we can find a sequence of functions $\{f_{n}\}_{n\in\bN}$ which are the linear combinations of the functions of the form $h_{1}^{\otimes p_{1}}\otimes\cdots\otimes h_{l}^{\otimes p_{l}} $ satisfying $f_{n}\rightarrow f$ in $|\cH|^{\otimes m}$. Similarly to the proof of Proposition \ref{prop:1-joint}, we have
$\I_{m}^{(N)}(f)\overset{(d)}{\longrightarrow}\I_{m}(f)\text{, as}~N\rightarrow\infty$.
\end{proof}

\section{The Skorohod case}\label{sec:skor}
In Section \ref{subsec:weak_skorohod}, we study the weak convergence for the Wick-ordered partition function defined in \eqref{e:partition'} and prove Theorem \ref{thm1} which guarantees the weak convergence of the rescaled Wick-ordered partition functions in the $L^2$-regime when $H\in(\frac12, 1)$ and $\alpha\geq\frac12$.   In Section~\ref{sec:discussion},  we   discuss the uniform integrability and characterize the weak limit of the Wick-ordered partition functions for the case $\alpha\le \frac12$.

By a Taylor expansion and Wick exponential \eqref{e:wicke}, we get
\vspace{-0.1cm}
\begin{equation}\label{nzf_expand}
\begin{aligned}
\nzf&=\bE_{\tau}\Big[\exp\Big\{h\sum_{x\in\Omega_{N}}\delta_{x}\Big\}\exp\Big\{\diamond\Big(\beta\sum_{x\in\Omega_{N}}\omega_{Nx}\delta_{x}\Big) \Big\}\Big]\\
&=\bE_{\tau}\Big[\Big(\sum_{k=0}^{\infty}\frac{1}{k!}\Big(h\sum_{x\in\Omega_{N}}\delta_{x}\Big)^{k}\Big)\Big(\sum_{m=0}^{\infty}\frac{1}{m!}\Big(\beta\sum_{x\in\Omega_{N}}\omega_{Nx}\delta_{x}\Big)^{\diamond m}\Big)\Big]\\
&=\sum_{k=0}^{\infty}\sum_{m=0}^{\infty}\frac{h^{k}\beta^{m}}{k!m!}\sum_{\boldsymbol{n}\in\llbracket N\rrbracket^{k+m}}\bP(n_{1}\in\tau,n_{2}\in\tau,\cdots,n_{k+m}\in\tau)\omega_{n_{1}}\diamond\omega_{n_{2}}\diamond\cdots\diamond\omega_{n_{m}}.
\end{aligned}
\end{equation}


\vspace{-0.15cm}
We shall write $\tilde{Z}_N:=\tilde{Z}_{N,\beta_N}^{\omega,h_N}$ throughout the rest of the paper for simplicity. By \eqref{nzf_expand}, we have
\vspace{-0.1cm}
\begin{equation}\label{e:tZN}
\tilde{Z}_N=\sum_{k=0}^{\infty}\sum_{m=0}^{\infty}\frac{1}{k!m!}S_{m,k}^{(N)},
\end{equation}

\vspace{-0.15cm}
\noindent where
\begin{equation}\label{e:S}
S_{m,k}^{(N)}:=\beta_{N}^{m}h_{N}^{k}\sum_{\boldsymbol{n}\in\llbracket N\rrbracket^{k+m}}\bP\big(n_{1}\in\tau,n_{2}\in\tau,\cdots,n_{k+m}\in\tau\big)\,\omega_{n_{1}}\diamond\omega_{n_{2}}\diamond\cdots\diamond\omega_{n_{m}}.
\end{equation}

\vspace{-0.1cm}
By recalling the definition of $\beta_N$ and $h_N$ from \eqref{scale:beta}, denote
\vspace{-0.1cm}
\begin{equation}\label{def:f_mk}
\begin{split}
& f_{m,k}^{(N)}(t_{1},\cdots,t_{m})\\
&:=\begin{cases}
\displaystyle 
\sum_{\boldsymbol{q}\in\llbracket N \rrbracket^{m}}\sum_{\boldsymbol{s}\in\llbracket N \rrbracket^{k}}N^{-k}\frac{L(N)^{m+k}}{(N^{\alpha-1})^{m+k}}P_\tau(\boldsymbol{q},\boldsymbol{s})\prod_{i=1}^{m}\mathrm{I}_{(\frac{q_{i}-1}{N},\frac{q_{i}}{N}]}(t_{i}),\quad&\text{for}~1-H<\alpha<1,\\[10pt]
\displaystyle
\sum\limits_{\boldsymbol{q}\in\llbracket N \rrbracket^{m}}\sum\limits_{\boldsymbol{s}\in\llbracket N \rrbracket^{k}}N^{-k}P_\tau(\boldsymbol{q},\boldsymbol{s})\prod\limits_{i=1}^{m}\mathrm{I}_{(\frac{q_{i}-1}{N},\frac{q_{i}}{N}]}(t_{i}),\quad&\text{for}~\bE[\tau_1]<\infty,\\[10pt]
\displaystyle \sum\limits_{\boldsymbol{q}\in\llbracket N \rrbracket^{m}}\sum\limits_{\boldsymbol{s}\in\llbracket N \rrbracket^{k}}N^{-k}l(N)^{m+k} P_\tau(\boldsymbol{q},\boldsymbol{s})\prod\limits_{i=1}^{m}\mathrm{I}_{(\frac{q_{i}-1}{N},\frac{q_{i}}{N}]}(t_{i}), \quad&\text{if}~\alpha=1 \text{ and } \bE[\tau_1]=\infty,
\end{cases}
\end{split}
\end{equation}

\vspace{-0.15cm}
\noindent where
\vspace{-0.2cm}
\begin{equation}\label{def:P_tau}
P_\tau(\boldsymbol{q},\boldsymbol{s}):=\bP(\{q_{1},\cdots,q_{m},s_{1},\cdots,s_{k}\}\subset\tau).
\end{equation}
Then, recalling the definition \eqref{e:I(f)} for $\I_m^{(N)}(f)$, we have
\vspace{-0.1cm}
\begin{equation}\label{e:SN}
S_{m,k}^{(N)}=\hat{\beta}^m\hat{h}^k\I^{(N)}_m(f_{m,k}^{(N)}).
\end{equation}


\vspace{-0.5cm}

\subsection{Weak convergence} \label{subsec:weak_skorohod}

First, we prove the weak convergence of each chaos, i.e., the weak convergence of $\I_m^{(N)}(f_{m,k}^{(N)})$, where $\I_m^{(N)}$ is given by \eqref{e:I(f)} and $f_{m,k}^{(N)}$ by \eqref{def:f_mk}.

\vspace{-0.2cm}
\begin{assumptionp}{(A1)}\label{H}
Assume  $H\in(\frac12, 1)$ and $\alpha+H>1$. \end{assumptionp}
\vspace{-0.5cm}

\begin{proposition}\label{prop:weak-con-Sk}
Assume \ref{H}. Then, we have
\vspace{-0.1cm}
\begin{equation}\label{chaos_conv}
\I_{m}^{(N)}(f_{m,k}^{(N)})\overset{(d)}{\longrightarrow}\I_{m}(\psi_{m,k}),\quad\text{as}~N\to\infty.
\end{equation}
Moreover, for any $p\in\bN$\ and $m_{1},\cdots,m_{p},k_{1},\cdots,k_{p}\in\bN_0$, we have the following joint convergence in distribution
\vspace{-0.1cm}
\begin{equation}\label{joint_weak_conv}
\left(\I_{m_1}^{(N)}(f_{m_1,k_1}^{(N)}),\cdots,\I_{m_p}^{(N)}(f_{m_p,k_p}^{(N)}) \right)\overset{(d)}{\longrightarrow}\left( \I_{m_1}(\psi_{m_1,k_1}),\cdots,\I_{m_p}(\psi_{m_p,k_p})\right),\quad\text{as}~N\rightarrow\infty.
\end{equation}

\vspace{-0.15cm}
\noindent Here
\vspace{-0.2cm}
\begin{eqnarray}\label{e:psi}
\psi_{m,k}(t_{1},\cdots,t_{m}):=
\begin{cases}
\displaystyle \int_{[0,1]^{k}} \phi_{m+k} (t_{1},\cdots,t_{m+k})\dd t_{m+1}\cdots \dd t_{m+k},\quad&\text{if}~ 1-H<\alpha<1,\\
\displaystyle\Big(\frac{1}{\bE[\tau_{1}]}\Big)^{m+k}, & \text{if }~\bE[\tau_{1}]<\infty,\\[10pt]
1\, , &\text{if } \alpha=1 \text{ and } \E[\tau_1]=\infty,
\end{cases}
\end{eqnarray}

\vspace{-0.15cm}
\noindent where the integrand $\phi_{m+k}$ is the symmetric function given by \eqref{e:phi} with $\alpha<1$. 
\end{proposition}

\begin{proof}
We shall focus on the case $1-H<\al<1$ and the proofs for the other two cases are similar. 

Firstly, observe that $\psi_{m,k}(t_{1},\cdots,t_{m})$ belongs to the space $|\cH|^{\otimes m}$ due to the following lemma.
\begin{lemma}\label{lem:norm_degenerate_trace}
 There exists a constant $C=C_{\alpha,H}$ such that for all $m,k\in\bN$,
 \vspace{-0.1cm}
\begin{equation*}
\|\psi_{m,k}\|_{|\cH|^{\otimes m}}^2\leq C_H^m\|\phi_{m+k}\|_{L^{1/H}(\bR^{m+k})}^2\leq\frac{C^{m+k}[(m+k)!]^{2H}}{\Gamma\big((m+k)\frac{\alpha+H-1}{H}+1\big)^{2H}}.
\end{equation*}

\vspace{-0.5cm}
\end{lemma}
\begin{proof}[Proof of Lemma \ref{lem:norm_degenerate_trace}]
Recall \eqref{def:B_HD} and  apply Lemma \ref{lem:1/H norm} and Jensen's inequality,
\begin{equation*}
\begin{split}
&\|\psi_{m,k}\|_{|\cH|^{\otimes m}}^2\leq C_H^m\|\psi_{m,k}\|_{L^{1/H}(\bR^m)}^2\leq C_H^m\|\phi_{m+k}\|_{L^{1/H}(\bR^{m+k})}^2.
\end{split}
\end{equation*}
Then by permuting $t_1,\cdots,t_{m+k}$ and by Lemma \ref{lem:a direct calculate}, we obtain the desired bound.
\end{proof}

By Proposition \ref{prop:2-joint}, we have $\I_{m}^{(N)}(\psi_{m,k})\overset{(d)}{\longrightarrow}\I_{m}(\psi_{m,k})\text{, as}~N\rightarrow\infty.$ To prove \eqref{chaos_conv}, we denote
\vspace{-0.1cm}
\begin{equation*}
Y_{m,k}^{(N)}=\I_{m}^{(N)}(\psi_{m,k})-\I_{m}^{(N)}(f_{m,k}^{(N)})=\I_{m}^{(N)}(\psi_{m,k}-f_{m,k}^{(N)}),
\end{equation*}

\vspace{-0.15cm}
\noindent and then it suffices to show $Y_{m,k}^{(N)}\rightarrow 0$ in $L^{2}$ as $N\rightarrow\infty$. By Lemma \ref{lem:isometry} and noting the symmetry of $f_{m,k}^{(N)} $ and $\psi_{m,k}$,  we get  $\bE[(Y_{m,k}^{(N)})^{2}]\leq m!C^{m}\|f_{m,k}^{(N)}-\psi_{m,k}\|_{|\cH|^{\otimes m}}^{2}.$  
Thus, it remains to show $\lim_{N\to\infty}\|f_{m,k}^{(N)}-\psi_{m,k}\|_{|\cH|^{\otimes m}}^{2}=0.$ Define 
\vspace{-0.2cm}
\begin{equation}\label{def:g_mk}
g_{m,k}^{(N)}(t_{1},\cdots,t_{m+k}):=\sum_{\boldsymbol{n}\in\llbracket N \rrbracket^{m+k}}\frac{L(N)^{m+k}}{(N^{\alpha-1})^{m+k}}\bP\left(\{n_{1},\cdots,n_{m+k}\}\subset \tau\right)\prod_{i=1}^{m+k}\I_{(\frac{n_{i}-1}{N},\frac{n_{i}}{N}]}(t_{i}).
\end{equation}

\vspace{-0.15cm}
\noindent Recall \eqref{def:f_mk} and we can get
\vspace{-0.2cm}
\begin{equation}\label{e:f-g}
f_{m,k}^{(N)}(t_{1},\cdots,t_{m})=\int_{[0,1]^k}g_{m,k}^{(N)}(t_{1},\cdots,t_{m+k})\prod\limits_{i=1}^k\dd t_{m+i}.
\end{equation}

\vspace{-0.15cm}
\noindent By \eqref{local} and the renewal property, we have $\lim_{N\to\infty}g_{m,k}^{(N)}(t_{1},\cdots,t_{m+k})=\phi_{m+k}
(t_{1},\cdots,t_{m+k})$. 
By Karamata's representation theorem for slowly varying function, we have for any $\epsilon>0$, there exists a constant $A=A_{\epsilon}\in(0,\infty)$ such that
\vspace{-0.1cm}
\begin{equation}\label{Karamata}
    \frac{1}{A}\left(\frac{n}{m}\right)^{-\epsilon}\leq \frac{L(n)}{L(m)}\leq A\left(\frac{n}{m}\right)^{\epsilon},~\text{ for all } n,m\in\bN~\text{with}~m\leq n.
\end{equation}

\vspace{-0.15cm}
\noindent Also by (\ref{renewl theorem}), for a larger constant $A'\in(0,\infty)$, we have
\vspace{-0.1cm}
\begin{equation*}
\frac{C_\al}{A'(t-s)^{1-\al-\epsilon}}\leq \frac{L(N)}{N^{\al-1}}\bP_{\tau}\left(N(t-s)\in\tau\right)\leq \frac{A'C_\al}{(t-s)^{1-\al+\epsilon}},~ \text{ for all} ~t,s\in\Omega_N.
\end{equation*}

\vspace{-0.15cm}
\noindent Together with \eqref{con-gamma} and \eqref{con-gamma'}, we have that for $t_i\in (\frac{n_i-1}{N}, \frac{n_i}{N}], i=1, \dots, m+k$,
\vspace{-0.1cm}
\begin{equation}\label{ine:g-phi}
 g_{m,k}^{(N)}(t_1,\cdots,t_{m+k})\leq C^{m+k}\left(\phi_{m+k}(t_1,\cdots,t_{m+k})\right)^{\frac{1-\al+\epsilon}{1-\al}}.
\end{equation}

\vspace{-0.15cm}
\noindent By choosing $\epsilon$ sufficiently small such that $(\alpha-\epsilon)+H>1$, the term on the right-hand side above belongs to $L^{\frac{1}{H}}$, as has been shown in Lemma ~\ref{lem:norm_degenerate_trace}. Then, by the dominated convergence theorem, we have $\lim_{N\to\infty}\|g_{m,k}^{(N)}-\phi_{m+k}\|_{L^{1/H}(\bR^{m+k})}=0$. 

Finally, Jensen's inequality yields
\vspace{-0.2cm}
\begin{equation*}
\int_{[0,1]^{m}}\left|f_{m,k}^{(N)}-\psi_{m,k}\right|^{\frac{1}{H}}\dd t_{1}\cdots \dd t_{m}\leq\int_{[0,1]^{m+k}}\left|g_{m,k}^{(N)}-\phi_{m+k}\right|^{\frac{1}{H}}\dd t_{1}\cdots \dd t_{m+k},
\end{equation*}

\vspace{-0.25cm}
\noindent and thus by Lemma \ref{lem:1/H norm}, $\|f_{m,k}^{(N)}-\psi_{m,k}\|_{|\cH|^{\otimes m}}^{2}\leq C_H^m\|f_{m,k}^{(N)}-\psi_{m,k}\|_{L^{1/H}(\bR^{m})}^{2}\to 0$, as $N\to\infty$.

The joint convergence in distribution \eqref{joint_weak_conv} is then a direct consequence of \eqref{eq:joint_weak_conv} in Proposition \ref{prop:2-joint} and $Y_{m_i,k_i}^{(N)}\to0$ in $L^2$ as $N\to\infty$ above for each $i=1,\cdots, p$.
\end{proof}
Now we are ready to prove our first main result. 
\begin{proof}[Proof of Theorem \ref{thm1}]

\textbf{Case (1):} We assume \ref{H} and $\alpha>\frac12$, i.e., $H\in(\frac12, 1)$ and $\alpha>\frac12$. Note that
\vspace{-0.2cm}
\begin{equation}\nonumber
\begin{aligned}
\tilde{Z}_{\hat{\beta },\hat{h }}&=\sum_{k=0}^{\infty}\frac{1}{k!}\int_{[0,1]^{k}}\phi_{k}(t_{1},\cdots,t_{k})\prod\limits_{i=1}^k\diamond\left(\hat{\beta }W(\dd t_{i})+\hat{h }\dd t_{i}\right)\\
&=\sum_{k=0}^{\infty}\sum_{p=0}^k \frac{\hat{\beta }^{p}\hat{h }^{k-p}}{k!}{\binom{k}{p}}\I_{p}\bigg(\int_{[0,1]^{k-p}}\phi_{k}(t_{1},\cdots,t_{k})\dd t_{p+1}\cdots\dd t_{k}\bigg)\\
&\xlongequal{q=k-p}\sum_{k=0}^{\infty}\sum_{p+q=k}\frac{\hat{\beta }^{p}\hat{h }^{q}}{p!q!}\I_{p}(\psi_{p,q})=\sum_{m=0}^{\infty}\sum_{k=0}^{\infty}\frac{\hat{\beta }^{m}\hat{h }^{k}}{m!k!}\I_{m}(\psi_{m,k}),
\end{aligned}
\end{equation}

\vspace{-0.25cm}
\noindent where $\psi_{m,k}$ is given in \eqref{e:psi}. 

Recall that $\tilde{Z}_N=\tilde{Z}_{N,\beta_N}^{\omega,h_N}$ is given in \eqref{e:tZN} (see also \eqref{e:SN}):
\vspace{-0.2cm}
$$\tilde{Z}_{N}:=\sum_{m=0}^{\infty}\sum_{k=0}^{\infty}\frac{1}{m!k!}S_{m,k}^{(N)}= \sum_{m=0}^{\infty}\sum_{k=0}^{\infty}\frac{\hat{\beta }^{m}\hat{h }^{k}}{m!k!} \I_{m}^{(N)}(f_{m,k}^{(N)}). $$ 

\vspace{-0.25cm}
\noindent We define, recalling \eqref{def:f_mk} and \eqref{e:psi} for the definitions of $f_{m,k}^{(N)}$ and $\psi_{m,k}$ respectively,
\vspace{-0.2cm}
\begin{equation*}
\tilde Z_{N}^{(R)}:=\sum_{k+m\leq R}\frac{\hat{\beta }^{m}\hat{h }^{k}}{m!k!}\I_{m}^{(N)}(f_{m,k}^{(N)})\quad\text{and}\quad
\tilde Z^{(R)}:=\sum_{k+m\leq R}\frac{\hat{\beta }^{m}\hat{h }^{k}}{m!k!}\I_{m}(\psi_{m,k}).
\end{equation*}

\vspace{-0.25cm}
By \eqref{joint_weak_conv} in Proposition \ref{prop:weak-con-Sk} and the Cram\'{e}r-Wold Theorem, we have that $\tilde Z_{N}^{(R)}\overset{(d)}{\longrightarrow}\tilde Z^{(R)}$ as $N\to\infty$. To complete the proof, we will apply Lemma \ref{lem:weak-con}. Hence, it suffices to prove, as $R\rightarrow\infty$,  $\tilde Z^{(R)}\overset{L^2}{\longrightarrow}\tilde{Z}_{\hat{\beta },\hat{h }}$ and  $\tilde Z_{N}^{(R)}\overset{L^2}{\longrightarrow}\tilde{Z}_{N}$ uniformly in $N$,  which is done in the following two steps respectively.

\textbf{Step (1):}  We have 
\vspace{-0.2cm}
\begin{equation}\label{eq:L2_diff}
\begin{aligned}
\Big\|\tilde Z^{(R)}-\tilde{Z}_{\hat{\beta },\hat{h }}\Big\|_{L^{2}}&=\Big\|\sum_{k+m> R}\frac{\hat{\beta }^{m}\hat{h }^{k}}{m!k!}\I_{m}(\psi_{m,k})\Big\|_{L^2}\leq \sum_{k+m> R}\frac{\hat{\beta }^{m}|\hat{h }|^{k}}{m!k!}\Big\| \I_{m}(\psi_{m,k})\Big\|_{L^2}\\
&\leq C_H^{m+k}\sum_{k+m>R}\frac{\hat{\beta }^{m}|\hat{h }|^{k}}{\sqrt{m!}k!}\Vert\phi_{m+k}\Vert_{L^{1/H}(\bR^{m+k})}\\
&\leq\sum_{k+m>R}(C_{\hat{\beta},\hat{h}}^{\alpha,H})^{m+k}\frac{[(m+k)!]^{H}}{\sqrt{m!}k!\Gamma\big((m+k)\frac{\alpha+H-1}{H}+1\big)^{H}},
\end{aligned}
\end{equation}

\vspace{-0.25cm}
\noindent where the second and the last inequality is due to Lemma \ref{lem:norm_degenerate_trace}. By Stirling's formula, each term in the sum on the right-hand side is bounded above by
\vspace{-0.1cm}
\begin{equation}\label{eq:stirling_bd1}
C^{m+k}\frac{(m+k)^{H/2}}{\sqrt{m^{1/2}k}\left((m+k)\frac{\alpha+H-1}{H}\right)^{H/2}}\frac{(m+k)^{(m+k)H}}{ m^{m/2} k^{k}(m+k)^{(m+k)(\alpha+H-1)}}\leq C^{m+k}\frac{(m+k)^{(m+k)(1-\alpha)}}{m^{m/2} k^{k}},
\end{equation}

\vspace{-0.1cm}
\noindent  Note that for $a,b,c>0, (a+b)^c\leq(2\max\{a,b\})^c\leq 2^c(a^c+b^c)$. Hence, the right-hand side of \eqref{eq:stirling_bd1} is bounded above by:
\vspace{-0.1cm}
\begin{equation*}
C^{m+k}\Big(\frac{m^{(1-\alpha)m} m^{(1-\alpha)k} }{m^{m/2} k^{k}}+\frac{k^{(1-\alpha)k}k^{(1-\alpha)m}}{m^{m/2} k^{k}}\Big)
\end{equation*}

\vspace{-0.1cm}
\noindent which is summable in $m,k$ assuming $\alpha>\frac12$ (this is where the additional condition $\alpha>\frac12$ is used) by Lemma \ref{lem:series-mk}. Hence, we have proved $\tilde Z^{(R)}\overset{L^2}{\longrightarrow}\tilde{Z}_{\hat{\beta },\hat{h }}$ as $R\rightarrow\infty$.

\textbf{Step (2):} To prove the uniform convergence $\tilde{Z}_{N}^{(R)}\overset{L^2}{\longrightarrow}\tilde{Z}_N$ as $R\to \infty$, recalling (\ref{def:f_mk}) and (\ref{e:SN}),
\vspace{-0.1cm}
\begin{equation}\label{eq:step_2}
\begin{split}
\Big\|\tilde Z^{(R)}_{N}-\tilde{Z}_{N}\Big\|_{L^{2}}^{2}&=\bE\Big[\Big(\sum_{k+m>R}\frac{1}{m!k!}S_{m,k}^{(N)}\Big)^{2}\Big]\\&=\sum\limits_{k+m>R}\sum\limits_{k'+m>R}\frac{\hat{h}^{k+k'}\hat{\beta}^{2m}}{k!(k')!(m!)^2}\bE\left[\I_{m}^{(N)}(f_{m,k}^{(N)})\I_{m}^{(N)}(f_{m,k'}^{(N)})\right]\\
&\leq \sum\limits_{k+m>R}\sum\limits_{k'+m>R}\frac{|\hat{h}|^{k+k'}\hat{\beta}^{2m}}{k!(k')!(m!)^2}C^m m!\|f_{m,k}^{(N)}\|_{|\cH|^{\otimes m}}\|f_{m,k'}^{(N)}\|_{|\cH|^{\otimes m}}.
\end{split}
\end{equation}

\vspace{-0.3cm}
\noindent where the inequality follows from the Cauchy- Schwarz inequality and Lemma \ref{lem:isometry}. Recalling (\ref{def:g_mk}) and (\ref{e:f-g}), by Jensen's inequality and Lemma \ref{lem:1/H norm}, then \eqref{eq:step_2} is bounded above by
\vspace{-0.1cm}
\begin{equation*}
\begin{split}
&\sum\limits_{k+m>R}\sum\limits_{k'+m>R}\frac{C^{2m+k+k'}}{k!(k')!m!}\|g_{m,k}^{(N)}\|_{L^{1/H}(\bR^{m+k})}\|g_{m,k'}^{(N)}\|_{L^{1/H}(\bR^{m+k})}\\
\leq&\sum\limits_{k+m>R}\sum\limits_{k'+m>R}\frac{C^{2m+k+k'}}{k!(k')!m!}\Big\|\phi_{m+k}^{\frac{1-\al+\epsilon}{1-\al}}\Big\|_{L^{1/H}(\bR^{m+k})}\Big\|\phi_{m+k'}^{\frac{1-\al+\epsilon}{1-\al}}\Big\|_{L^{1/H}(\bR^{m+k})}\\
\leq&\sum_{k+m>R}\frac{C^{m+k}}{\sqrt{m!}k!}\Big\|\phi_{m+k}^{\frac{1-\al+\epsilon}{1-\al}}\Big\|_{L^{1/H}(\bR^{m+k})}\sum_{k'+m'>R}\frac{C^{m'+k'}}{\sqrt{m'!}k'!}\Big\|\phi_{m'+k'}^{\frac{1-\al+\epsilon}{1-\al}}\Big\|_{L^{1/H}(\bR^{m'+k'})}.
\end{split}
\end{equation*}

\vspace{-0.15cm}
\noindent where the first inequality follows from (\ref{ine:g-phi}). By the same estimate as in \eqref{eq:L2_diff} above, the right-hand side of the last inequality goes to 0 as $R\to \infty$ by choosing $\epsilon$ sufficiently small such that $(\alpha-\epsilon)+H>1$ and $\al-\epsilon>\frac{1}{2}$.

\textbf{Case (2):} We assume \ref{H} and $\alpha=\frac12$, i.e., $H\in(\frac12,1)$ and $\alpha=\frac12$. The proof is almost the same as in \textbf{Case (1)} and we indicate only the necessary modifications. First, noting that by the assumption $0<\inf_{n\in\bN}L(n)\leq\sup_{n\in\bN}L(n)<+\infty$ for this case, the exponent $\epsilon$ in~\eqref{Karamata} vanishes. Then by \eqref{eq:L2_diff}, \eqref{eq:stirling_bd1} and \eqref{eq:step_2}, it suffices to show that
\vspace{-0.15cm}
\begin{equation}\label{eq:double_sum}
\sum\limits_{m,k=0}^\infty\big(\hat{\beta}C_{H}\big)^m\big(|\hat{h}|C_{H}\big)^k\frac{[(m+k)!]^H}{\sqrt{m!}k!\Gamma((m+k)(\frac{2H-1}{2H})+1)^H}\leq\sum\limits_{m,k=0}^\infty\big(\hat{\beta}C_{H}\big)^m\big(|\hat{h}|C_{H}\big)^k\frac{(m+k)^{\frac{m+k}{2}}}{m^{m/2}k^k}
\end{equation}

\vspace{-0.1cm}
\noindent converges for  $\hat{\beta}>0,|\hat{h}|>0$ small enough.

Note that
\vspace{-0.15cm}
\begin{equation*}
\frac{(m+k)^{\frac{m+k}{2}}}{m^{m/2}k^k}\leq\frac{(2k)^{k}}{m^{m/2}k^k}\mathrm{I}_{\{k\geq m\}}+\frac{(2m)^{\frac{m+k}{2}}}{m^{m/2}k^k}\mathrm{I}_{\{k\leq m\}}\leq\frac{2^k}{m^{m/2}}\mathrm{I}_{\{k\geq m\}}+\frac{2^m m^{\frac{k}{2}}}{k^k}\mathrm{I}_{\{k\leq m\}}.
\end{equation*}

\vspace{-0.1cm}
\noindent Then the right-hand side of \eqref{eq:double_sum} is bounded by
\vspace{-0.15cm}
\begin{equation*}
\begin{split}
&\sum\limits_{m=0}^\infty\frac{(\hat{\beta}C_H)^m}{m^{m/2}}\sum\limits_{k=m}^\infty(2|\hat{h}|C_H)^k+\sum\limits_{m=0}^\infty(2\hat{\beta}C_H)^m\sum\limits_{k=0}^m\frac{(C_H|\hat{h}|\sqrt{m})^{k}}{k!}\\
\leq&\sum\limits_{m=0}^\infty\frac{(\hat{\beta}C_H)^m}{m^{m/2}}\cdot\frac{1}{1-2|\hat{h}|C_H}+\sum\limits_{m=0}^\infty e^{C_H|\hat{h}|\sqrt{m}}(2C_H\hat{\beta})^m<+\infty,
\end{split}
\end{equation*}

\vspace{-0.1cm}
\noindent given that $2C_H|\hat{h}|<1$ and $2C_H\hat{\beta}<1$.

\textbf{Case (3):} We assume $\alpha<\frac12$. Denote by $\tau'$ an independent copy of $\tau$ and we have that
\vspace{-0.15cm}
\begin{equation*}
\begin{split}
\bE\Big[\tilde{Z}_N^2\Big]&=\bE_{\tau,\tau'}\Big[\exp\Big\{\beta_N^2\sum\limits_{n,m=1}^N\gamma(n-m)\mathbf{1}_{\{n\in\tau\}}\mathbf{1}_{\{m\in\tau'\}}+h_N\sum\limits_{n=1}^N\big(\mathbf{1}_{\{n\in\tau\}}+\mathbf{1}_{\{n\in\tau'\}}\big)\Big\}\Big]\\
&\geq\exp\Big\{\beta_N^2\sum\limits_{n,m=1}^N\gamma(n-m)-2N|h_N|\Big\}\bP(\{1,\cdots,N\}\subset\tau\cap\tau')\\
&\geq\exp\Big(C\hat{\beta}^2\frac{1}{N^{2(\alpha+H-1+\epsilon)}}\sum\limits_{n,m=1}^N|n-m|^{2H-2}-C|\hat{h}|N^{1-\alpha+\epsilon}\Big)(\bP(\tau_1=1))^{2N}\\
&\geq\exp\Big(C\Big(N^{2(1-\alpha-\epsilon)}-N^{1-\alpha+\epsilon}\Big)\Big)(\bP(\tau_1=1))^{2N},
\end{split}
\end{equation*}

\vspace{-0.15cm}
\noindent which tends to infinity as $N\to\infty$ if $\alpha<\frac12$, by choosing $\epsilon>0$ small enough in \eqref{Karamata}. 
\end{proof}

\begin{remark}\label{remark:alpha}

We now comment on the case $\alpha=\tfrac12$, which represents the threshold between the $L^2$ regime and the non-$L^2$ regime. However, this should not be regarded as the marginal case between the disorder-relevant and disorder-irrelevant regimes, since in Section~\ref{sec:discussion} we show that the subsequential limit of $\{\tilde{Z}_N\}_N$ can be nonzero even for some $\alpha<\tfrac12$. Moreover, the case $\alpha=\tfrac12$ can still be treated within the framework of \cite{AKQ14,CSZ16}, where disorder is relevant, rather than that of \cite{CSZ17+}, where disorder is marginally relevant. These two frameworks are fundamentally different.

In comparison with the parabolic Anderson model on $\mathbb{R}^2$, where the $L^2$ solution exists only locally while the $L^1$ solution exists globally (see \cite{qrv}),  we believe that when $\alpha=\tfrac12$, the $L^2$ regime of our model exists \emph{only locally}  and the $L^1$ regime exists \emph{globally} (see Remarks~\ref{rmk:A},~\ref{rem:highlight}, and Section~\ref{sec:discussion}) We also note that our assumption on $L(\cdot)$ may not be optimal. To allow for a general slowly varying function $L(\cdot)$, one would need to verify the dominated convergence theorem carefully. Since our main purpose here is to highlight the critical nature of $\alpha=\tfrac12$ by a simple argument, we are content with the present restriction on $L(\cdot)$.
\end{remark}

\subsection{Some discussion on uniform integrability}\label{sec:discussion}

By recalling Section \ref{sec12}, the Weinrib--Halperin prediction suggests that our pinning model should be disorder relevant  under \ref{H}, i.e., $H\in(\frac12,1)$ and $\alpha+H>1$. This is partially confirmed by our results in the following sense: Proposition~\ref{prop:weak-con-Sk} yields  the weak convergence of each chaos  under \ref{H}; while for the weak convergence of the partition functions, an extra condition  $\alpha\geq\frac12$ is imposed  in order to guarantee the $L^2$-convergence of the Wiener chaos expansion (see Theorem \ref{thm1}). This implies that when $H\in(\frac12, 1)$ and $\alpha\geq\frac12$,  the family $\{\tilde Z_{N, \beta_N}^{\omega,h_N}\}_{N}$ of the rescaled partition functions is bounded in $L^2$-norm (at least for small $\hat{\beta}$ and $|\hat{h}|$) and in particular, for the limiting distribution we have $\E[\tilde Z_{\hat \beta, \hat h}]=1$.

Considering the above discussion together with Remarks~\ref{rem:highlight} and~\ref{remark:alpha}, we conjecture that the family $\{\tilde Z_N\}_{N}$ converges weakly to some $L^1$-integrable random variable under \ref{H}. In the rest of this section, we shall  prove the uniform integrability of $\{\tilde Z_N\}_{N}$ and characterize its weak limit, under a stronger condition:
\vspace{-0.2cm}
\begin{assumptionp}{(A2)}\label{H2}
Assume  $H\in(\frac12, 1)$ and $\alpha+2H>2$. 
\end{assumptionp}
\vspace{-0.2cm}
Condition \ref{H2} implies condition \ref{H} since $H<1$. Note that \ref{H2} is also the optimal condition that ensures the convergence of each chaos in the Stratonovich case (see Section \ref{sec:Str}).

\begin{lemma}\label{sup}
Assuming \ref{H2}, we have
\vspace{-0.2cm}
\begin{align}\label{e:sup-h}
\sup_{N}~\beta_N^2\E \Big[\sum_{n,m=1}^N\gamma(n-m) \mathbf1_{\{n\in \tau\}}\mathbf1_{\{m\in\tau\}}\Big]<\infty\quad\text{and}\quad\sup_{N}~\E\Big[\exp\Big\{h_N\sum_{n=1}^N \mathbf1_{\{n\in \tau\}}\Big\}\Big]<\infty.
\end{align}

\vspace{-0.5cm}
\end{lemma}

\begin{proof}
We prove the result for the case $\alpha<1$ and the other cases can be proven in a similar way. For the first bound in \eqref{e:sup-h}, we have
\vspace{-0.2cm}
 \begin{equation*}
    \begin{aligned}
        \quad\beta_N^2  \E \Big[\sum_{n,m=1}^N\gamma(n-m) \mathbf1_{\{n\in \tau\}}\mathbf1_{\{m\in\tau\}}\Big]&=\hat{\beta }^2\sum_{n,m=1}^N N^{2-2H}\gamma(n-m) \frac{L(N)^2}{N^{2\al-2}}\bP_\tau(n\in\tau,m\in\tau)\frac{1}{N^2}\\&\leq C\int_0^1\int_0^1 |s-t|^{2H-2}\phi_2^{\frac{1-\al+\epsilon}{1-\al}}(s,t)\dd s\dd t<\infty
    \end{aligned}
    \end{equation*}

\vspace{-0.15cm}
\noindent for $\alpha+2H>2$, where  $\phi_2$ is defined in (\ref{e:phi}) and the inequality follows from (\ref{con-gamma}), (\ref{con-gamma'}) and (\ref{ine:g-phi}). For the second bound in \eqref{e:sup-h}, we have that for some $\alpha'\in(0,\alpha)$
\vspace{-0.1cm}
\begin{equation}\label{e:exp-h}
\begin{split}
\mathbb{E}\Big[\exp\Big\{\sum_{n=1}^N h_N\mathbf1_{\{n\in \tau\}}\Big\}\Big]&=\sum\limits_{k=0}^\infty\frac{1}{k!}(h_N)^k\sum\limits_{\boldsymbol{n}\in\llbracket N\rrbracket^k}\bP_\tau(n_1,\cdots,n_k\in\tau)\\&=\sum\limits_{k=0}^\infty\frac{\hat{h}^k}{k!}\int_{[0,1]^k}g_{0,k}^{(N)}(t_{1},\cdots,t_{k})\prod\limits_{i=1}^k\dd t_{i}\leq\sum\limits_{k=0}^\infty\frac{C^k}{\Gamma(k\alpha')}<\infty.
\end{split}
\end{equation}

\vspace{-0.15cm}
\noindent where we have used \eqref{def:g_mk}, (\ref{ine:g-phi}), \eqref{e:phi} and Lemma~\ref{lem:a direct calculate}.
\end{proof}

\begin{proposition}\label{prop:UI}
Assuming \ref{H2}, the family $\big\{\tilde Z_N=\tilde Z_{N, \beta_N}^{\omega,h_N}\big\}_{N\in\bN}$ of the rescaled partition functions is uniformly integrable.
\end{proposition}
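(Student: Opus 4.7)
The plan is to derive uniform integrability from a truncation of the $\tau$-integration, calibrated to the natural scale of the Wick compensator. With the shorthand $V(\tau) := \sum_{n=1}^N \mathbf{1}_{\{n\in\tau\}}$ and $Q(\tau) := \sum_{n,m=1}^N \gamma(n-m)\mathbf{1}_{\{n,m\in\tau\}}$, the compensator appearing in \eqref{e:partition'} is $\tfrac{\beta_N^2}{2} Q(\tau)$. For $M>0$, I would introduce
\[
\tilde Z_N^{(M)} := \bE_\tau\Big[\exp\Big\{\beta_N\sum_{n\in\tau\cap[1,N]}\omega_n + h_N V(\tau) - \tfrac{\beta_N^2}{2} Q(\tau)\Big\}\mathbf{1}_{\{\beta_N^2 Q(\tau)\le M\}}\Big],
\]
so that $R_N^{(M)}:=\tilde Z_N - \tilde Z_N^{(M)} \geq 0$. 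The strategy is to prove (i) $L^2$-boundedness of $\{\tilde Z_N^{(M)}\}_N$ for each fixed $M$, and (ii) $\sup_N \bE[R_N^{(M)}]\to 0$ as $M\to\infty$. A standard three-epsilon argument splitting $\bE[\tilde Z_N\mathbf{1}_{\{\tilde Z_N > 2K\}}]$ into a contribution from $R_N^{(M)}$ (bounded by (ii) plus Markov) and one from $\tilde Z_N^{(M)}$ (which is UI by (i)) then converts the two estimates into uniform integrability of $\tilde Z_N$.

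For (i), the replica computation, using $\bE_\omega\big[\exp\{\beta_N\sum_n \omega_n a_n\}\big] = \exp\{\tfrac{\beta_N^2}{2}\sum_{n,m}\gamma(n-m) a_n a_m\}$ with $a_n = \mathbf{1}_{\{n\in\tau\}} + \mathbf{1}_{\{n\in\tau'\}}$, yields
\[
\bE\big[(\tilde Z_N^{(M)})^2\big] = \bE_{\tau,\tau'}\Big[\exp\big\{h_N(V(\tau)+V(\tau')) + \beta_N^2 Q(\tau,\tau')\big\}\mathbf{1}_{\{\beta_N^2 Q(\tau),\beta_N^2 Q(\tau')\le M\}}\Big],
\]
where $Q(\tau,\tau'):=\sum\gamma(n-m)\mathbf{1}_{\{n\in\tau\}}\mathbf{1}_{\{m\in\tau'\}}$ and $\tau'$ is an independent copy of $\tau$. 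The essential observation is that since $\gamma$ is positive-semi-definite, Cauchy--Schwarz in the induced bilinear form gives $Q(\tau,\tau')\le\sqrt{Q(\tau)Q(\tau')}$, whence $\beta_N^2 Q(\tau,\tau')\le M$ on the truncation event. Combined with $\sup_N \bE_\tau[\exp\{c h_N V(\tau)\}]<\infty$ from \eqref{e:sup-h} (whose proof applies equally with $\hat h$ replaced by $c\hat h$ for any fixed $c>0$), one obtains $\sup_N \bE[(\tilde Z_N^{(M)})^2]\le C e^M$. For (ii), the Wick-exponential identity $\bE_\omega[\exp\{\beta_N G(\tau,\omega)-\tfrac{\beta_N^2}{2}Q(\tau)\}]=1$ for each fixed $\tau$ gives $\bE[R_N^{(M)}]=\bE_\tau[\exp\{h_N V(\tau)\}\mathbf{1}_{\{\beta_N^2 Q(\tau)>M\}}]$; bounding by Cauchy--Schwarz and applying Markov with $\sup_N \bE[\beta_N^2 Q(\tau)]<\infty$ from \eqref{e:sup-w} yields $\sup_N \bE[R_N^{(M)}]\le C M^{-1/2}\to 0$.

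The main obstacle is identifying the correct scale for truncation. The reason to cut at $\{\beta_N^2 Q(\tau)\le M\}$ rather than $\{Q(\tau)\le M\}$ is that Lemma \ref{sup} provides uniform control only of $\beta_N^2$-weighted quantities, whereas $Q(\tau)$ itself typically diverges as $N\to\infty$ under \ref{H2}; this scaling-invariant choice is what makes both (i) and (ii) work uniformly in $N$. The positive-semi-definiteness of $\gamma$, entering via $Q(\tau,\tau')\le\sqrt{Q(\tau)Q(\tau')}$, is the other crucial ingredient: without it, the cross-replica term $\beta_N^2 Q(\tau,\tau')$ would not be controlled on the truncation event, and the scheme would degenerate into an unavailable $L^2$-bound on $\tilde Z_N$ itself (cf.\ Remark~\ref{remark:alpha}).
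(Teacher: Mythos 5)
Your proposal is correct and follows essentially the same route as the paper's proof: a replica computation, the Cauchy--Schwarz decoupling $\beta_N^2 Q(\tau,\tau')\le\beta_N^2\sqrt{Q(\tau)Q(\tau')}$, a truncation at the scale of the Wick compensator controlled via Lemma~\ref{sup}, and a uniform-integrability transfer from the $L^2$-bounded truncations (Lemma~\ref{lem:UI}). The only differences are cosmetic: the paper places the indicator of $A_l^N=\{h_N V+\tfrac{\beta_N^2}{2}Q\le l\}$ inside the exponent (hence its use of Lemma~\ref{lem:keylem}) and fully decouples via $ab\le\tfrac12(a^2+b^2)$, whereas you multiply the integrand by $\mathbf{1}_{\{\beta_N^2 Q\le M\}}$ and bound the cross term directly on the truncation event.
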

\begin{proof}
Recall \eqref{e:partition'} and we have
\vspace{-0.2cm}
\begin{equation*}
\bE_\omega\Big[\Big(\tilde{Z}_N\Big)^2\Big]=\bE_{\tau,\tau'}\Big[\exp\Big\{h_N\sum_{n=1}^N(1_{\{n\in \tau\}}+\mathbf1_{\{n\in\tau'\}})\Big\}\exp\Big\{ \beta_N^2\sum_{n,m=1}^N\gamma(n-m) \mathbf1_{\{n\in \tau\}}\mathbf1_{\{m\in\tau'\}}\Big\}\Big].
\end{equation*}

\vspace{-0.15cm}
\noindent Note that
\vspace{-0.2cm}
\begin{equation*}
\begin{aligned}
&\beta_N^2\sum\limits_{n,m=1}^N\gamma(n-m)\mathbf1_{\{n\in \tau,m\in\tau'\}}=\beta_N^2\sum\limits_{n,m=1}^N\E[\omega_n\omega_m]\mathbf1_{\{n\in \tau\}}\mathbf1_{\{m\in\tau'\}}\\
\le& \beta_N^2 \bigg(\E_\omega\Big[\Big(\sum_{n=1}^N \omega_n \mathbf1_{\{n\in \tau\}}\Big)^2\Big]\E_\omega\Big[\Big(\sum_{n=1}^N \omega_n \mathbf1_{\{n\in \tau'\}}\Big)^2\Big]\bigg)^{\frac12}\\
=&\beta_N^2 \Big( \sum_{n,m=1}^N\gamma(n-m) \mathbf1_{\{n\in \tau\}}\mathbf1_{\{m\in\tau\}}\Big)^\frac12\Big( \sum_{n,m=1}^N\gamma(n-m) \mathbf1_{\{n\in \tau'\}}\mathbf1_{\{m\in\tau'\}}\Big)^\frac12.
\end{aligned}
\end{equation*}

\vspace{-0.15cm}
\noindent We have that by $ab\leq\frac12(a^2+b^2)$,
\vspace{-0.1cm}
\begin{equation*}
\begin{aligned}
\bE_\omega\Big[\Big(\tilde{Z}_N\Big)^2\Big]&\leq \bE_{\tau,\tau'}\bigg[\exp\Big\{h_N\sum_{n=1}^N 1_{\{n\in \tau\}}+\frac{\beta_N^2}{2}
\sum_{n,m=1}^N\gamma(n-m) \mathbf1_{\{n\in \tau\}}\mathbf1_{\{m\in\tau\}}\Big\}\\&\quad\times \exp\Big\{h_N\sum_{n=1}^N 1_{\{n\in \tau'\}}+\frac{\beta_N^2}{2}
\sum_{n,m=1}^N\gamma(n-m) \mathbf1_{\{n\in \tau'\}}\mathbf1_{\{m\in\tau'\}}\Big\}   \bigg].
\end{aligned}
\end{equation*}

\vspace{-0.15cm}
Denote
\vspace{-0.2cm}
\[ A_l^{N}:= \Big\{ h_N\sum_{n=1}^N 1_{\{n\in \tau\}}+\frac{\beta_N^2}{2}
\sum_{n,m=1}^N\gamma(n-m) \mathbf1_{\{n\in \tau\}}\mathbf1_{\{m\in\tau\}}   \le l \Big\}.\]

\vspace{-0.1cm}
\noindent Lemma \ref{sup} and Markov's inequality together yield:  
\vspace{-0.1cm}
\begin{equation}\label{e:p-0}
\lim_{l\to\infty}\sup_{N}\bP\left((A_l^{N})^c\right)=0.
\end{equation}

\vspace{-0.15cm}
Define
\vspace{-0.2cm}
$$\tilde Z_N(A_l^{N}):=\bE_{\tau}\Big[\exp\Big\{h_N\sum_{x\in\Omega_{N}}\delta_{x}\mathbf 1_{A_l^N}\Big\}\exp\Big\{\diamond\Big(\beta_N\sum_{x\in\Omega_{N}}\omega_{Nx}\delta_{x}\mathbf 1_{A_l^N}\Big) \Big\}\Big].$$

\vspace{-0.15cm}
\noindent Then, by Lemma \ref{lem:keylem}, we get $
|\tilde Z_N -\tilde Z_N(A_l^{N} )|\le \E_{\tau}[ \mathscr E_N \mathbf 1_{(A_l^{N})^c}]+ \bP((A_l^{N})^c)$, where
\vspace{-0.1cm}
\[\mathscr E_N:=\exp\Big\{h_N\sum_{x\in\Omega_{N}}\delta_{x}\Big\}\exp\Big\{\diamond\Big(\beta_N\sum_{x\in\Omega_{N}}\omega_{Nx}\delta_{x}\Big) \Big\}.\]

\vspace{-0.15cm}
Thus we have
\vspace{-0.1cm}
\begin{equation}\label{e:diff-0}
\begin{aligned}
\E_\omega \big|\tilde Z_N -&\tilde Z_N(A_l^N)\big|\le \bE\big[ \mathscr E_N \mathbf 1_{(A_l^{N})^c} \big]+ \bP((A_l^{N})^c)=\E_\tau\Big[\mathbf1_{(A_l^{N})^c} \exp\Big(h_N\sum_{x\in\Omega_{N}}\delta_{x}\Big)  \Big]+ \bP((A_l^{N})^c)\\
&\leq \bP((A_l^{N})^c)^{\frac{1}{2}}\E_\tau\Big(\Big[\exp\Big(2h_N\sum_{x\in\Omega_{N}}\delta_{x}\Big)  \Big]\Big)^{\frac{1}{2}}+ \bP((A_l^{N})^c)\leq C \bP((A_l^{N})^c)^{\frac{1}{2}}+\bP((A_l^{N})^c),
\end{aligned}
\end{equation}

\vspace{-0.15cm}
\noindent where the constant $C$  in the last inequality is independent of $N$ by (\ref{e:sup-h}).

Finally, noting \eqref{e:p-0}, \eqref{e:diff-0}, and the fact that $\E [\tilde{Z}_N(A_l^{N})^2]\le e^{2l}$ for all $N$,
we can apply  Lemma~\ref{lem:UI} to obtain the uniform integrability of $\{\tilde Z_N\}_{N\in \bN}$.
\end{proof}

In the proof of Proposition \ref{prop:UI}, we applied the Cauchy-Schwarz inequality to decouple $\tau$ and $\tau'$, which requires the stronger Assumption~\ref{H2}. It seems that this is a technical issue and  \ref{H} should be sufficient for the uniform integrability. 
A direct consequence of the uniform integrability is the following, which characterizes the possible weak limit of $\{\tilde{Z}_N\}_{N}$. 
\begin{proposition}\label{prop:weak_limit_L1} The sequence $\{\tilde{Z}_N\}_{N\in\bN}$ is tight. Assuming \ref{H2},   we have $\bE[\mathcal{Z}]=1$ for any weak limit $\mathcal Z$. \end{proposition}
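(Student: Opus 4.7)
The plan is to deduce tightness from nonnegativity together with a uniform $L^{1}$ bound on $\{\tilde Z_N\}$, and then to identify the mean of any subsequential weak limit by combining the uniform integrability from Proposition~\ref{prop:UI} with a Fubini-type computation of $\bE[\tilde Z_N]$.

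For tightness, the first observation is that $\tilde Z_N\ge 0$ almost surely: the integrand inside $\bE_\tau$ in the definition \eqref{e:partition'} is the product of the positive exponential $\exp\{h_N\sum_{x\in\Omega_N}\delta_x\}$ and the Wick exponential $\exp^\diamond(\beta_N\sum_{x\in\Omega_N}\omega_{Nx}\delta_x)=\exp\{\cdots-\tfrac12\mathrm{Var}_\omega(\cdots)\}$, which is manifestly positive. Next, interchanging $\bE_\omega$ and $\bE_\tau$ by Fubini and using that conditionally on $\tau$ the random variable $\beta_N\sum_{x\in\Omega_N}\omega_{Nx}\delta_x$ is centered Gaussian (so its Wick exponential is mean-one under $\bE_\omega$), one obtains
\[
\bE[\tilde Z_N]=\bE_\tau\Big[\exp\Big\{h_N\sum_{n=1}^N\mathbf 1_{\{n\in\tau\}}\Big\}\Big],
\]
which is uniformly bounded in $N$ by \eqref{e:sup-h} of Lemma~\ref{sup}. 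Markov's inequality then gives $\sup_N\bP(\tilde Z_N>M)\le M^{-1}\sup_N\bE[\tilde Z_N]\to 0$ as $M\to\infty$, and since $\tilde Z_N$ is $[0,\infty)$-valued, this is exactly tightness on $\bR$.

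For the mean identity, fix any subsequence $\{\tilde Z_{N_k}\}$ converging weakly to $\mathcal Z$ (such a subsequence exists by tightness). Under \ref{H2}, Proposition~\ref{prop:UI} supplies uniform integrability of $\{\tilde Z_N\}$, so Vitali's convergence theorem upgrades the weak convergence along this subsequence to convergence in $L^1$, giving
\[
\bE[\mathcal Z]=\lim_{k\to\infty}\bE[\tilde Z_{N_k}]=\lim_{N\to\infty}\bE_\tau\Big[\exp\Big\{h_N\sum_{n=1}^N\mathbf 1_{\{n\in\tau\}}\Big\}\Big].
\]
The claim $\bE[\mathcal Z]=1$ thus reduces to the evaluation of this deterministic renewal-theoretic limit, which I would handle by Taylor-expanding the exponential and passing to the limit term by term, justified by the $L^{1/H}$ domination \eqref{ine:g-phi} combined with dominated convergence, in the spirit of the computation \eqref{e:exp-h}.

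The main obstacle of the argument is already absorbed into Proposition~\ref{prop:UI}: the delicate work is the uniform integrability, for which the stronger hypothesis \ref{H2} is required. Once UI is in hand, tightness is automatic from nonnegativity and the $L^1$ bound, the passage from weak to $L^1$ convergence is standard, and only the short evaluation of $\lim_N\bE[\tilde Z_N]$ remains to conclude.
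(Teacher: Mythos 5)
Your overall route is the same as the paper's: tightness from nonnegativity plus a uniform bound on $\bE[\tilde Z_N]$, and the mean identity from the uniform integrability supplied by Proposition~\ref{prop:UI}. The tightness half and the step ``weak convergence $+$ UI $\Rightarrow$ $\bE[\mathcal Z]=\lim_k\bE[\tilde Z_{N_k}]$'' are both fine (one small formal point: you invoke \eqref{e:sup-h} for tightness, which Lemma~\ref{sup} states under \ref{H2}, whereas the proposition claims tightness unconditionally; the bound \eqref{e:sup-h} in fact needs no part of \ref{H2}, but you should say so rather than cite the lemma as stated).

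The genuine gap is the final step you defer. Your Fubini computation correctly gives $\bE[\tilde Z_N]=\bE_\tau\big[\exp\{h_N\sum_{n=1}^N\mathbf 1_{\{n\in\tau\}}\}\big]$, but the ``short evaluation'' of its limit does not produce $1$ unless $\hat h=0$: Taylor expanding and passing to the limit term by term (exactly as in \eqref{e:exp-h} and the convergence $g^{(N)}_{0,k}\to\phi_k$) yields
\[
\lim_{N\to\infty}\bE[\tilde Z_N]=\sum_{k=0}^{\infty}\frac{\hat h^{k}}{k!}\int_{[0,1]^{k}}\phi_{k}(t_1,\dots,t_k)\,\dd t_1\cdots\dd t_k ,
\]
which equals $e^{\hat h/\bE[\tau_1]}$ in the finite-mean case and a Mittag--Leffler-type series when $0<\alpha<1$; this is $1$ only for $\hat h=0$. (The same value is visible in the chaos expansion \eqref{e:Z1}: the deterministic part of $\tilde Z_{\hat\beta,\hat h}$ is precisely this series.) The paper's own one-line proof asserts $\bE[\tilde Z_N]=1$ outright, which is subject to exactly the same caveat — the Wick normalization in \eqref{e:partition'} removes only the $\omega$-part, not the $h$-part. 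So your reduction is actually the more accurate one, but it cannot close the claim as literally stated; the honest conclusion of your argument is $\bE[\mathcal Z]=\lim_N\bE[\tilde Z_N]$ as displayed above (equal to $1$ when $\hat h=0$), and you should either restrict to that case or flag the discrepancy rather than assert that the remaining computation gives $1$.
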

\begin{proof}
The tightness follows from  $\tilde Z_N>0$ a.s.\ and $\E\big[\tilde Z_N\big]=1$, for all $N\in\mathbb N$.  Assuming \ref{H2}, the family $\{\tilde{Z}_N\}_{N\in\bN}$ is uniformly integrable due to Proposition \ref{prop:UI} and thus $\E[\mathcal Z]=1$. 
\end{proof}



\section{The Stratonovich case}\label{sec:Str}

In this section, we shall study the weak convergence for the original partition function defined in \eqref{e:partition} and prove Theorem \ref{thm2}.  Our approach is similar to that in Section \ref{subsec:weak_skorohod} but with more involved details. First, we show that the limit \eqref{e:Z2} in Theorem \ref{thm2} is well-defined in $L^2$. To do that, we need good enough upper bounds for the norms of each chaos and the trace defined in \eqref{e:trace}. Then we show that each term in the Taylor expansion of the partition function converges weakly to a corresponding chaos. Theorem \ref{thm2} then follows from a classical truncation procedure.

\subsection{\texorpdfstring{$L^2$}--convergence of the continuum partition function.}\label{subsec:41} Define the following continuum partition function 
\vspace{-0.1cm}
\begin{equation}\label{continuum}
\begin{aligned}
Z_{\hat{\beta },\hat{h }}&=\sum_{k=0}^{\infty}\frac{1}{k!}\int_{[0,1]^{k}}\phi_{k}(t_{1},\cdots,t_{k})\prod_{i=1}^{k}\left(\hat{\beta }W(\dd t_{i})+\hat{h }\dd t_{i}\right)\\
&=\sum_{k=0}^{\infty}\sum_{p+q=k}\frac{\hat{\beta }^{p}\hat{h }^{q} }{p!q!}\mathbb{I}_{p}\bigg(\int_{[0,1]^{q}}\phi_{k}(t_{1},\cdots,t_{k})dt_{p+1}\cdots dt_{k}\bigg)\\&=\sum_{k=0}^{\infty}\sum_{p+q=k}\frac{\hat{\beta }^{p}\hat{h }^{q}}{p!q!}\mathbb{I}_{p}(\psi_{p,q})=\sum_{m=0}^{\infty}\sum_{k=0}^{\infty}\frac{\hat{\beta }^{m}\hat{h }^{k}}{m!k!}\mathbb{I}_{m}(\psi_{m,k}). 
\end{aligned}
\end{equation}

\vspace{-0.15cm}
Recall the functions $\psi$  in \eqref{e:psi}, $\phi$  in \eqref{e:phi}, and the trace defined in \eqref{e:trace}. 
\begin{lemma}\label{lem:Tr^j}
Assume \ref{H2}. 
Then we have $\mathrm{Tr} ^j {\psi}_{m,k}\in|\cH|^{\otimes(m-2j)}$ for $j=0,1,...,\lfloor\frac{m}{2}\rfloor$. Moreover,
\vspace{-0.1cm}
\begin{equation}\label{e:compare}
\Vert\mathrm{Tr} ^j {\psi}_{m,k}\Vert^2_{|\cH|^{\otimes (m-2j)}}\le \Vert\mathrm{Tr} ^j {\phi}_{m+k}\Vert^2_{|\cH|^{\otimes (m+k-2j)}}.
\end{equation}
\end{lemma}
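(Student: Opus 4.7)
The strategy is to recognize that $\psi_{m,k}$ is obtained from $\phi_{m+k}$ by integrating out the last $k$ variables, and that traces commute with this partial integration. Concretely, since both $\phi_{m+k}$ and the kernel $K(s)=|s|^{2H-2}$ are nonnegative, Fubini's theorem gives
\begin{equation*}
\mathrm{Tr}^j \psi_{m,k}(t_1,\ldots,t_{m-2j}) = \int_{[0,1]^k} \mathrm{Tr}^j\phi_{m+k}(t_1,\ldots,t_{m-2j},u_1,\ldots,u_k)\, du_1\cdots du_k.
\end{equation*}
This identity is the starting point.

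Next, I would compute $\|\mathrm{Tr}^j \psi_{m,k}\|_{|\cH|^{\otimes(m-2j)}}^2$ directly from \eqref{def:B}. Since $\mathrm{Tr}^j\phi_{m+k}\ge 0$, absolute value bars can be dropped, yielding
\begin{equation*}
\|\mathrm{Tr}^j \psi_{m,k}\|_{|\cH|^{\otimes(m-2j)}}^2 = \int \mathrm{Tr}^j\phi_{m+k}(t,u)\,\mathrm{Tr}^j\phi_{m+k}(t',u') \prod_{i=1}^{m-2j}|t_i-t'_i|^{2H-2}\, dt\, dt'\, du\, du',
\end{equation*}
while $\|\mathrm{Tr}^j \phi_{m+k}\|_{|\cH|^{\otimes(m+k-2j)}}^2$ has the same expression multiplied by the extra weight $\prod_{i=1}^{k}|u_i-u'_i|^{2H-2}$. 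The key observation, which is essentially the only nontrivial input for \eqref{e:compare}, is that for $u_i,u'_i\in[0,1]$ one has $|u_i-u'_i|\le 1$, and since $2H-2<0$, each such factor satisfies $|u_i-u'_i|^{2H-2}\ge 1$. Inserting this weight can only enlarge the (nonnegative) integrand, which yields the comparison \eqref{e:compare} immediately.

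Finally, to conclude the membership $\mathrm{Tr}^j\psi_{m,k}\in|\cH|^{\otimes(m-2j)}$ one still needs the right-hand side of \eqref{e:compare} to be finite, i.e.\ $\mathrm{Tr}^j\phi_{m+k}\in|\cH|^{\otimes(m+k-2j)}$. This is the main analytic obstacle and is where Assumption \ref{H2} enters. Using the explicit form \eqref{e:phi} of $\phi_{m+k}$ together with the kernel $K(s)=|s|^{2H-2}$, the inner $2j$--fold integration defining the trace produces products of beta-type integrals with exponents involving $\alpha$ and $H$; convergence of these integrals requires precisely $\alpha+2H>2$ (equivalently $\alpha>2(1-H)$), which is exactly \ref{H2}. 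The argument therefore reduces to a quantitative beta-integral estimate on the singular factors of $\phi_{m+k}$, which I expect to be carried out in the subsequent Lemma~\ref{Tr} (referenced in Remark~\ref{rem:highlight}) and then plugged in here to close the proof. Once that finiteness is in hand, \eqref{e:compare} gives the comparison and the membership simultaneously.
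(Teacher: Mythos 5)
Your derivation of the comparison \eqref{e:compare} is exactly the paper's argument: write $\mathrm{Tr}^j\psi_{m,k}$ as the partial integral of $\mathrm{Tr}^j\phi_{m+k}$ over the last $k$ variables (legitimate by Tonelli, since everything is nonnegative), expand the $|\cH|^{\otimes(m-2j)}$-norm, and observe that inserting the extra factors $\prod_{i=1}^{k}|u_i-u_i'|^{2H-2}\ge 1$ (valid because $u_i,u_i'\in[0,1]$ and $2H-2<0$) only increases the integrand. That part is complete and correct.

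The gap is in the finiteness of the right-hand side, i.e.\ the claim $\mathrm{Tr}^j\phi_{m+k}\in|\cH|^{\otimes(m+k-2j)}$, which you explicitly defer to ``a quantitative beta-integral estimate \ldots carried out in the subsequent Lemma~\ref{Tr}.'' This deferral does not work: Lemma~\ref{Tr} only proves the combinatorial bound $\bE[\bI_m(\psi_{m,k})^2]\le 2^m(m+1)!\,\mathrm{Tr}^m\psi_{2m,2k}$ and itself relies on the finiteness estimate established \emph{inside} the proof of the present lemma (estimate \eqref{e:est-trj}); importing it from there would be circular. Moreover, the bound is not a routine product of beta integrals: in $\Vert\mathrm{Tr}^j\phi_{m}\Vert^2_{|\cH|^{\otimes(m-2j)}}$ the kernels $|t_i-t_i'|^{2H-2}$ and $|s_{2i}-s_{2i-1}|^{2H-2}$ pair variables across the two copies of $\phi_m$ and across non-consecutive slots, so the integral does not factor. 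The paper's mechanism is to split $[0,1]^{2m}$ into the $(2m)!$ regions on which all $2m$ variables are ordered, and on each region to replace every factor $|x-y|^{2H-2}$ (negative exponent) by $|x^*-x^\star|^{2H-2}$, where $x^\star$ is the immediate predecessor of $\max\{x,y\}$ in the ordering; since the gap between consecutive order statistics is no larger than $|x-y|$, this dominates the integrand and reduces each region to the iterated Dirichlet integral of Lemma~\ref{lem:a direct calculate} with $m$ exponents equal to $\alpha-1$ and $m$ exponents equal to $2H+\alpha-3$, all exceeding $-1$ precisely under \ref{H2}. You correctly identified that $\alpha+2H>2$ is the needed condition, but without this ordering-and-kernel-modification step (or an equivalent decoupling) the membership half of the lemma remains unproven.
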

\begin{proof} We only prove the case $\alpha\in(0,1)$. When $k=0$, we have $\psi_{m,0}=\phi_m$ and thus
\vspace{-0.2cm}
\begin{equation*}
\begin{aligned}
\Vert \mathrm{Tr} ^j {\phi}_m&\Vert^2_{|\cH|^{\otimes (m-2j)}}=\int_{[0,1]^{2(m-2j)}}\prod_{i=1}^{m-2j}|t_i-t_i'|^{2H-2}\mathrm{d}t_{i}\mathrm{d}t_{i}'\bigg(\int_{[0,1]^{2j}}\phi_m(s_1,\cdots,s_{2j},t_1,\cdots,t_{m-2j})\\&\times\prod\limits_{i=1}^j\frac{\mathrm{d}s_{2i-1}\mathrm{d}s_{2i}}{|s_{2i}-s_{2i-1}|^{2-2H}}\int_{[0,1]^{2j}}\phi_m(s_1',\cdots,s_{2j}',t_1',\cdots,t_{m-2j}')\prod\limits_{i=1}^j\frac{\mathrm{d}s_{2i-1}'\mathrm{d}s_{2i}'}{|s_{2i}'-s_{2i-1}'|^{2-2H}}\bigg)
\end{aligned}
\end{equation*}

\vspace{-0.15cm}
First, we split $[0,1]^{2m}$ into $(2m)!$ regions such that in each region, the numbers $\{t_i,t_i',s_j,s_j': i=1,\dots, m-2j, j=1, \dots, 2j\}$ are listed in an increasing order.  To get an upper bound, we replace, for instance, each  $|t_i-t_i'|^{2H-2}$ by $|t_i^*-t_i^\star |^{2H-2}$  where $t^*_{i}:=\max\{t_{i}, t_{i}'\}$ and $t_{i}^\star$ is the largest number that is smaller than $t^*_{i}$ (it could be some $t_j$, $t'_j$, $s_j$ or $s'_j$), and we perform the same operation for all the other $|s_{2i}-s_{2i-1}|^{2H-2}$, $|s'_{2i}-s'_{2i-1}|^{2H-2}$ and the factors in $\phi_m$.  Then on each region, the above integral is smaller than an integral with $2m$ variables in Lemma~\ref{lem:a direct calculate} with $m$ exponents equal to $\al-1$ and $m$ exponents equal to $2H+\al-3$. Noting that $2H+\alpha-3>-1$ under \ref{H2},  we get by Lemma~\ref{lem:a direct calculate}
\vspace{-0.1cm}
    \begin{equation}\label{e:est-trj}
        \Vert \mathrm{Tr} ^j {\phi}_m\Vert^2_{|\cH|^{\otimes (m-2j)}}\leq(2m)!\frac{\Gamma(\al)^m\Gamma(\al+2H-2)^m}{\Gamma(m(2H+2\alpha-2)+1)}<\infty.
    \end{equation}

\vspace{-0.15cm}
When $k>0$, by denoting $\boldsymbol{s}:=(s_1,\cdots,s_{2j})$, $\boldsymbol{t}:=(t_1,\cdots,t_{m-2j})$, $\boldsymbol{x}:=(x_1,\cdots,x_k)$ and $\boldsymbol{s}', \boldsymbol{t}', \boldsymbol{x}'$ correspondingly, we have
\vspace{-0.25cm}
\begin{align*}
&\quad\Vert \mathrm{Tr} ^j {\psi}_{m,k}\Vert^2_{|\cH|^{\otimes (m-2j)}}=\int_{[0,1]^{2(m-2j)}}\prod_{i=1}^{m-2j}|t_i-t_i'|^{2H-2}\mathrm{d}t_{i}\mathrm{d}t_{i}'\bigg(\int_{[0,1]^{2j}}\psi_{m,k}(\boldsymbol{s},\boldsymbol{t})\\
&\qquad\qquad\qquad\qquad\qquad \times\int_{[0,1]^{2j}}\psi_{m,k}(\boldsymbol{s}',t_1',\boldsymbol{t}')\prod\limits_{i=1}^j\frac{\mathrm{d}s_{2i-1}'\mathrm{d}s_{2i}'}{|s_{2i}'-s_{2i-1}'|^{2-2H}}\bigg)\\
&=\int_{[0,1]^{2(m-2j)}}\prod_{i=1}^{m-2j}|t_i-t_i'|^{2H-2}\mathrm{d}t_{i}\mathrm{d}t_{i}' \bigg(\int_{[0,1]^{2j}}\int_{[0,1]^k}\phi_{m+k}(\boldsymbol{s},\boldsymbol{t},\boldsymbol{x})\mathrm{d}\boldsymbol{x}\prod\limits_{i=1}^j\frac{\mathrm{d}s_{2i-1}\mathrm{d}s_{2i}}{|s_{2i}-s_{2i-1}|^{2-2H}}\\
&\qquad\qquad\qquad\qquad\qquad\qquad\qquad \times \int_{[0,1]^{2j}}\int_{[0,1]^k}\phi_{m+k}(\boldsymbol{s}',\boldsymbol{t}',\boldsymbol{x}')\mathrm{d}\boldsymbol{x'} \prod\limits_{i=1}^j\frac{\mathrm{d}s_{2i-1}'\mathrm{d}s_{2i}'}{|s_{2i}'-s_{2i-1}'|^{2-2H}}\bigg)\\
& \leq \int_{[0,1]^{2(m+k-2j)}}\prod_{i=1}^{m-2j}|t_i-t_i'|^{2H-2}\mathrm{d}t_{i}\mathrm{d}t_{i}'\prod_{p=1}^{k}|x_p-x_p'|^{2H-2}\mathrm{d}x_p\mathrm{d}x_p'\bigg(\int_{[0,1]^{2j}}\phi_{m+k}(\boldsymbol{s},\boldsymbol{t},\boldsymbol{x})\\
&\quad \times\prod\limits_{i=1}^j\frac{\mathrm{d}s_{2i-1}\mathrm{d}s_{2i}}{|s_{2i}-s_{2i-1}|^{2-2H}}\int_{[0,1]^{2j}}\phi_{m+k}(\boldsymbol{s}',\boldsymbol{t}',\boldsymbol{x}')\prod\limits_{i=1}^j\frac{\mathrm{d}s_{2i-1}'\mathrm{d}s_{2i}'}{|s_{2i}'-s_{2i-1}'|^{2-2H}}\bigg)=\Vert\mathrm{Tr} ^j {\phi}_{m+k}\Vert^2_{|\cH|^{\otimes (m+k-2j)}}.
\end{align*}

\vspace{-0.15cm}
\noindent This proves \eqref{e:compare} and hence $\mathrm{Tr} ^j {\psi}_{m,k}\in|\cH|^{\otimes(m-2j)}$ from the first step. 
\end{proof}

\begin{remark}\label{psi-bdd}
Assuming  \ref{H2},  Lemma \ref{lem:Tr^j} yields  ${\interleave \psi_{m,k}\interleave_{m}<\infty}$ (see \eqref{def-sm}) and hence $\mathbb I_m(\psi_{m,k})$ is well defined due to Hu--Meyer formula \eqref{H-M}.  On the other hand, $\alpha+2H>2$ is the optimal condition for ${\interleave \psi_{m,k}\interleave_{m}<\infty}$. Indeed, for $k=0, m=2$ and $j=1$, we have
\vspace{-0.1cm}
$$\mathrm{Tr} ^1 \phi_{2}=2\int_{0<t_1<t_2<1}\frac{C_\al^2}{t_1^{1-\al}(t_2-t_1)^{1-\al}}(t_2-t_1)^{2H-2}dt_1dt_2,$$ 

\vspace{-0.15cm}
\noindent which is finite if and only if $\al+2H>2$.
\end{remark}

We have shown that $\mathbb I_m(\psi_{m,k})$ is well defined under \ref{H2}. In order to make sense of $Z_{\hat{\beta },\hat{h }}$,   we  impose the following condition to obtain the $L^2$-convergence  for the series on the right-hand side of \eqref{continuum}. 

\vspace{-0.2cm}
\begin{assumptionp}{(A3)}\label{H3}
Assume  $H\in(\frac12, 1)$ and $\alpha+H>\frac32$. 
\end{assumptionp}
\vspace{-0.2cm}

 Note that the Assumption \ref{H3} implies the Assumption \ref{H2} due to $H>\frac12$.

\begin{proposition}\label{c-converge'}
Assume \ref{H3}. The continuum partition function $Z_{\hat{\beta },\hat{h }}$ given in $\mathrm{(\ref{continuum}) }$  is an $L^2$ random variable, that is, the series in \eqref{continuum} converges in $L^2$.
\end{proposition}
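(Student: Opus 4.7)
The plan is to prove absolute $L^{2}$-convergence of the series in \eqref{continuum}. Using $\|\mathbb{I}_m(\psi_{m,k})\|_{L^2}=\vvvert \psi_{m,k}\vvvert_m$ from \eqref{def-sm} and the triangle inequality, it suffices to show
\[
\sum_{m,k\ge 0}\frac{\hat{\beta}^{m}\hat{h}^{k}}{m!\,k!}\,\vvvert \psi_{m,k}\vvvert_{m}<\infty.
\]
First I would bound each $\vvvert \psi_{m,k}\vvvert_m$ by combining Hu--Meyer's formula \eqref{H-M} (which expresses it as a sum over $j=0,\ldots,\lfloor m/2\rfloor$ of $\cH^{\otimes(m-2j)}$-norms of traces) with the estimates already in hand. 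Since $\phi_{m+k}$ is symmetric and nonnegative, $\psi_{m,k}$ and all its traces are too, so the $\cH$-norms coincide with the $|\cH|$-norms. Lemma~\ref{lem:Tr^j} followed by the bound \eqref{e:est-trj} (applied with $\phi_{m+k}$ in place of $\phi_m$) yields the $j$-free estimate
\[
\|\mathrm{Tr}^{j}\psi_{m,k}\|^{2}_{\cH^{\otimes(m-2j)}}\le (2(m+k))!\,\frac{C^{\,m+k}}{\Gamma\!\bigl(2(\alpha+H-1)(m+k)+1\bigr)}.
\]

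The combinatorial pre-factor in Hu--Meyer satisfies $\sum_{j=0}^{\lfloor m/2\rfloor}\frac{(m!)^{2}}{(m-2j)!(j!)^{2}4^{j}}\le m!\,2^{m}$ (via $\binom{2j}{j}\le 4^{j}$ and the binomial theorem), and $(2s)!\le 4^{s}(s!)^{2}$ with $s=m+k$ then gives
\[
\vvvert \psi_{m,k}\vvvert_{m}\le \sqrt{m!}\,\frac{C_{1}^{\,m+k}(m+k)!}{\sqrt{\Gamma(2(\alpha+H-1)(m+k)+1)}}.
\]
Rewriting $\tfrac{(m+k)!}{\sqrt{m!}\,k!}\le 2^{m+k}\sqrt{m!}$ using $\binom{m+k}{m}\le 2^{m+k}$, grouping terms by $s=m+k$, and bounding $\sum_{m+k=s}\hat{\beta}^{m}\hat{h}^{s-m}\sqrt{m!}\le(s+1)(\hat{\beta}\vee\hat{h})^{s}\sqrt{s!}$, the question reduces to convergence of
\[
\sum_{s\ge 0}C_{2}^{\,s}\,s\,\sqrt{s!\,/\,\Gamma(rs+1)},\qquad r:=2(\alpha+H-1).
\]

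A direct Stirling calculation gives $\log\sqrt{s!/\Gamma(rs+1)}=\tfrac{1}{2}(1-r)s\log s+O(s)$, which decays super-exponentially precisely when $r>1$, i.e., exactly under \ref{H3}. The critical matching of exponents at $\alpha+H=\tfrac{3}{2}$ is the only non-routine point: \ref{H2} alone yields $r>0$, enough for each individual $\mathbb{I}_{m}(\psi_{m,k})$ to be in $L^{2}$ (cf.\ Remark~\ref{psi-bdd}), but insufficient to offset the factorial blow-up $(m+k)!$ produced by Hu--Meyer. Wiener-chaos orthogonality is not available here since multiple Stratonovich integrals of different orders are generally not orthogonal, so the triangle-inequality step is unavoidable; fortunately it loses only a polynomial factor that is absorbed by the super-exponential decay.
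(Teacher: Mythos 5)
Your proposal is correct and follows essentially the same route as the paper: triangle inequality over the double series, Hu--Meyer to reduce to trace norms, the bounds \eqref{e:compare} and \eqref{e:est-trj} for those traces, and Stirling to identify $\alpha+H>\tfrac32$ as exactly the threshold for summability. The only (cosmetic) difference is that you apply the variance formula \eqref{def-sm} directly with the $j$-uniform trace bound, whereas the paper packages the same computation into Lemma \ref{Tr}, bounding $\bE[\mathbb{I}_m(\psi_{m,k})^2]$ by $2^m(m+1)!\,\mathrm{Tr}^m\psi_{2m,2k}$ before invoking \eqref{e:compare} and \eqref{e:est-trj}; both yield identical asymptotics.
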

\begin{proof}
  Recalling  $Z_{\hat{\beta },\hat{h }}$ from \eqref{continuum}, we have
\vspace{-0.2cm}
\begin{equation*}
    \begin{aligned}
\|Z_{\hat \beta, \hat h}\|_2&\le \sum_{m=0}^{\infty}\sum_{k=0}^{\infty}\frac{\hat{\beta }^{m}|\hat{h }|^{k}}{m!k!}\|\mathbb{I}_{m}(\psi_{m,k})\|_2\leq \sum_{m=0}^{\infty}\sum_{k=0}^{\infty}\frac{\hat{\beta }^{m}|\hat{h }|^{k}}{m!k!}\sqrt{2^m (m+1)!\mathrm{Tr} ^m{\psi}_{2m,2k}}\\&\leq \sum_{m=0}^{\infty}\sum_{k=0}^{\infty}\frac{C^{m+k}}{\sqrt{m!}k!}\sqrt{\frac{[2(m+k)]!}{\Gamma((m+k)(2H+2\alpha-2)+1)}}\, ,
    \end{aligned}
\end{equation*}

\vspace{-0.15cm}
\noindent where the second inequality follows from Lemma \ref{Tr} below and the last inequality is due to \eqref{e:compare}  and \eqref{e:est-trj} above.

Note that $\alpha+H>\frac32$ implies $\zeta=4-2H-2\alpha<1$. By Stirling's formula, we get
\vspace{-0.1cm}
\begin{equation*}
\frac{C^{m+k}}{\sqrt{m!}k!}\sqrt{\frac{[2(m+k)]!}{\Gamma((m+k)(2H+2\alpha-2))}}\leq\sqrt{\frac{C^{m+k}(m+k)^{\zeta(m+k)}}{m^m k^{2k}}}\leq \sqrt{C^{m+k}\left(\frac{m^{\zeta(m+k)}}{m^m k^{2k}}+\frac{k^{\zeta(m+k)}}{m^m k^{2k}}\right)}.
\end{equation*}

\vspace{-0.15cm}
\noindent This together with Lemma \ref{lem:series-mk} yields $\|Z_{\hat \beta, \hat h}\|_2<\infty$. 
\end{proof}

\begin{lemma}\label{Tr}
Recall $\phi$ in \eqref{e:phi} and $\psi$ in \eqref{e:psi}. We have
\vspace{-0.1cm}
   \begin{equation*}
       \bE[\bI_m(\phi_m)^2]\leq 2^m (m+1)!\mathrm{Tr} ^m{\phi}_{2m}\quad\text{and}\quad\bE[\bI_m(\psi_{m,k})^2]\leq 2^m (m+1)!\mathrm{Tr} ^m{\psi}_{2m,2k}.
   \end{equation*}

\vspace{-0.25cm}
   \end{lemma}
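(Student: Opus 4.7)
The key ingredient is a pointwise comparison between $\phi_m\otimes \phi_m$ and $\phi_{2m}$: for any subset $A\subset \{1,\dots, 2m\}$ with $|A|=m$ and any $(t_1,\dots,t_{2m})\in (0,1)^{2m}$,
\begin{equation}\label{e:plan-ptwise}
\phi_m(\{t_i\}_{i\in A})\,\phi_m(\{t_i\}_{i\in A^c}) \le \phi_{2m}(t_1,\dots,t_{2m}).
\end{equation}
By the symmetry of both sides, I may assume $0<t_1<\cdots<t_{2m}<1$. Then (for the case $0<\alpha<1$; the other cases are trivial since $\phi_m$ is constant) both sides equal $C_\alpha^{2m}$ times a product of $2m$ ``gap'' factors raised to $\alpha-1$. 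On the right the gaps are the consecutive increments $t_l-t_{l-1}$ ($l=1,\dots,2m$, with $t_0=0$), while on the left each gap $t_{i_k}-t_{i_{k-1}}$ (enumerating $A$ and $A^c$ in increasing order) is a \emph{sum} of consecutive right-hand gaps. Since $\alpha-1<0$, this sum raised to $\alpha-1$ is bounded by any single summand raised to $\alpha-1$. I then construct an explicit injection from the $2m$ left-hand factors to the $2m$ right-hand factors by matching the $A$-gap ending at $t_{i_k}$ to the consecutive interval $(t_{i_k-1},t_{i_k}]$, and analogously for $A^c$-gaps; the images are distinct because they are indexed by the disjoint sets $A$ and $A^c$. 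Taking the product over all factors yields \eqref{e:plan-ptwise}.

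With \eqref{e:plan-ptwise} in hand, I invoke the Hu--Meyer identity \eqref{def-sm} (valid by Remark~\ref{psi-bdd}) to write
\[
\bE[\bI_m(\phi_m)^2]=\sum_{j=0}^{\lfloor m/2\rfloor}\frac{(m!)^2}{(j!)^2(m-2j)!\,2^{2j}} \Vert \mathrm{Tr}^j \phi_m\Vert_{\cH^{\otimes(m-2j)}}^2.
\]
Expanding $\Vert \mathrm{Tr}^j \phi_m\Vert^2_{\cH^{\otimes(m-2j)}}$ as a $2m$-fold integral of $\phi_m\otimes\phi_m$ against a product of $m$ covariance kernels $K$ (consisting of $j$ pairs within each copy of $\phi_m$ and $m-2j$ cross pairs), applying \eqref{e:plan-ptwise}, and using the symmetry of $\phi_{2m}$ to relabel the $2m$ integration variables so that the pairings become $\{(u_{2i-1},u_{2i})\}_{i=1}^m$, I obtain
\[
\Vert\mathrm{Tr}^j\phi_m\Vert^2_{\cH^{\otimes(m-2j)}}\le \mathrm{Tr}^m\phi_{2m} \quad \text{for every } j=0,\dots,\lfloor m/2\rfloor.
\]

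It remains to sum the combinatorial coefficients. A direct count, equivalent to classifying perfect matchings on $2m$ points arranged in two rows of $m$ according to the number of intra-row edges, yields
\[
\sum_{j=0}^{\lfloor m/2\rfloor}\frac{(m!)^2}{(j!)^2(m-2j)!\,2^{2j}}=(2m-1)!!,
\]
which combined with the elementary inequality $(2m-1)!!\le 2^m (m+1)!$ (a consequence of $\binom{2m}{m}\le 4^m$) completes the first bound. The second bound for $\psi_{m,k}$ follows by the same route: applying \eqref{e:plan-ptwise} to $\phi_{m+k}$ on $2(m+k)$ arguments---the original $2m$ together with $2k$ auxiliary variables---and integrating out the auxiliary variables yields the pointwise inequality $\widehat{\psi_{m,k}\otimes\psi_{m,k}}\le \psi_{2m,2k}$, after which the remainder of the argument is verbatim. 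The main obstacle is the matching argument leading to \eqref{e:plan-ptwise}, which crucially exploits both the monotonicity of $(\cdot)^{\alpha-1}$ for $\alpha<1$ and the fact that $A$ and $A^c$ partition $\{1,\dots,2m\}$; everything else reduces to bookkeeping.
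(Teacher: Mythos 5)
Your proof is correct and follows essentially the same route as the paper's: Hu--Meyer's formula, the pointwise domination $\phi_m\otimes\phi_m\le\phi_{2m}$ (which you justify in more detail than the paper does, via the gap-matching injection), and a bound on the resulting combinatorial coefficients. The only cosmetic difference is that you evaluate the sum of Hu--Meyer coefficients exactly as $(2m-1)!!$ and then use $(2m-1)!!\le 2^m(m+1)!$, whereas the paper bounds each coefficient by $2^m m!$ and multiplies by the $m+1$ terms --- both yield the stated constant.
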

   
   \begin{proof} For the first inequality,   Hu--Meyer formula (\ref{H-M}) implies
\vspace{-0.1cm}
   \begin{equation}\label{1}
   \begin{aligned}
       \mathbb{E}[|\mathbb{I}_m(\phi_m)|^2]&=\sum_{k=0}^{[\frac{m}{2}]}\frac{1}{(m-2k)!}\left(\frac{m!}{k!2^k}\right)^2 \Vert \mathrm{Tr} ^k \phi_m\Vert^2_{|\cH|^{\otimes (m-2k)}}=\sum_{k=0}^{[\frac{m}{2}]}\frac{m!}{2^{2k}}\binom{m}{2k}\binom{2k}{k} \Vert \mathrm{Tr} ^k \phi_m\Vert^2_{|\cH|^{\otimes (m-2k)}}\\&\leq\sum_{k=0}^{[\frac{m}{2}]}\frac{m!}{2^{2k}} 2^{m+2k}\Vert \mathrm{Tr} ^k \phi_m\Vert^2_{|\cH|^{\otimes (m-2k)}}= 2^m m!\sum_{k=0}^{[\frac{m}{2}]}\Vert \mathrm{Tr} ^k \phi_m\Vert^2_{|\cH|^{\otimes (m-2k)}},
   \end{aligned}
   \end{equation}

\vspace{-0.15cm}
\noindent where the inequality follows from the fact $\binom{m}{n}\le 2^m$.  For each term on the right-hand side, we have
\vspace{-0.2cm}
 \begin{equation}\label{3}
        \begin{aligned}
            \Vert \mathrm{Tr} ^k {\phi}_m\Vert^2_{|\cH|^{\otimes (m-2k)}}
=&\int_{[0,1]^{2m}}\prod_{i=1}^{m-2k}|t_i-t_i'|^{2H-2}\prod_{j=1}^k|s_{2j}-s_{2j-1}|^{2H-2}|s_{2j}'-s_{2j-1}'|^{2H-2}\\&\times\quad\phi_m(s_1,\cdots,s_{2k},t_1,\cdots,t_{m-2k})\phi_m(s_1',\cdots,s_{2k}',t_1',\cdots,t_{m-2k}')\dd\boldsymbol{s}\dd\boldsymbol{s'}\dd\boldsymbol{t}\dd\boldsymbol{t'}\\ \le& \mathrm{Tr}^m {\phi}_{2m}.
        \end{aligned}
    \end{equation}
 For the last inequality above,  denoting $\boldsymbol{t}:=(t_1, \cdots, t_{m-2k})$, $\boldsymbol{s}:=(s_1,\cdots,s_{2k})$ and $\boldsymbol{t}', \boldsymbol{s}'$ correspondingly, we have that by the symmetry and the expression of $\phi$,
\vspace{-0.2cm}
\begin{equation*}
\phi_m(\boldsymbol{s},\boldsymbol{t})\phi_m(\boldsymbol{s}',\boldsymbol{t}')\leq\phi_{2m}(\boldsymbol{s},\boldsymbol{s}',t_1,t'_1,\cdots,t_{m-2k},t'_{m-2k}),
\end{equation*}

\vspace{-0.15cm}
\noindent since each subinterval among $(\boldsymbol{s},\boldsymbol{t})$ and $(\boldsymbol{s}',\boldsymbol{t}')$ is shortened by the interlacement between $(\boldsymbol{s},\boldsymbol{t})$ and $(\boldsymbol{s}',\boldsymbol{t}')$. Also note that the kernel $K(\cdot)$ in the trace \eqref{e:trace} is not influenced by the permutation in $\phi$. Then substituting \eqref{3}  into (\ref{1}) yields the desired inequality.

Similar to the above, noting that
\vspace{-0.1cm}
    \begin{equation}\nonumber
    \begin{aligned}
       \psi_{m,k}(\boldsymbol{s},\boldsymbol{t})\psi_{m,k}(\boldsymbol{s}',\boldsymbol{t}')\leq\psi_{2m,2k}(\boldsymbol{s},\boldsymbol{s}', t_1,t_1',\cdots,t_{m-2k},t_{m-2k}'),
    \end{aligned}
\end{equation}

\vspace{-0.15cm}
\noindent we can prove the second inequality in the same way. 
\end{proof}

\subsection{Weak convergence of the rescaled partition functions.} First, note that Taylor expansion of the partition function $Z_N:=Z_{N,\beta_N}^{\omega,h_N}$ given in \eqref{e:partition} yields
\vspace{-0.1cm}
\begin{equation}\label{discrete}
\begin{aligned}
Z_N&=\mathbb{E}_{\tau}\bigg[\Big(\sum_{k=0}^{\infty}\frac{1}{k!}\Big(\sum_{n=1}^Nh_N \delta_{\frac{n}{N}}\Big)^k\Big)\Big(\sum_{m=0}^{\infty}\frac{1}{m!}\Big(\sum_{n=1}^N\beta_N\omega_n \delta_{\frac{n}{N}}\Big)^m\Big)\bigg]
\\&=\sum_{m=0}^{\infty}\sum_{k=0}^{\infty}\frac{1}{k!m!}\sum_{ \boldsymbol{n}\in\llbracket N \rrbracket^{m+k}}\beta_N^m h_N^k\mathbb{P}_{\tau}(n_1,\cdots,n_m,\cdots,n_{m+k}\in\tau)\prod_{i=1}^m\omega_{n_i}=:\sum_{m=0}^{\infty}\sum_{k=0}^{\infty}\frac{1}{k!m!}\mathbb{S}_{m,k}^{(N)}.
\end{aligned}
\end{equation}

\vspace{-0.15cm}
We then prove the weak convergence of each chaos.
\begin{proposition}\label{prop-S-I}
 Assume \ref{H2}. Then, for each $m,k\in\bN_0$,
 \vspace{-0.1cm}
\begin{equation}
\mathbb{S}_{m,k}^{(N)}\overset{(d)}{\longrightarrow}\hat{\beta }^{m}\hat{h }^{k}\mathbb{I}_{m}(\psi_{m,k}),\quad\text{as}~N\rightarrow\infty,
\label{prop-4}
\end{equation}

\vspace{-0.15cm}
\noindent where $\psi_{m,k}$ is defined in~(\ref{e:psi}) and $ \mathbb{I}_{m}(\psi_{m,k})$ is an $m$-th multiple Stratonovich integral. Moreover, for any $p\in\bN$\ and $l_{1},\cdots,l_{p},k_{1},\cdots,k_{p}\in\bN_0$, we have the joint convergence in distribution
\vspace{-0.1cm}
\begin{equation}
\left( \mathbb{S}_{l_1,k_{1}}^{(N)},\cdots, \mathbb{S}_{l_p,k_{p}}^{(N)})\right)\overset{(d)}{\longrightarrow}\left(\hat{\beta}^{l_1} \hat{h }^{k_{1}}\mathbb{I}_{l_1}(\psi_{l_{1},k_{1}}),\cdots,\hat{\beta}^{l_p} \hat{h }^{k_{p}}\mathbb{I}_{l_p}(\psi_{l_{p},k_{p}})\right),\quad\text{as}~N\rightarrow\infty.
\label{prop-4-multi}
\end{equation}

\vspace{-0.3cm}
\end{proposition}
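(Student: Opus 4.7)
The plan is to reduce \eqref{prop-4} to the Wiener chaos joint convergence of Proposition \ref{prop:2-joint} via a discrete analog of Hu-Meyer's formula \eqref{H-M}. Applying Wick's theorem to each Gaussian monomial gives
$$\prod_{i=1}^m \omega_{n_i} = \sum_{j=0}^{\lfloor m/2 \rfloor}\; \sum_{\pi \in \Pi_{m,j}} \prod_{\{p,q\}\in\pi} \gamma(n_p - n_q)\; \omega_{\boldsymbol{n}_{\pi^c}}^\diamond,$$
where $\Pi_{m,j}$ is the set of partial matchings of $\llbracket m\rrbracket$ with exactly $j$ pairs and $\boldsymbol{n}_{\pi^c}$ lists the unpaired indices. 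Substituting into \eqref{discrete} and using the symmetry of $\bP_\tau(\boldsymbol{n}\in\tau)$ under permutations of the entries of $\boldsymbol n$, the sum over $\pi$ collapses with multinomial coefficient $\frac{m!}{j!(m-2j)!\,2^j}$, yielding
$$\mathbb{S}_{m,k}^{(N)} = \hat\beta^m \hat h^k \sum_{j=0}^{\lfloor m/2 \rfloor} \frac{m!}{j!(m-2j)!\,2^j}\; \I_{m-2j}^{(N)}\bigl(F_{m,k,j}^{(N)}\bigr),$$
where $F_{m,k,j}^{(N)}$ is a kernel of order $m-2j$ obtained from $f_{m,k}^{(N)}$ (extended to carry $j$ extra paired indices) by contracting $j$ pairs of arguments against the rescaled covariance $\gamma_N$ from \eqref{def:gamma_N}.

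The next step is to show that $F_{m,k,j}^{(N)} \to \mathrm{Tr}^j \psi_{m,k}$ in $|\cH|^{\otimes(m-2j)}$ as $N\to\infty$. Since $\gamma_N(t) \to K(t) = |t|^{2H-2}$ pointwise and $\gamma_N(t) \lesssim |t|^{2H-2}$, the discrete contraction is precisely a Riemann sum for the iterated integral defining $\mathrm{Tr}^j$. Combining the pointwise convergence of the normalized renewal kernel to $\phi_{m+k}$ (see \eqref{local} and the argument in Proposition \ref{prop:weak-con-Sk}) with the envelope \eqref{ine:g-phi}, dominated convergence gives $F_{m,k,j}^{(N)} \to \mathrm{Tr}^j \psi_{m,k}$ in $L^{1/H}$; Lemma \ref{lem:1/H norm} then promotes this to convergence in $|\cH|^{\otimes(m-2j)}$. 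The finiteness of the limiting trace is ensured by Lemma \ref{lem:Tr^j}, which uses Assumption \ref{H2}. With this kernel convergence in hand, Lemma \ref{lem:isometry} together with Proposition \ref{prop:2-joint} delivers the joint weak convergence
$$\bigl(\I_{m-2j}^{(N)}(F_{m,k,j}^{(N)})\bigr)_{j=0}^{\lfloor m/2\rfloor} \overset{(d)}{\longrightarrow} \bigl(\I_{m-2j}(\mathrm{Tr}^j \psi_{m,k})\bigr)_{j=0}^{\lfloor m/2\rfloor}.$$
Summing over $j$ via the continuous mapping theorem and invoking Hu-Meyer's formula \eqref{H-M} identifies the limit as $\hat\beta^m \hat h^k\, \mathbb{I}_m(\psi_{m,k})$, proving \eqref{prop-4}. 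The joint statement \eqref{prop-4-multi} follows by applying the expansion simultaneously to each $\mathbb{S}_{l_i,k_i}^{(N)}$: the resulting vector is a fixed polynomial in a finite family of $\I_\ell^{(N)}(\cdot)$'s, whose joint weak convergence is again provided by Proposition \ref{prop:2-joint} combined with the Cram\'er-Wold device.

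The main technical obstacle is Step 1: identifying the discrete trace kernel $F_{m,k,j}^{(N)}$ unambiguously and verifying its $|\cH|^{\otimes(m-2j)}$-convergence to $\mathrm{Tr}^j \psi_{m,k}$. This is precisely where Assumption \ref{H2} ($\alpha + 2H > 2$) enters in an essential way: by Remark \ref{psi-bdd} it is the borderline condition ensuring that all traces $\mathrm{Tr}^j \psi_{m,k}$ are finite, so that both Hu-Meyer's formula is applicable and the envelope arising from \eqref{ine:g-phi} belongs to the correct function space, closing the dominated convergence argument.
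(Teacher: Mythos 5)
Your proposal is correct and follows essentially the same route as the paper: expand the ordinary Gaussian product into Wick products against partial matchings (the paper does this in two steps via Lemma \ref{ord_prod_wick_prod} followed by Isserlis' theorem on the paired block, arriving at the same multiplicity $\frac{m!}{j!(m-2j)!2^j}$), show the discrete trace kernels converge to $\mathrm{Tr}^j\psi_{m,k}$ in $L^{1/H}$ by dominated convergence with the envelope \eqref{ine:g-phi}, and identify the limit through Hu--Meyer's formula together with Proposition \ref{prop:2-joint}. You also correctly pinpoint Assumption \ref{H2} as the condition making the limiting traces finite, which is exactly where the paper uses it.
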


\begin{proof}
We first prove $\mathrm{(\ref{prop-4}) }$. Recalling the definition of $\mathbb{S}_{m,k}^{(N)}$ in \eqref{discrete},  we get   by Lemma~\ref{ord_prod_wick_prod},\begin{equation*}
\begin{aligned}
\mathbb{S}_{m,k}^{(N)}&=\sum_{ \boldsymbol{n}\in\llbracket N \rrbracket^{m+k}}\beta_N^m h_N^k\mathbb{P}_{\tau}(n_1,\cdots,n_{m+k}\in\tau)\sum_{j=0}^{\lfloor\frac{m}{2}\rfloor}\sum_{B\subset\llbracket m \rrbracket,~|B|=m-2j}\omega_{\boldsymbol{n}_B}^\diamond\mathbb{E}[\omega_{\boldsymbol{n}_{\llbracket m \rrbracket\backslash B}}]
\\&=\sum_{j=0}^{\lfloor\frac{m}{2}\rfloor}\sum_{B\subset\llbracket m \rrbracket,~|B|=m-2j}\beta_N^m h_N^k\sum_{ \boldsymbol{n}\in\llbracket N \rrbracket^{m+k}}\mathbb{P}_{\tau}(n_1,\cdots,n_{m+k}\in\tau)\omega_{\boldsymbol{n}_B}^\diamond\mathbb{E}[\omega_{\boldsymbol{n}_{\llbracket m \rrbracket\backslash B}}].
\end{aligned}
\end{equation*}

For each $j=0, 1,\cdots,\lfloor\frac{m}{2}\rfloor$, all the terms in the summation $\sum_{\substack{B\subset\llbracket m \rrbracket, |B|=m-2j}}$ are equal by symmetry. Fix $j\in\{0,1,...,\lfloor\frac{m}{2}\rfloor\}$ and note that there are in total $\binom{m}{m-2j}$ subsets of $\llbracket m \rrbracket$ with cardinality $m-2j$. Without loss of generality, we may assume $B=\{2j+1,...,m\}$. By the Wick theorem (see, e.g., \cite[Theorem 1.28]{J97}), we have
\vspace{-0.1cm}
\[\mathbb{E}[\omega_{\boldsymbol{n}_{\llbracket m \rrbracket\backslash B}}]=\mathbb{E}[\omega_{\boldsymbol{n}_{\llbracket 2j \rrbracket}}]=\sum_V\prod_{\{l_1,l_2\}\in V}\mathbb{E}[\omega_{n_{l_1}}\omega_{n_{l_2}}],\]

\vspace{-0.15cm}
\noindent where the sum $\sum_V$ is taken over all pair partitions $V$ of $\llbracket 2j \rrbracket$. By symmetry, the summations
\vspace{-0.1cm}
\[\sum_{ \boldsymbol{n}\in\llbracket N \rrbracket^{m+k}}\omega_{\boldsymbol{n}_B}^\diamond\prod_{\{l_1,l_2\}\in V}\mathbb{E}[\omega_{n_{l_1}}\omega_{n_{l_2}}]\mathbb{P}_{\tau}(n_1,\cdots,n_{m+k}\in\tau)\]

\vspace{-0.15cm}
\noindent equal to each other for all pair partitions $V$. Since there are in total $(2j-1)!!$ pair partitions of $\llbracket 2j \rrbracket$,
\vspace{-0.1cm}
\begin{equation}
\begin{aligned}
\mathbb{S}_{m,k}^{(N)}&=\sum_{j=0}^{\lfloor\frac{m}{2}\rfloor} \binom{m}{m-2j} (2j-1)!!\beta_N^m h_N^k\sum_{ \boldsymbol{n}\in\llbracket N \rrbracket^{m+k}}\omega_{\boldsymbol{n}_B}^\diamond\prod_{l=1}^{j}\mathbb{E}[\omega_{n_{2l-1}}\omega_{n_{2l}}]\mathbb{P}_{\tau}(n_1,\cdots,n_{m+k}\in\tau)
\\&=\sum_{j=0}^{[\frac{m}{2}]}\frac{m!}{j!(m-2j)!2^j}\beta_N^m h_N^k\sum_{ \boldsymbol{n}\in\llbracket N \rrbracket^{m+k}}\omega_{\boldsymbol{n}_B}^\diamond\prod_{l=1}^{j}\mathbb{E}[\omega_{n_{2l-1}}\omega_{n_{2l}}]\mathbb{P}_{\tau}(n_1,\cdots,n_{m+k}\in\tau).
\label{Smk}
\end{aligned}
\end{equation}

\vspace{-0.15cm}
Compare (\ref{Smk}) with Hu--Meyer formula (\ref{H-M}) and note that $\mathrm{Tr}^{j}\psi_{m,k}\in|\cH|^{\otimes(m-2j)}$ for $k=0,1,...,\lfloor\frac{m}{2}\rfloor$ by Lemma $\mathrm{ \ref{lem:Tr^j}}$. From now on, without loss of generality, we assume that $m$ is odd and the case of even $m$ can be treated in the same manner. By Proposition \ref{prop:2-joint}, we have
\vspace{-0.1cm}
\begin{equation}
\begin{aligned}
\left(\I_{1}^{(N)}(\mathrm{Tr}^{\frac{m-1}{2}}\psi_{m,k}),\cdots,\I_{m}^{(N)}(\psi_{m,k})\right)
\overset{(d)}{\longrightarrow}\left(\I_{1}(\mathrm{Tr}^{\frac{m-1}{2}}\psi_{m,k}),\cdots,\I_{m}(\psi_{m,k})\right),\quad\text{as}~N\rightarrow\infty.
\label{prop-2-multi}
\end{aligned}
\end{equation}

\vspace{-0.15cm}
\noindent Thus, in order to prove (\ref{prop-4}), it suffices to show that for all $j=0,1,\cdots,\frac{m-1}{2}$, 
\vspace{-0.1cm}
\begin{equation}\label{e:Y20}
\begin{aligned}
Y_N:&=\frac{L(N)^{m+k}}{N^{m(\alpha-1+H)+\alpha k}}\sum_{\boldsymbol{n}\in \llbracket N \rrbracket^{m+k}}\omega_{\boldsymbol{n}_B}^\diamond\prod_{l=1}^{j}\mathbb{E}[\omega_{n_{2l-1}}\omega_{n_{2l}}]\mathbb{P}_{\tau}(n_1,\cdots,n_{m+k}\in\tau)-\I_{m-2j}^{(N)}(\mathrm{Tr}^j\psi_{m,k})\\
&=\I_{m-2j}^{(N)}(f_{\mathrm{Tr}^j,m,k}^{(N)})-\I_{m-2j}^{(N)}(\mathrm{Tr}^j\psi_{m,k})\overset{L^2}{\longrightarrow}0\quad\text{as}~N\to \infty,
\end{aligned}
\end{equation}
\vspace{-0.15cm}
\noindent with
\vspace{-0.1cm}
\begin{equation}
\begin{aligned}
&f_{\mathrm{Tr}^j,m,k}^{(N)}=f_{\mathrm{Tr}^j,m,k}^{(N)}(t_{1},\cdots,t_{m-2j})
\\
&:=N^{-2jH}N^{-k}\frac{L(N)^{m+k}}{(N^{\alpha-1})^{m+k}}\!\!\sum_{\boldsymbol{u}\in\llbracket N \rrbracket^{2j}}\sum_{\boldsymbol{q}\in\llbracket N \rrbracket^{m-2j}}\sum_{\boldsymbol{s}\in\llbracket N \rrbracket^{k}}\prod_{l=1}^{j}\mathbb{E}[\omega_{n_{2l-1}}\omega_{n_{2l}}]P_{\tau}(\boldsymbol{u},\boldsymbol{q},\boldsymbol{s})\prod_{i=1}^{m-2j}\mathbf1_{(\frac{q_{i}-1}{N},\frac{q_{i}}{N}]}(t_{i}),
\end{aligned}
\end{equation}

\vspace{-0.15cm}
\noindent where we recall that $\mathbf I_{m-2j}^{(N)}$ is defined in \eqref{e:I(f)},  $P_{\tau}(\boldsymbol{u},\boldsymbol{q},\boldsymbol{s})$ is defined in \eqref{def:P_tau}. 

For $j=0$, we have $Y_N=\I_{m}^{(N)}(f_{m,k}^{(N)}-\psi_{m,k})$, which converges to $0$ in $L^2$ as $N\to\infty$ as has been proved in the proof of  Proposition \ref{prop:weak-con-Sk}. For $j=1,...,\frac{m-1}{2}$, by 
Lemma \ref{lem:isometry} and Lemma \ref{lem:1/H norm}, we get 
\vspace{-0.1cm}
\begin{equation}
\begin{aligned}
\mathbb{E}[Y_N^2]&\lesssim (m-2j)!\|f_{\mathrm{Tr}^j,m,k}^{(N)}-\mathrm{Tr}^j\psi_{m,k}\|_{|\cH|^{\otimes(m-2j)}}^2\\
&\lesssim(m-2j)!\int_{[0,1]^{2(m-2j)}}\prod_{i=1}^{m-2j}|t_i-t'_i|^{2H-2}|\boldsymbol{D}_N \boldsymbol{D}'_N|\dd\boldsymbol{t}\dd\boldsymbol{t}'\lesssim (m-2j)!\|\boldsymbol{D}_N\|_{\frac{1}{H}}^2
\label{Y-bdd}
\end{aligned}
\end{equation}

\vspace{-0.15cm}
\noindent where
\vspace{-0.1cm}
\begin{equation}
\begin{aligned}
\boldsymbol{D}_N=f_{\mathrm{Tr}^j,m,k}^{(N)}-\mathrm{Tr}^j\psi_{m,k}\quad\text{and}\quad{\boldsymbol{D}}'_N=\boldsymbol{D}_N(\boldsymbol{t}').
\end{aligned}
\end{equation}

\vspace{-0.15cm}
Then, in order to prove \eqref{e:Y20}, it suffices to show 
\vspace{-0.1cm}
\begin{equation}\label{e:f2p}
f_{\mathrm{Tr}^j,m,k}^{(N)}\xrightarrow{L^{1/H}}\mathrm{Tr}^j\psi_{m,k}\quad\text{ as } N\to\infty.
\end{equation}

\vspace{-0.15cm}
\noindent We denote
\begin{align*}
&g_{\mathrm{Tr}^j,m,k}^{(N)}(t_1,\cdots,t_{m+k})\\
&:=\frac{L(N)^{m+k}}{(N^{\alpha-1})^{m+k}}\sum_{\boldsymbol{n}\in\llbracket N \rrbracket^{m+k}}\left(\prod_{l=1}^{j}\gamma_N\left(\frac{n_{2l-1}}{N}-\frac{n_{2l}}{N}\right)\bP_{\tau}\left(n_1,\cdots ,n_{m+k}\in\tau\right)\prod_{i=1}^{m+k}\I_{(\frac{n_{i}-1}{N},\frac{n_{i}}{N}]}(t_{i})\right),
\end{align*}

\vspace{-0.15cm}
where $\gamma_{N}(t)$ is defined in \eqref{def:gamma_N}, and denote
\vspace{-0.1cm}
\begin{align*}
&\quad\quad\phi_{\mathrm{Tr}^j,m+k}(t_1,\cdots,t_{m+k}):=\prod_{l=1}^{j}|t_{2j-1}-t_{2j}|^{2H-2}\phi_{m+k}(t_1,\cdots,t_{m+k}).
\end{align*}

\vspace{-0.15cm}
\noindent Then we have
\vspace{-0.1cm}
\begin{equation*}
\begin{aligned}
f_{\mathrm{Tr}^j,m,k}^{(N)}(t_{1},\cdots,t_{m-2j})&=\int_{[0,1]^{2j+k}}g_{\mathrm{Tr}^j,m,k}^{(N)}(t_1,\cdots,t_{m+k})\dd t_1\cdots\dd t_{2j}\dd t_{m+1}\cdots\dd t_{m+k}, \\
\mathrm{Tr}^j\psi_{m,k}(t_{1},\cdots,t_{m-2j})&=\int_{[0,1]^{2j+k}}\phi_{\mathrm{Tr}^j,m+k}(t_1,\cdots,t_{m+k})\dd t_1\cdots\dd t_{2j}\dd t_{m+1}\cdots\dd t_{m+k}.
\end{aligned}
\end{equation*}

\vspace{-0.15cm}
To prove \eqref{e:f2p}, by Jensen's inequality, it suffices to show $g_{\mathrm{Tr}^j,m,k}^{(N)}\xrightarrow{L^{1/H}}\phi_{\mathrm{Tr}^j,m+k}$, as $N\to\infty$. First, by (\ref{renewl theorem}) and (\ref{con-gamma'}), we have $g_{\mathrm{Tr}^j,m,k}^{(N)}\to \phi_{\mathrm{Tr}^j,m+k}$ pointwisely as $N\to\infty$. By Lemma ~\ref{lem:Tr^j}, $\phi_{\mathrm{Tr}^j,m+k}\in L^{1/H}$. Hence, it suffices to exhibit an $L^{1/H}$ domination of $\big\{g_{\mathrm{Tr}^j,m,k}^{(N)}\big\}_{N\geq 1}$. By \eqref{local} and \eqref{Karamata}, and
together with \eqref{con-gamma} and \eqref{con-gamma'}, we have that for some $\alpha'\in(0,\alpha)$,
\vspace{-0.1cm}
\begin{equation}\label{1/H-domination}
 g_{\mathrm{Tr}^j,m,k}^{(N)}(t_1,\cdots,t_{m+k})\leq C^{m+k}\prod_{l=1}^{j}|t_{2l-1}-t_{2l}|^{2H-2}\big(\phi_{m+k}(t_1,\cdots,t_{m+k})\big)^{\frac{1-\al'}{1-\al}}.
\end{equation}

\vspace{-0.15cm}
\noindent By a similar argument used in  the proof of  Lemma \ref{lem:Tr^j}, we can show that the right-hand side of (\ref{1/H-domination}) belongs to $L^{1/H}$. This provides the desired $L^{1/H}$ domination.

Finally, the proof is completed by the continuous mapping theorem  and \eqref{prop-2-multi}.
\end{proof}

\subsection{\texorpdfstring{$L^1$}--bound of the rescaled partition functions}\label{Lp_bound}

In this subsection, we prove the uniform $L^1$-boundedness, under Assumption \ref{H3},  for the partition function $Z_N$. Recall its Taylor expansion \eqref{discrete} and by the  Cauchy-Schwarz inequality, we have that
\vspace{-0.1cm}
\begin{equation}\label{eq:L1_ZN}
\begin{split}
\bE[Z_N]\leq\sum\limits_{k=0}^\infty\sum\limits_{m=0}^\infty\frac{1}{k!}\frac{1}{m!}\bE_\tau\Big[\Big(\sum\limits_{n=1}^N h_N \delta_{\frac{n}{N}}\Big)^{2k}\Big]^{\frac12}\bE\Big[\Big(\sum\limits_{n=1}^N\beta_N\omega_n\delta_{\frac{n}{N}}\Big)^{2m}\Big]^{\frac12}.
\end{split}
\end{equation}

\vspace{-0.15cm}
Using the same argument as in \eqref{e:exp-h}, we get that for some $\alpha'\in(0,\alpha)$,
\vspace{-0.1cm}
\begin{equation}\label{eq:h}
\sum\limits_{k=0}^\infty\frac{1}{k!}\bE_\tau\Big[\Big( \sum\limits_{n=1}^N h_N\delta_{\frac{n}{N}}\Big)^{2k}\Big]^{\frac12}\leq\sum\limits_{k=0}^\infty\frac{C^k}{k!}\sqrt{\frac{(2k)!}{\Gamma(2k\alpha')}}\lesssim\sum\limits_{k=0}^\infty\frac{C^k}{k^{k\alpha'}}<\infty.
\end{equation}
\vspace{-0.15cm}

For each term in the summation over $m$ in \eqref{eq:L1_ZN}, we have that
\vspace{-0.1cm}
\begin{equation}\label{eq:chaos_w}
\begin{split}
\bE\Big[\Big(\sum\limits_{n=1}^N\beta_N\omega_n\delta_{\frac{n}{N}}\Big)^{2m}\Big]^{\frac12}=\Big(\sum\limits_{\boldsymbol{n}\in\llbracket N\rrbracket^{2m}}\beta_N^{2m}\bP_\tau(n_1,\cdots,n_{2m}\in\tau)\bE_\omega[\omega_{\boldsymbol{n}_{\llbracket m\rrbracket}}]\Big)^{\frac12}.
\end{split}
\end{equation}

\vspace{-0.15cm}
\noindent Similar to the proof of Proposition \ref{prop-S-I}, $\bE[\omega_{\boldsymbol{n}_{\llbracket m\rrbracket}}]=\sum_V\prod_{\{l_1,l_2\}\in V}\bE[\omega_{l_1}\omega_{l_2}]$, where $V$ is all pair partitions of $\llbracket 2m\rrbracket$. Note that $|V|=(2m-1)!!$. By \eqref{local} and  \eqref{Karamata}, a Riemann sum approximation and the kernel modification procedure in the proof of Lemma \ref{lem:Tr^j}, \eqref{eq:chaos_w} is bounded above by
\vspace{-0.1cm}
\begin{equation}
\sqrt{\frac{C^{2m}(2m)!(2m-1)!!}{\Gamma(m(2\alpha'+2H-2))}}\lesssim C^m m^m\sqrt{\frac{m}{m^{m(2\alpha'-2H-3)}}}
\end{equation}

\vspace{-0.15cm}
\noindent for some $\alpha'<\alpha$ and $\alpha'+H>\frac32$. Hence, we have that
\vspace{-0.2cm}
\begin{equation}\label{eq:w}
\sum\limits_{m=0}^\infty\frac{1}{m!}\bE\Big[\Big(\sum\limits_{n=1}^N\beta_N\omega_n \delta_{\frac{n}{N}}\Big)^{2m}\Big]^{\frac12}\leq\sum\limits_{m=0}^\infty\frac{C^m}{m^{m(\alpha'+H-\frac32)}}<\infty. 
\end{equation}

\vspace{-0.15cm}
Combining \eqref{eq:L1_ZN}, \eqref{eq:h} and \eqref{eq:w} we conclude the $L^1$-boundedness of $Z_N$ and we can also deduce from the proof that
\vspace{-0.2cm}
\begin{equation}\label{sumM-0}
\begin{aligned}
\lim_{M\to\infty}\sup_{N\in\mathbb{N}}\sum_{l=M}^{\infty}\sum_{m+k=l}\frac{1}{k!m!}\mathbb{E}\Big[\Big|h_N\sum_{n=1}^N \delta_{\frac{n}{N}}\Big|^{k}\Big|\beta_N\sum_{n=1}^N\omega_{n} \delta_{\frac{n}{N}}\Big|^{m}\Big]=0.
\end{aligned}
\end{equation}

\vspace{-0.15cm}
Now we are ready to prove our second main result. 
\begin{proof}[Proof of Theorem \ref{thm2}]
By Proposition \ref{prop-S-I}, we have that for all $R\in \bN$,
\vspace{-0.1cm}
\begin{equation*}
Z^{(R)}_{N}=\sum_{l=0}^{R}\sum_{m+k=l}\frac{1}{k!m!}\mathbb{S}_{m,k}^{(N)}\overset{(d)}{\longrightarrow}Z^{(R)}:=\sum_{l=0}^{R}\sum_{m+k=l}\frac{\hat{\beta }^{m}\hat{h }^{k}}{m!k!}\mathbb{I}_{m}(\psi_{m,k}),\quad\text{as}~N\to\infty.
\end{equation*}

\vspace{-0.15cm}
By Proposition \ref{c-converge'}, $Z^{(R)}\overset{L^2}{\longrightarrow}Z$ as $N\to\infty$. By Lemma \ref{lem:weak-con}, we only need to show that $Z^{(R)}_{N}$ converges to $Z_N$ (see \eqref{discrete}) in probability uniformly in $N$ as $R\to\infty$. This follows from \eqref{sumM-0}.
\end{proof}

\begin{appendix}
\section{Some miscellaneous results}\label{app:A}

The following two lemmas about the Wick product can be found in \cite[Corollary 3.17, Theorem 3.9]{J97}. For Lemma \ref{ord_prod_wick_prod}, see also \cite[equation (A.5)]{rsw24}.
\vspace{-0.1cm}
\begin{lemma}\label{ord_prod_wick_prod}
Let $(\omega_n)_{n\in\bN}$ be Gaussian. Then
\[\omega_{\boldsymbol{n}_{\llbracket m\rrbracket}}=\sum_{j=0}^{[\frac{m}{2}]}\sum_{B\subset\llbracket m \rrbracket, ~|B|=m-2j}\omega_{\boldsymbol{n}_B}^\diamond\mathbb{E}[\omega_{\boldsymbol{n}_{\llbracket m \rrbracket\backslash B}}],\]
where $\omega_{\boldsymbol{n}_{\llbracket m\rrbracket}}:=\prod_{i=1}^m\omega_{n_i}$ is an ordinary product  and $\omega_{\boldsymbol{n}_B}^\diamond:=\prod_{n_i\in B}\diamond\omega_{n_i}$ is a Wick product. 
\end{lemma}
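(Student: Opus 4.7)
The plan is to prove the identity by induction on $m$, using a standard Hermite-type recursion for Wick products of jointly Gaussian variables. The base case $m=1$ is immediate, since the only nonzero term on the right-hand side corresponds to $j=0$, $B=\llbracket 1\rrbracket$, giving $\omega^\diamond_{\{n_1\}}\cdot 1=\omega_{n_1}$.

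For the inductive step, the key ingredient will be the elementary product rule
\[
\omega^\diamond_{\boldsymbol{n}_B}\cdot\omega_{n_m}=\omega^\diamond_{\boldsymbol{n}_{B\cup\{m\}}}+\sum_{i\in B}\mathbb{E}[\omega_{n_i}\omega_{n_m}]\,\omega^\diamond_{\boldsymbol{n}_{B\setminus\{i\}}},
\]
the multivariate analogue of the Hermite recursion $xH_k(x)=H_{k+1}(x)+kH_{k-1}(x)$. This follows by identifying Wick products with multiple Wiener integrals, $\omega^\diamond_{\boldsymbol{n}_B}=\mathbf{I}_{|B|}\bigl(\bigotimes_{i\in B}e_{n_i}\bigr)$, where $e_{n_i}$ denotes the covariance representer of $\omega_{n_i}$ in the Gaussian Hilbert space spanned by $\{\omega_n\}$, combined with the standard multiplication formula between a first-chaos variable and a higher-chaos element (see, e.g., \cite{Nua06}).

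Applying the inductive hypothesis to $\omega_{n_1}\cdots\omega_{n_{m-1}}$, multiplying by $\omega_{n_m}$, and invoking the above recursion on each factor $\omega^\diamond_{\boldsymbol{n}_B}\cdot\omega_{n_m}$ produces a sum of terms of the form $\omega^\diamond_{\boldsymbol{n}_{B'}}$ weighted by products of covariances $\mathbb{E}[\omega_{n_a}\omega_{n_b}]$ with indices in $\llbracket m\rrbracket\setminus B'$. Collecting contributions for each fixed $B'\subset\llbracket m\rrbracket$ (splitting into the two cases $m\in B'$ and $m\notin B'$) and using the recursive form of Isserlis' theorem
\[
\mathbb{E}[\omega_{\boldsymbol{n}_{C\cup\{m\}}}]=\sum_{i\in C}\mathbb{E}[\omega_{n_i}\omega_{n_m}]\,\mathbb{E}[\omega_{\boldsymbol{n}_{C\setminus\{i\}}}]
\]
(with $C=\llbracket m\rrbracket\setminus(B'\cup\{m\})$) will allow us to recognize the total coefficient of $\omega^\diamond_{\boldsymbol{n}_{B'}}$ as exactly $\mathbb{E}[\omega_{\boldsymbol{n}_{\llbracket m\rrbracket\setminus B'}}]$, thus closing the induction.

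The main---and essentially only---obstacle will be the combinatorial bookkeeping in this regrouping step: one must verify that, in the case $m\notin B'$, the sum over possible ``partners'' $n_i$ of $n_m$ produced by the recursion exactly matches the recursive definition of a pair partition of $\llbracket m\rrbracket\setminus B'$ (that any such partition is uniquely determined by the partner of $n_m$ together with a pair partition of the remaining indices). No analytical difficulty arises: the identity is a purely algebraic fact about Wick products and Gaussian moments.
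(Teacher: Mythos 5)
Your proposal is correct. The paper states this lemma in the appendix without proof (it is a standard fact about Gaussian Wick products), so there is no argument of the authors to compare against; your induction — base case $m=1$, the recursion $\omega^\diamond_{\boldsymbol{n}_B}\,\omega_{n_m}=\omega^\diamond_{\boldsymbol{n}_{B\cup\{m\}}}+\sum_{i\in B}\mathbb{E}[\omega_{n_i}\omega_{n_m}]\,\omega^\diamond_{\boldsymbol{n}_{B\setminus\{i\}}}$ coming from the chaos multiplication formula, and the regrouping of the coefficient of each $\omega^\diamond_{\boldsymbol{n}_{B'}}$ via the recursive Isserlis identity — is a complete and standard route. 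The bookkeeping you flag does close as described: for $m\in B'$ the coefficient comes from a single term, and for $m\notin B'$ summing over the partner of $n_m$ reproduces $\mathbb{E}[\omega_{\boldsymbol{n}_{\llbracket m\rrbracket\setminus B'}}]$ exactly, with the convention $\mathbb{E}[\omega_{\boldsymbol{n}_\emptyset}]=1$ handling the extreme case.
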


\begin{lemma}\label{lem:Wick}
Let $(\omega_n)_{n\in\bN}$ be Gaussian. Then, for $\boldsymbol{n}=(n_1, \dots, n_p)$ and $\boldsymbol{m}=(m_1, \dots, m_q)$,
\vspace{-0.1cm}
\begin{equation*}
\E[\omega_{\boldsymbol{n}}^{\diamond}\omega_{\boldsymbol{m}}^{\diamond} ] =\begin{cases}
    \displaystyle0 & \text{ if } p\neq q,\\[10pt]
    \displaystyle\sum_{\pi} \prod_{\{i,j\}\in \pi } \E[\omega_i\omega_j], & \text{ if } p=q,
\end{cases}
\end{equation*}

\vspace{-0.15cm}
\noindent where $\pi$ is a pairwise partition of the set $\{n_1,\dots, n_p, m_1, \dots, m_p\}$ such that for each $\{i, j\}\in \pi$, $i\in \{n_1,\dots, n_p\}$ and $j\in \{m_1, \dots, m_p\}$.
\end{lemma}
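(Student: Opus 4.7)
The plan is to reduce the identity to the orthogonality and isometry of multiple Wiener integrals already recalled in Section~\ref{sec:Gaussian}. As a first step, I would realize the discrete Gaussian family $(\omega_n)_{n\in\bN}$ inside some isonormal Gaussian process $\{W(e):e\in\mathcal{K}\}$ by choosing vectors $e_n\in\mathcal{K}$ with $\langle e_n,e_{n'}\rangle_{\mathcal{K}}=\gamma(n-n')$ and setting $\omega_n=W(e_n)$. Iterating the defining relation \eqref{e:wick} of the Wick product then identifies
\begin{equation*}
\omega_{\boldsymbol{n}}^{\diamond}=\I_p(e_{n_1}\otimes\cdots\otimes e_{n_p}), \qquad \omega_{\boldsymbol{m}}^{\diamond}=\I_q(e_{m_1}\otimes\cdots\otimes e_{m_q}).
\end{equation*}

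With this representation in hand, the case $p\neq q$ is immediate from the orthogonality $\E[\I_p(f)\I_q(g)]=0$ whenever $p\neq q$. For $p=q$, I would apply the isometry $\E[\I_p(f)\I_p(g)]=p!\,\langle \hat f,\hat g\rangle_{\mathcal{K}^{\otimes p}}$ to $f=e_{n_1}\otimes\cdots\otimes e_{n_p}$ and $g=e_{m_1}\otimes\cdots\otimes e_{m_p}$. Expanding the symmetrizations as $\hat f=\frac{1}{p!}\sum_{\sigma\in\mathcal{P}_p}e_{n_{\sigma(1)}}\otimes\cdots\otimes e_{n_{\sigma(p)}}$ (and similarly for $\hat g$ with a permutation $\tau$), distributing the tensor inner product factor-by-factor, and absorbing one of the two permutation sums via the change of variable $\rho=\sigma^{-1}\tau$ (which cancels one factor of $p!$), I obtain
\begin{equation*}
\E[\omega_{\boldsymbol{n}}^{\diamond}\,\omega_{\boldsymbol{m}}^{\diamond}]=\sum_{\rho\in\mathcal{P}_p}\prod_{i=1}^{p}\langle e_{n_i},e_{m_{\rho(i)}}\rangle_{\mathcal{K}}=\sum_{\rho\in\mathcal{P}_p}\prod_{i=1}^{p}\E[\omega_{n_i}\omega_{m_{\rho(i)}}].
\end{equation*}
To rewrite this in the announced form, I would invoke the bijection $\rho\mapsto\pi:=\{\{n_i,m_{\rho(i)}\}:1\le i\le p\}$ between $\mathcal{P}_p$ and the collection of pair partitions of $\{n_1,\dots,n_p,m_1,\dots,m_p\}$ satisfying the cross-matching constraint in the statement.

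No serious obstacle is anticipated: the argument is a direct consequence of the Gaussian analysis machinery collected in Section~\ref{sec:Gaussian}, and the only care needed is the combinatorial bookkeeping that collapses the double sum over $(\sigma,\tau)\in\mathcal{P}_p\times\mathcal{P}_p$ into a single permutation sum. A shorter alternative route, avoiding the explicit isonormal embedding, would proceed by induction on $p+q$ using the elementary recursion $F\diamond\omega_n=F\omega_n-\sum_j\E[\omega_{i_j}\omega_n]\,F_{\setminus j}^{\diamond}$ together with $\E[\omega_{i_1}\diamond\cdots\diamond\omega_{i_k}]=0$ for $k\ge 1$; but the Wiener-chaos route is preferable since it directly reuses notation and facts already in place in the paper.
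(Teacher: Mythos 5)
Your proof is correct. The paper itself states Lemma \ref{lem:Wick} in the appendix without proof (it is the standard diagram/Isserlis formula for Wick products of jointly Gaussian variables), so there is no argument to compare against; your route --- realizing $\omega_n=W(e_n)$ in an isonormal process over a Hilbert space $\mathcal{K}$ with $\langle e_n,e_{n'}\rangle_{\mathcal{K}}=\gamma(n-n')$, identifying $\omega_{\boldsymbol{n}}^{\diamond}=\I_p(e_{n_1}\otimes\cdots\otimes e_{n_p})$ via \eqref{e:wick}, and then invoking orthogonality of chaoses and the isometry $\E[\I_p(f)\I_p(g)]=p!\langle\hat f,\hat g\rangle_{\mathcal{K}^{\otimes p}}$ --- is the standard one and is exactly what the paper implicitly relies on, e.g.\ in the proof of Lemma \ref{lem:isometry}. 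The combinatorial collapse of the double permutation sum to a single sum over $\rho$ is carried out correctly (whether one writes $\rho=\sigma^{-1}\tau$ or $\tau\sigma^{-1}$ is immaterial since one sums over all of $\mathcal{P}_p$), and the bijection between permutations and cross-matching pair partitions gives the stated form.
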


The following lemma about the commutative diagram of weak convergence can be found in \cite[Theorem 3.2, Chapter 1]{bill99}.
\begin{lemma}\label{lem:weak-con}
\vspace{-0.1cm}
Consider random variables $Y_n^{(N)}, Y^{(N)}, Y_n$ and $Y$ such that $Y^{(N)}_n$ converges weakly to $Y_n$ as $N\to \infty$, $Y_n$ converges weakly to $Y$ as $n\to\infty$, and $Y_n^{(N)}$ converges in probability  to $Y^{(N)}$ uniformly in $N$ as $n\to\infty$, then we have $Y^{(N)}$ converges  weakly to $Y$ as $N\to\infty$. 
\end{lemma}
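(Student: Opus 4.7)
The plan is to use the bounded Lipschitz metric $d_{BL}$ (which metrizes weak convergence on $\bR$) and a standard triangle-inequality argument interpolating through the ``diagonal'' quantities $Y_{n}^{(N)}$ and $Y_n$. Equivalently, one can argue directly via test functions: to prove $Y^{(N)}\wc Y$, it suffices to show that $\E[f(Y^{(N)})]\to\E[f(Y)]$ for every bounded Lipschitz $f:\bR\to\bR$, and to do so we decompose
\begin{equation*}
\big|\E[f(Y^{(N)})]-\E[f(Y)]\big|\le \big|\E[f(Y^{(N)})]-\E[f(Y_n^{(N)})]\big|+\big|\E[f(Y_n^{(N)})]-\E[f(Y_n)]\big|+\big|\E[f(Y_n)]-\E[f(Y)]\big|.
\end{equation*}

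The first step is to handle the tail term uniformly in $N$. Writing $L$ for the Lipschitz constant of $f$, for every $\delta>0$
\begin{equation*}
\big|\E[f(Y^{(N)})]-\E[f(Y_n^{(N)})]\big|\le L\delta+2\|f\|_\infty\,\bP\big(|Y^{(N)}-Y_n^{(N)}|>\delta\big).
\end{equation*}
The uniform-in-$N$ convergence in probability hypothesis then lets me fix $\delta$ first and choose $n$ large so that $\sup_N \bP(|Y^{(N)}-Y_n^{(N)}|>\delta)$ is as small as desired; this forces the first term on the right to be at most $\varepsilon/3$ uniformly in $N$ once $n$ is large enough. The third term is handled directly by the hypothesis $Y_n\wc Y$, giving $|\E[f(Y_n)]-\E[f(Y)]|<\varepsilon/3$ for all $n$ large enough.

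After these two choices, I fix such an $n$ and use the remaining hypothesis $Y_n^{(N)}\wc Y_n$ as $N\to\infty$ to make the middle term $|\E[f(Y_n^{(N)})]-\E[f(Y_n)]|<\varepsilon/3$ for all sufficiently large $N$. Combining the three bounds gives $|\E[f(Y^{(N)})]-\E[f(Y)]|<\varepsilon$ for $N$ large, hence $\E[f(Y^{(N)})]\to\E[f(Y)]$. Since $f$ was an arbitrary bounded Lipschitz function, $Y^{(N)}\wc Y$.

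The only mildly delicate point is the order in which the quantifiers are taken: the middle term requires $n$ to be frozen \emph{before} $N$ is sent to infinity, whereas the first term requires a bound that holds simultaneously for all $N$. The uniform-in-$N$ hypothesis is precisely what makes this $n$-then-$N$ interchange legitimate, so there is no substantive obstacle once the decomposition is set up. The argument is entirely standard and contains no probabilistic subtleties beyond this.
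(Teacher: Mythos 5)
Your argument is correct and complete: the three-term decomposition via bounded Lipschitz test functions, the uniform-in-$N$ bound $L\delta+2\|f\|_\infty\sup_N\bP(|Y^{(N)}-Y_n^{(N)}|>\delta)$ for the tail term, and the careful $n$-before-$N$ quantifier order are exactly what is needed. The paper states this lemma without proof (it is the standard triangular-array weak convergence criterion, essentially Theorem 3.2 in Billingsley's \emph{Convergence of Probability Measures}), and your write-up is the standard proof of that result, so there is nothing to compare against and no gap to report.
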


The following inequality is borrowed from  \cite{mmv01}.
\vspace{-0.1cm}
\begin{lemma}\label{lem:1/H norm}
For $H\in(\frac{1}{2},1]$, there exists a constant $C_H$ depending only on $H$, such that,
\vspace{-0.1cm}
$$\int_{\bR^m}\int_{\bR^m}f(\boldsymbol{t})f(\boldsymbol{s})\prod_{i=1}^m|t_i-s_i|^{2H-2}\dd\boldsymbol{t}\dd\boldsymbol{s}\leq C_H^m \|f\|_{L^{1/H}(\bR^m)}^2, $$

\vspace{-0.5cm}    
\end{lemma}
The following result can be obtained by a direct computation. 
\begin{lemma}\label{lem:a direct calculate}
Suppose $\al_i>-1$ for $i=1,\cdots,m$. Denoting $\al:=\sum_{i=1}^m \al_i$, then
\vspace{-0.1cm}
$$\int_{0=r_0<r_1<r_2<\cdots<r_m<t}\prod_{i=1}^m(r_i-r_{i-1})^{\al_i}\dd\boldsymbol{r}=\frac{\prod_{i=1}^m\Gamma(1+\al_i)}{\Gamma(m+\al+1)}t^{m+\al},  $$

\vspace{-0.15cm}
\noindent where $\Gamma(x)=\int_0^\infty t^{x-1} e^{-t}\dd t$ is the Gamma function.
    
\end{lemma}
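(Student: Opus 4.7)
The plan is to prove the identity by reducing the integral to the standard Dirichlet integral over the open simplex, and then evaluating that by induction.

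First, I would perform the linear substitution $u_i := r_i - r_{i-1}$ for $i = 1,\ldots,m$. This change of variables has Jacobian equal to $1$, and it sends the ordered region $\{0 = r_0 < r_1 < \cdots < r_m < t\}$ bijectively onto the simplex $\{u_i > 0 : u_1 + \cdots + u_m < t\}$. The integrand transforms in the obvious way, so the left-hand side equals
$$\int_{\substack{u_i > 0 \\ u_1 + \cdots + u_m < t}} \prod_{i=1}^m u_i^{\alpha_i}\,du_1 \cdots du_m.$$
Next I would rescale by $t$, setting $v_i := u_i/t$. This produces a factor $t^m$ from the Jacobian and a factor $t^{\alpha}$ from the integrand (since $\alpha = \sum_i \alpha_i$), pulling $t^{m+\alpha}$ outside the integral and leaving
$$t^{m+\alpha} \int_{\substack{v_i > 0 \\ v_1 + \cdots + v_m < 1}} \prod_{i=1}^m v_i^{\alpha_i}\,dv_1\cdots dv_m.$$

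It then remains to identify the Dirichlet integral on the right with $\prod_{i=1}^m \Gamma(1+\alpha_i)/\Gamma(m+\alpha+1)$. I would do this by induction on $m$. The base case $m=1$ is just $\int_0^1 v^{\alpha_1}\,dv = 1/(1+\alpha_1) = \Gamma(1+\alpha_1)/\Gamma(2+\alpha_1)$. For the inductive step, I would integrate out $v_m$ first over $(0, 1 - v_1 - \cdots - v_{m-1})$, which by the change of variable $v_m = (1 - v_1 - \cdots - v_{m-1}) w$ yields a factor $(1 - v_1 - \cdots - v_{m-1})^{1+\alpha_m} B(1+\alpha_m,1)$ — or more usefully, after combining with the remaining inductive integral and using the Beta identity $B(a,b) = \Gamma(a)\Gamma(b)/\Gamma(a+b)$, the telescoping of Gamma factors produces $\Gamma(m+\alpha+1)$ in the denominator.

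There is no genuine obstacle: the conditions $\alpha_i > -1$ are precisely what guarantee integrability at the faces of the simplex, and every manipulation is a classical linear change of variables or an application of the Beta function. The whole argument is essentially a textbook computation, and the role of the lemma in the paper is simply to provide the precise constant for the Gamma-function bounds used in Lemma~\ref{lem:norm_degenerate_trace} and Lemma~\ref{lem:Tr^j}.
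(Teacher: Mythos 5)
Your proof is correct, and it is precisely the ``direct computation'' the paper alludes to without writing out: the paper offers no proof of this lemma beyond that remark. Reducing to the Dirichlet integral over the simplex via $u_i=r_i-r_{i-1}$, rescaling by $t$, and evaluating by the Beta-function induction is the standard argument and gives exactly the stated constant.
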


\begin{lemma}\label{lem:series-mk}
Let $a, b$  and $c$ be positive constants. Assuming $c>a$, then we have for all $A >0$,
\vspace{-0.1cm}
\[\sum_{k+m=1}^\infty  A^{m+k} \frac{m^{a k}}{m^{bm} k^{c k}}<\infty.\]

\vspace{-0.5cm}
\end{lemma}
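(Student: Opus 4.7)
The factor $m^{ak}$ couples the two indices so that the summand does not factor into a function of $m$ times a function of $k$. The strategy is to partition $\{(m,k)\in\bN_{0}^{2}:m+k\ge 1\}$ into regions on each of which a pointwise bound on the summand factors into a product of two one-dimensional super-exponentially decaying series. The boundary cases are disposed of at once: terms with $m=0$ and $k\ge 1$ vanish (since $0^{ak}=0$), while the terms with $k=0$ and $m\ge 1$ reduce to $\sum_{m\ge 1}A^{m}/m^{bm}$, which converges because $b>0$.

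For the main double sum with $m,k\ge 1$, I split into three regions. In Region~I, where $m\le k$, I bound $m^{ak}\le k^{ak}$ to obtain
\[
A^{m+k}\,\frac{m^{ak}}{m^{bm}k^{ck}}\;\le\;\frac{A^{m}}{m^{bm}}\cdot\frac{A^{k}}{k^{(c-a)k}},
\]
and the double sum factors into two convergent one-dimensional series (this is precisely where the hypothesis $c>a$ is used; $b>0$ handles the other factor). In Region~II, where $m>k$ and $ak\le bm/2$, the inequality $ak-bm\le -bm/2$ gives $m^{ak}/m^{bm}\le m^{-bm/2}$, leading to the factorized bound $\frac{A^{m}}{m^{bm/2}}\cdot\frac{A^{k}}{k^{ck}}$, again summable.

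The main obstacle lies in Region~III: $m>k$ and $ak>bm/2$. Here neither crude bound $m^{ak}\le m^{am}$ (valid since $k<m$) nor $m^{ak}\le k^{ak}$ yields a factorizing estimate — the former would require the unproven $b>a$. The saving observation is that the constraint $ak>bm/2$ forces $m<2ak/b$, so $m$ is at most a constant multiple of $k$. Consequently $m^{ak}\le(2ak/b)^{ak}\le C^{k}k^{ak}$ for some constant $C=C_{a,b}$, and since $m^{bm}\ge 1$ for $m\ge 1$, the summand is bounded by $A^{m+k}C^{k}/k^{(c-a)k}$. For each fixed $k$ there are at most $O(k)$ admissible values of $m$, and $A^{m}$ on this range is bounded by $\max(1,A^{2ak/b})\le D^{k}$ for a suitable $D=D(A,a,b)$. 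The contribution of Region~III is therefore dominated by $\sum_{k\ge 1}k\cdot E^{k}/k^{(c-a)k}$, which converges thanks to the super-exponential decay coming from $c>a$. Summing the three bounds completes the proof.
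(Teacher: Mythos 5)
Your proof is correct, but it proceeds differently from the paper's. The paper first peels off the ranges $k<k_0$ and $m<m_0$ (where convergence is immediate), and on $\{m\ge m_0,\,k\ge k_0\}$ it invokes the elementary rearrangement inequality $(bm)^{ck}(ck)^{bm}\le (bm)^{bm}(ck)^{ck}$ to swap the exponents, turning the summand into $\bigl((C_A m)^{(c-a)k}(C'_A k)^{bm}\bigr)^{-1}$, which is then dominated by a product of two geometric series once $m_0,k_0$ are chosen so that $C_A m_0>1$ and $C'_A k_0>1$. You instead partition $\{m,k\ge 1\}$ by the relative sizes of $m$ versus $k$ and of $ak$ versus $bm$: on $m\le k$ you absorb $m^{ak}$ into $k^{ak}$ and use $c>a$; on $m>k$ with $ak\le bm/2$ you absorb it into $m^{bm}$; and in the remaining critical region you exploit the constraint $m<2ak/b$ to reduce everything to a single sum over $k$ with super-exponential decay, counting the $O(k)$ admissible values of $m$. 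Both arguments use the hypotheses $c>a$ and $b>0$ in essentially the same places; the paper's route is shorter once one spots the exponent-swapping inequality, while yours is more elementary and entirely self-contained (it also explicitly disposes of the boundary terms $m=0$ or $k=0$, which the paper leaves implicit). No gaps.
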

\begin{proof}
For some large $k_0$ and $m_0$ which will be determined later, we write
\vspace{-0.1cm}
\begin{equation*}
\sum_{k+m=1}^\infty A^{m+k} \frac{m^{a k}}{m^{bm} k^{c k}}\leq\Big(\sum_{k<k_0}\sum_{m\geq 1}+\sum_{m<m_0}\sum_{k\geq 1}+\sum_{m\geq m_0}\sum_{k\geq k_0}\Big) A^{m+k} \frac{m^{a k}}{m^{bm} k^{c k}}.
\end{equation*}

\vspace{-0.15cm}
\noindent It is not hard to see that the first two sums are finite. For the third term, note that $(bm)^{ck}(ck)^{bm}\leq(bm)^{bm}(ck)^{ck}$. Let $d=c-a$ and we have that
\vspace{-0.1cm}
\begin{equation*}
\begin{aligned}
&\sum_{m\geq m_0}\sum_{k\geq k_0} A^{m+k} \frac{m^{a k}}{m^{bm} k^{c k}}\leq \sum_{m\geq m_0}\sum_{k\geq k_0} A^{m+k} \frac{m^{ak}}{m^{ck} k^{bm}}\Big(\frac{b}{c}\Big)^{bm}\Big(\frac{c}{b}\Big)^{ck}\\
=&\sum_{m\geq m_0}\sum_{k\geq k_0}\frac{1}{(C_A m)^{dk}(C'_A k)^{bm}}\leq\sum_{m\geq m_0}\frac{1}{(C'_A k_0)^{bm}}\sum_{k\geq k_0}\frac{1}{(C_A m_0)^{dk}}<\infty,
\end{aligned}
\end{equation*}

\vspace{-0.15cm}
\noindent by choosing $C_A'k_0>1$ and $C_A m_0>1$. The proof is now completed.   
\end{proof}


\begin{lemma}\label{lem:keylem} Let $X$ be an exponentially integrable random variable on $(\Omega, \mathcal F, \bP)$ and $A\in \mathcal F$. Then
\vspace{-0.1cm}
\[\bE\left[ e^X -e^{X\mathbf 1_A} \right]= \bP(
A^c) \Big(\bE\Big[e^{X}\Big| A^c\Big]-1\Big)=\E\left[e^X\mathbf 1_{A^c}\right]-\bP(A^c).\]

\vspace{-0.5cm}
\end{lemma}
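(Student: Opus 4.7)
The key observation is to split $\Omega$ according to the indicator $\mathbf 1_A$, on which $X\mathbf 1_A$ takes two distinct forms. I plan to decompose the expectation as
\[
\bE\bigl[e^X - e^{X\mathbf 1_A}\bigr] = \bE\bigl[(e^X - e^{X\mathbf 1_A})\mathbf 1_A\bigr] + \bE\bigl[(e^X - e^{X\mathbf 1_A})\mathbf 1_{A^c}\bigr]
\]
and evaluate the two pieces separately. On the event $A$ we have $X\mathbf 1_A = X$, so the integrand is identically zero and the first term vanishes. On the event $A^c$ we have $X\mathbf 1_A = 0$, so $e^{X\mathbf 1_A} = 1$, and the second term reduces to $\bE[(e^X-1)\mathbf 1_{A^c}] = \bE[e^X\mathbf 1_{A^c}] - \bP(A^c)$, which is the third expression in the stated identity.

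For the middle expression, I would simply rewrite $\bE[e^X\mathbf 1_{A^c}]$ as $\bP(A^c)\bE[e^X\mid A^c]$ by the definition of conditional expectation (valid since $\bP(A^c)>0$ is implicit; if $\bP(A^c)=0$, both sides of the asserted identity vanish trivially). Then
\[
\bE[e^X\mathbf 1_{A^c}] - \bP(A^c) = \bP(A^c)\bigl(\bE[e^X\mid A^c] - 1\bigr),
\]
as required. Exponential integrability of $X$ only serves to ensure that all quantities appearing in the argument are finite, so that the splitting and rearrangement are legitimate; there is no obstacle to speak of. The entire proof is one display and a one-line remark on the conditional-expectation identity.
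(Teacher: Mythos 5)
Your proof is correct and follows essentially the same route as the paper's: both split the expectation over $A$ and $A^c$, note the integrand vanishes on $A$ and equals $e^X-1$ on $A^c$, and then identify $\bE[e^X\mathbf 1_{A^c}]=\bP(A^c)\bE[e^X\mid A^c]$. Your explicit handling of the degenerate case $\bP(A^c)=0$ is a small but welcome addition.
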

 \begin{proof}
 In general, we have
 \vspace{-0.1cm}
 \begin{align*}
\bE\left[ f(X) -f(X\mathbf 1_A)\right]&=\bP(A) \E[f(X)-f(X\mathbf 1_A)|A]+ \bP(A^c) \E [f(X) -f(X\mathbf 1_A)|A^c]\\
& = \bP(A)\cdot 0+\bP(A^c) \E [f(X) -f(0)|A^c]= \E[f(X) \mathbf 1_{A^c}] -f(0)\bP(A^c),
\end{align*}

\vspace{-0.15cm}
\noindent which yields the desired result by choosing $f(x) = e^x$.
 \end{proof}

\begin{lemma}\label{lem:UI}
Consider a family $\{Y_n, n\in \mathbb N\}$ of non-negative and integrable random variables such that $\sup_{n}\E|Y_n-Y_n^\e|<\e$, where  $\{Y_n^\e, n\in \mathbb N, \e \in(0,1)\}$  satisfies $\sup_{n}\E[|Y_n^\e|^2]=M_\e<\infty$. Then, $\{Y_n, n\in \mathbb N\}$ is uniformly integrable.
\end{lemma}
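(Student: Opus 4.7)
The plan is to prove uniform integrability directly from the definition: for every $\eta > 0$, produce a threshold $K$ (independent of $n$) such that $\sup_n \E[Y_n \mathbf 1_{\{Y_n > K\}}] < \eta$. The two-parameter family $\{Y_n^\e\}$ serves as an $L^2$-approximation of $\{Y_n\}$ in $L^1$, and the idea is to absorb the ``bad mass'' of $Y_n$ using this approximation.

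First, given $\eta > 0$, I would fix $\e \in (0,1)$ small enough so that $\e < \eta/2$, and then use the bound $\sup_n \E|Y_n^\e| \le \sqrt{M_\e}$ (Cauchy--Schwarz) to see that
\[
\sup_n \E[Y_n] \le \sup_n \E|Y_n - Y_n^\e| + \sup_n \E|Y_n^\e| \le \e + \sqrt{M_\e}.
\]
This gives a uniform $L^1$-bound on $Y_n$, and hence by Markov's inequality a uniform tail bound $\bP(Y_n > K) \le (\e + \sqrt{M_\e})/K$ for all $n$.

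Next, for any $K > 0$, I would use non-negativity and $Y_n \le |Y_n - Y_n^\e| + |Y_n^\e|$ to split
\[
\E[Y_n \mathbf 1_{\{Y_n > K\}}] \;\le\; \E|Y_n - Y_n^\e| + \E\bigl[|Y_n^\e|\,\mathbf 1_{\{Y_n > K\}}\bigr] \;<\; \e + \sqrt{M_\e}\,\sqrt{\bP(Y_n > K)},
\]
where the last term uses Cauchy--Schwarz together with the assumption $\sup_n \E[(Y_n^\e)^2] \le M_\e$. Combining with the tail bound from the previous paragraph, the right-hand side is at most $\e + \sqrt{M_\e(\e + \sqrt{M_\e})/K}$, uniformly in $n$.

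Finally, with $\e$ already fixed so that $\e < \eta/2$, the quantity $M_\e$ is a fixed finite number, so I can choose $K$ large enough that $\sqrt{M_\e(\e + \sqrt{M_\e})/K} < \eta/2$. This yields $\sup_n \E[Y_n \mathbf 1_{\{Y_n > K\}}] < \eta$, which is precisely the uniform integrability criterion. There is no genuine obstacle here; the proof is essentially a two-line Cauchy--Schwarz plus Markov argument, and the only point to watch is that the threshold $K$ is allowed to depend on $\e$ (hence on $\eta$) but not on $n$, which is exactly the structure the assumption provides.
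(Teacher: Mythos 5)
Your proof is correct and uses essentially the same ingredients as the paper's (the $L^1$-closeness of $Y_n^\e$, Cauchy--Schwarz against the uniform $L^2$ bound $M_\e$, and Markov's inequality for the tail probability); the only cosmetic difference is that you bound $\E\bigl[|Y_n^\e|\mathbf 1_{\{Y_n>K\}}\bigr]$ directly, whereas the paper inserts the intermediate term $Y_n^\e\mathbf 1_{\{Y_n^\e>K\}}$ and handles two tail events. No gaps.
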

\begin{proof}
By the triangular inequality and the  Cauchy-Schwarz inequality, we get
\vspace{-0.1cm}
\begin{align*}
    \E |Y_n\mathbf 1_{[Y_n>k]}|&\le \E |Y_n\mathbf 1_{[Y_n>k]}-Y^\e_n\mathbf 1_{[Y_n>k]} |+\E |Y^\e_n\mathbf 1_{[Y_n>k]}-Y^\e_n\mathbf 1_{[Y^\e_n>k]} |+\E |Y^\e_n\mathbf 1_{[Y^\e_n>k]}|\\
    &\le \E|Y_n-Y_n^\e|+ \big(\E|Y_n^\e|^2\big)^{\frac12}\big[\bP(Y_n>k)^{\frac12}+\bP(Y_n^\e>k)^{\frac12}\big]+\E |Y^\e_n\mathbf 1_{[Y^\e_n>k]}|\\
    &\le \e + \sqrt{M_\e}\big[\bP(Y_n>k)^{\frac12}+\bP(Y_n^\e>k)^{\frac12}\big] +\E |Y^\e_n\mathbf 1_{[Y^\e_n>k]}|.
\end{align*}

\vspace{-0.15cm}
\noindent For any fixed $\e\in(0,1)$, we may find $K=K(\e)$ such that for all $k>K$ and $n\in\bN$, $\bP(Y_n>k)^{1/2}+\bP(Y_n^\e>k)^{1/2}<\e(M_\e)^{-1/2}$ and $\E |Y^\e_n\mathbf 1_{[Y^\e_n>k]}|<\e$ by Markov's inequality and the Cauchy-Schwarz inequality. Hence, $\sup_{k\geq K, {n}\in\bN}\E |Y_n\mathbf 1_{[Y_n>k]}|<3\e$. This proves the uniform integrability of $\{Y_n, n\in\bN\}$.
\end{proof}
\end{appendix}

\vspace{-0.5cm}
\section*{Acknowledgments}
We thank Quentin Berger, Rongfeng Sun, and Jinjiong Yu for valuable suggestions on an earlier version of the paper. We also thank Julien Poisat for helpful comments and references. We are further grateful to the referees for their suggestions and  comments, which have significantly improved the paper.

\vspace{-0.2cm}
\section*{funding}
J. Song is partially supported by NSFC (No. 12471142) and the Fundamental Research Funds for the Central Universities. R. Wei is supported by NSFC (No. 12401170) and Xi'an Jiaotong-Liverpool University Research Development Fund (No. RDF-23-01-024).

\bibliographystyle{plain}
 \bibliography{ref}

\end{document}